\title[Generalised nice sets]{Generalised nice sets}
\author[C.~Draper]{Cristina Draper${}^\star$}
\author[T.~Meyer]{Thomas L. Meyer, Juana S\'anchez-Ortega${}^\dagger$}
\subjclass[2010]{Primary  
51E20; 
Secondary 
05E20; 
17B25; 
17B70; 
05B30. 
}
\keywords{Graded contractions, Fano plane, nice sets, generalised nice sets, gradings on $\mathbb Z_2^3$, solvable algebras, nilpotent algebras, exceptional Lie algebras.}
\thanks{${}^\star$Universidad de M\'alaga, Spain, ORCID 0000-0002-2998-7473. Corresponding author: cdf@uma.es.}
\thanks{${}^\dagger$University of Cape Town, South Africa, ORCID 0000-0003-2444-8830.} 
\newcommand{\setbar}[2]{\{#1 \, \lvert \, #2\}} 
\newcommand{\bfemph}[1]{\emph{\textbf{#1}}}
\newcommand{\f}[1]{\mathfrak{#1}}
\newcommand{\bb}[1]{\mathbb{#1}}
\newcommand{\ep}{\varepsilon}
\DeclareMathOperator{\id}{id}
\DeclareMathOperator{\freq}{freq}
\theoremstyle{plain} 
\newtheorem{theorem}{Theorem}[section]
\newtheorem{lemma}[theorem]{Lemma}
\newtheorem{cor}[theorem]{Corollary}
\newtheorem{prop}[theorem]{Proposition}
\theoremstyle{definition} 
\newtheorem{define}[theorem]{Definition}
\newtheorem{remark}[theorem]{Remark}
\newcommand{\tagsLeft}{\tagsleft@true\let\veqno\@@leqno}
\newcommand{\tagsRight}{\tagsleft@false\let\veqno\@@eqno}
\begin{document}


\maketitle 


\begin{abstract}
A new combinatorial object, called generalised nice set, is classified up to collineations of the Fano plane. 
This classification is necessary to find the graded contractions of all the exceptional complex Lie algebras of dimension at least 52, endowed with   $\mathbb Z_2^3$-gradings coming from the octonions. Our classification is of purely combinatorial nature.
 \end{abstract}

\section{Introduction} 

This work is part of an ambitious research project (\cite{Paper1,Paper2,Paper4}) that seeks to find and classify the graded contractions of various 
 $\mathbb{Z}^3_2$-gradings on all the exceptional complex Lie algebras simultaneously. 
 A graded contraction of a $G$-graded Lie algebra $L$, for $G$ a finite abelian group,
 is another Lie algebra $L^\ep$ which can be obtained from the original algebra $L$ and 
 from  a particular map $\ep\colon G\times G\to \mathbb C$. To obtain the new Lie algebra entails modifying the bracket of homogeneous elements by means of  a constant given by the map $\ep$.
  Finding the graded contractions means finding the maps $\ep$  that turn $L^\ep$ into a Lie algebra,  
    which incidentally, is usually \lq\lq more abelian\rq\rq\, than the original one.
 The notion of graded contraction was introduced by physicists in the early  1990s \cite{CPSW,otro91} as a generalisation of Wigner-In\"on\"u contractions.
There is a plethora of different concepts regarding contractions, degenerations and deformations, which the physicists study in connection with  limit theories  (see \cite[Chapter~5]{libroLiefisica} on applications of the Lie theory to physics). 
From an algebraic point of view, the graded contractions can contribute significantly to the ongoing classification of solvable Lie algebras.  This contribution tends to come about by providing unknown and  unexpected    examples,  in dimensions which pose difficulties.
 
A pair of sufficient conditions for a map $\ep$ to produce a graded contraction are given by 
$\ep_{jk}=\ep_{kj}$ and $\ep_{jk}\ep_{l,j+k}=\ep_{jl}\ep_{k,j+l}=\ep_{kl}\ep_{j,k+l}$, if $G=\{g_0,\dots, g_n\}$ and $\ep_{jk}:=\ep(g_j,g_k)$. 
(These   are also necessary conditions if the grading under study is  sufficiently symmetrical, as the ones  in \cite{Paper4}.)
In particular, the set of pairs $\{\{j,k\}:\ep_{jk}\ne0\}$, called the \emph{support} of $\ep$, satisfies   a certain absorbing property. 
We call any set of pairs satisfying such a  property (for $G=\mathbb Z_2^3$) a \emph{generalised nice set} (see Definition~\ref{defGNS}).
The term comes from the fact that they are a natural generalisation of the \emph{nice sets} considered in \cite{Paper1}, which have been proved to coincide with the supports of the graded contractions of the $ \mathbb Z_2^3$-grading $\Gamma_{\f{g}_2}$ on the complex exceptional Lie algebra $\mathfrak{g}_2$, induced by the
natural  $\mathbb{Z}_2^3$-grading on the complex octonions   in \cite{gradsO}. The classification of these nice sets up to collineations   
was the cornerstone in \cite{Paper1}  for achieving a complete classification of graded contractions of $\Gamma_{\f{g}_2}$. 
Likewise, the classification of the generalised nice sets is the first achievement we must reach in order to find the graded contractions of the above mentioned gradings on the exceptional Lie algebras $\mathfrak{f}_4, \mathfrak{e}_6, \mathfrak{e}_7$ and $\mathfrak{e}_8$. 
As an immediate application, this will be a  contribution to the classification of solvable and nilpotent Lie algebras, since our catalogue  of 
generalised nice sets will give a large collection of completely unknown algebras, of large dimensions, with varying   properties. In fact, most of them are nilpotent or at least solvable, and the nilpotency and solvability indices can be calculated directly from the support. 

Previous results of this kind, which compute the graded contractions of a fixed $G$-graded Lie algebra $L$, appear in \cite{checos06, gr-cont} for $L$ of dimension $8$. Their calculations rely heavily on the use of a computer system. In \cite{Paper1} we explore the nice sets as a tool  to classify the $\mathbb{Z}^3_2$-graded contractions of the complex exceptional Lie algebra $\mathfrak{g}_2$, of dimension 14, without computer assistance. 
Our new combinatorial object, the generalised nice set,   allows us to obtain graded contractions for algebras of dimensions as large as $52$, 78, 133 and 248 also  without computer assistance! 
This is a challenging task, let us simply note that there are 24 nice sets up to collineations, while there are 245 not collinear generalised nice sets! (see \S\ref{conclusion}).
Although we will provide our classification of generalised nice sets without computer, 
we have added  \S\ref{computer}, 
where we use computer algorithms to provide an alternative proof of
 the exact equivalence classes in the classification of generalised nice sets. The assistance these algorithms offer lies 
mainly in the verification of many technical calculations involved in describing all the distinct equivalence classes. Once we narrow the possibilities for generalised 
nice sets from the $2^{36}$ initial possibilities to a few hundred candidates, the task of then identifying exactly the distinct equivalence classes is a 
subtle affair. Computer assistance eliminates the chance of small technical errors between the many equivalence classes. 

The paper is structured as follows: 
in \S\ref{sec3} we introduce the main object of this paper: the generalised nice sets, after recalling some preliminary background on nice sets and collineations. We prove 
some general properties, which allow us to split the combinatorial classification problem into four mutually exclusive  cases, which rely on the specific forms that generalised nice sets might take. 
We tackle these cases in the following four sections: 
\S\ref{gnsXSection} deals with generalised nice sets contained in $X$,
\S\ref{TintXempty} is devoted to generalised nice sets having  empty intersection with $X$,
in \S\ref{partI} we study generalised nice sets whose intersection with $X$ is not generalised  nice, and lastly,
generalised nice sets whose non-empty intersection with $X$ is  generalised nice are considered in \S\ref{partII}.
Our main results are Corollary~\ref{somegns}, Proposition~\ref{way}, Theorem~\ref{GnsListThm} and Theorem~\ref{todo}, which give exact criteria to   determine whether a set is generalised nice. The equivalence classes up to collineations are shown  in tables   scattered throughout the work. 
A short summary is provided in 
\S\ref{conclusion}. 
As a bonus, \S\ref{computer} 
provides computer algorithms   as an alternative method to find the exact equivalence classes in the classification of generalised nice sets. 
\smallskip

Our problem can also be considered a combinatorial problem on abelian groups (see  \cite{combi6,combi1} in relation to such issues).  Generalised nice sets are  in one-to-one correspondence with subsets of $G\times G$ satisfying  a certain absorbing property (for our abelian group $G=\mathbb Z_2^3$),  
where the collinearity corresponds to the passage through an automorphism of the group. (Recall that the simple group $\mathrm{Aut}(\mathbb Z_2^3)$ is isomorphic to the group of collineations of the Fano plane.) 

Our generalised nice sets can also be considered as sets of  points and  segments (portions of lines between two points) in an \emph{extended} Fano plane. We would like to emphasize that the Fano plane,  the unique projective plane of order 2,   repeatedly appears in Mathematics  (see a very nice review of its descriptions in \cite{combi4}). 
  Combinatorial and geometrical constructions may lay dormant for some time before revealing their usefulness, but frequently do so eventually. The literature is full of examples (see, for instance, textbooks as \cite{librocombi} and references therein). 
Particularly, geometry concerning points and lines. Open problems and conjectures about many objects in finite projective spaces appear in \cite{combi5}. The work \cite{combi3} deserves special mention. It relates classical configurations of lines to some exceptional complex Lie algebras of series E. In fact, it refers to some of the gradings we are going to apply our results,   which can be considered as octonionic-gradings. 
 
	
\section{Generalised nice sets}  \label{sec3}

\subsection{Preliminaries on collineations} \label{col}

 Let $I: = \{1,\ldots, 7\}$, $I_0 := I \cup \{0\}$.   Consider the following pictorial summary of the Fano plane with its 7 lines, each consisting of 3 points in $I$:
\vspace{-2pt}\begin{center}
\begin{tikzpicture}[scale=0.5, every node/.style={transform shape}] \label{fano}
\draw 
(30:1) node[circle, draw, fill=white]{2}  -- (210:2) node[circle, draw, fill=white]{7}
(150:1) node[circle, draw, fill=white]{4} -- (330:2) node[circle, draw, fill=white]{5}
(270:1) node[circle, draw, fill=white]{6} -- (90:2) node[circle, draw, fill=white]{1}
(90:2)  -- (210:2) -- (330:2) -- cycle
(0:0) node[circle, draw, fill=white]{3}   circle (1);
\end{tikzpicture}
\end{center}
That is, ${\bf L} = \{\{1, 2, 5\}, \, \{5, 6, 7\},\, \{7, 4, 1\}, \, \{1, 3, 6\}, \, \{6, 4, 2\},\, \{2, 7, 3\}, \, \{3, 4, 5\}\}$.   Note that the lines are not ordered and that any two 
different lines intersect at exactly one point.
 For $i, j \in I $, $i\ne j$,  we let $i \ast j$ denote the unique element in $I -\{i,j\}$ which lies on the same line as
$i$ and $j,$ in the above picture. Furthermore, we denote by $\ell_{ij}$ the line containing both $i$ and $j$, i.e. $\ell_{ij}=\{i,j,i\ast j \}\in {\bf L}$.
We can extend $\ast$ to an operation on $I_0$ by setting  $0\ast i=i\ast 0=i$, $0\ast 0=0$, and $i\ast i=0$, for any $i\in I$.

A collineation (also called an automorphism or symmetry) of the Fano plane is a permutation of $I$ which preserves collinearity. That is, it must carry collinear   points (points on the same line) to collinear points. In other words, a bijective map $\sigma\colon I \to I$ is called a \emph{collineation} if $\sigma(i*j) = \sigma(i)*\sigma(j)$, for all $i, j \in I$, $i\ne j$.
We write $S_\ast (I)$ for the group consisting of all the collineations of the Fano plane.
We can extend this definition, to maps defined on $I_0,$ as follows: $S_\ast (I_0)=\{\sigma\colon I_0 \to I_0\textrm{ bijective }\vert \ \sigma(i*j) = \sigma(i)*\sigma(j), \ \forall i, j \in I_0\}$.
Any $\sigma\in S_\ast (I_0)$  will map 0 onto 0,  and so the map $S_\ast (I_0)\to S_\ast (I)$ given by restricting $\sigma\mapsto\sigma\vert_I$ is a group isomorphism. This enables us to frequently abuse the notation by referring to the elements in $S_\ast (I_0)$ as collineations.

As in \cite[Definitions 3.9 and 3.12]{Paper1}, we call pairwise distinct elements $i, j, k \in I$ \emph{generative}, if $k\neq i\ast j$. 
We may also refer to the whole triplet $\{i, j, k\}$ as \emph{generative} because the definition is independent of the order of the elements $i, j,$ and $k$.  
Collineations preserve generative triplets: if $i, j, k \in I$ are generative then $\sigma(i), \sigma(j), \sigma(k)$ are generative, for all $\sigma \in S_\ast(I)$. Furthermore, any two generative triplets $\{i, j, k\}$ and $\{i', j', k'\}$ yield a unique $\sigma\in S_\ast(I)$ such that $\sigma(i) = i'$, $\sigma(j) = j'$, $\sigma(k) = k'$. 
In particular,  if $i, j, k \in I$ are generative, setting $\sigma(1)=i$, $\sigma(2)=j,$ and $\sigma(3)=k$ uniquely determines $\sigma\in S_\ast(I)$ since the remaining values of 
$\sigma$ are forced by collineations' preservation of $\ast$. This unique collineation will be denoted by $\sigma_{ijk}$. In this way, the whole group $  S_\ast(I)=\{\sigma_{ijk}:i,j,k\in I \,\textrm{generative} \}$ has $7\cdot 6\cdot 4=168$ elements;
 we can choose $i\in I$ arbitrarily, $j\ne i$, and $k\ne i,j,i*j$. This group is well-known to be $\mathrm{PGL}(3,2)$ \cite[p.~131]{libroFano}.

 
\subsection{Preliminaries on nice sets}

The work \cite{Paper1} classified admissible graded contractions of a fine $\mathbb Z_2^3$-grading, on the complex simple Lie algebra $\mathfrak{g}_2$, up to certain equivalence relations. 
 To solve this problem, the authors adopted a combinatorial approach
involving finding the so-called \emph{nice sets}. These are the subsets of the set of edges of the Fano plane, $X := \{ \{i, j\} \mid i, j \in I, \, i \neq j \},$ which satisfy a certain absorbing property.

\begin{define} \cite[Definition 3.9]{Paper1}
A subset $T$ of $X$ is called {\bf nice}  if   the presence of $\{i, j\}, \{i\ast j, k\} \in T$, for some generative $i, j, k \in I$, implies  that the set 
\begin{equation}\label{eq_defP}
P_{\{i,j,k\}} := \{\{i, j\}, \{j,k\},  \{k,i\}, \{i, j\ast k\}, \{j, k\ast i\}, \{k, i\ast j\}\}
\end{equation}
 is fully contained in $T$.  
\end{define}

 Notice that $P_{\{i,j,k\}}$ (for generative $i,j,k\in I$) has cardinal 6, is itself  a nice set, and is independent of the order chosen of the indices $i$,
 $j$ and $k$.
For any line $\ell\in\mathbf{L}$ and for any $i\in I$, some more examples of nice sets are:
\begin{itemize}
\item[-]  $X_{\ell}:=\{\{i,j\}\in X:i, j\in\ell\}$;  
\item[-]  $X_{\ell^C}:=\{\{i,j\}\in X: i,j\notin\ell\}$; 
\item[-]  $X_{(i)}:=\{\{i,j\}\in X:j\ne i\}$;
\item[-]  $X^{(i)}:=\{\{j,k\}\in X:j*k=i\}$.  
\end{itemize}
  If $\{i, j, k\}$ is a generative triplet, the following set of cardinal 10 is also nice,
\begin{equation}\label{eq_losTes}
\begin{array}{ll}
T_{(i,j,k)}:&= P_{\{i,j,k\}}\cup \big\{\{ i,i*j \},\{ i,i*k \},\{ i*j,i*k \},\{ i,i*j*k \}\big \}\\
&=P_{\{i, j, k\}} \cup P_{\{i, j, \,i \ast k\}} \cup P_{\{i,  \,i \ast j, k\}} 
\\&=X_{(i)}\cup \big\{\{ i*k,i*j \},\{ j,i*k \},\{ j,k \},\{ k,i*j \}\big \}.
\end{array}
\end{equation}  
Note that definition of $T_{(i,j,k)}$ does depend on the order of its indices. In particular, the first index $i$ plays a different role from the other two indices, $j$ and $k$.
The complete list of nice sets can be extracted from \cite[Propositions~3.23 and 3.25]{Paper1}:
 
\begin{theorem}  \cite[Theorem~3.9]{Paper2} 
The nice sets are exactly: $X$,  $X\setminus  X_{\ell^C}$, $P_{\{i,j,k\}}$, $T_{(i,j,k)}$, $X^{(i)},$ 
 and any subset of $X_{\ell}$, $X_{\ell^C}$, or $X_{(i)}$,   for some $\ell\in\mathbf{L}$, $i\ne j\in I$, $k\notin\ell_{ij}$.
\end{theorem}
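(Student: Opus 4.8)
The statement asserts an equality of two families, so the proof has two directions. That each listed set is nice is, for the families $P_{\{i,j,k\}}$, $T_{(i,j,k)}$, $X^{(i)}$ and the subsets of $X_\ell$, $X_{\ell^C}$, $X_{(i)}$, already recorded among the examples preceding the statement; for $X$ the absorbing property is vacuous, and for $X\setminus X_{\ell^C}$ I would give a short direct check. Here it is convenient to note that $X\setminus X_{\ell^C}$ is exactly the set of edges meeting $\ell$, that the only lines meeting both members of a pair $\{i,j\},\{i\ast j,k\}$ with $i,j,k$ generative are the three sides of the triangle $\{i,j,k\}$, and that each such side meets all six edges of $P_{\{i,j,k\}}$; this makes the absorbing property immediate. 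The real content is the converse: that the list exhausts the nice sets.

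For the converse, the organising idea is a dichotomy. Call a vertex-disjoint pair of edges of the form $\{i,j\},\{i\ast j,k\}$, with $i,j,k$ generative, a \emph{generating pair}; its second edge joins the third point of the line through the first edge to a point off that line. Since $T$ is nice, it contains a generating pair if and only if it contains a full set $P_{\{i,j,k\}}$. Accordingly I would split the argument into the case in which $T$ contains no generating pair (equivalently, no $P$-set) and the case in which $T\supseteq P_{\{i,j,k\}}$ for some generative triple.

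In the first case I would classify $T$ by its shape as a graph on $I$. If no two edges of $T$ are disjoint, then $T$ is a star or a triangle, hence a subset of some $X_{(i)}$, of some $X_\ell$ (a collinear triangle), or---using that the three diagonal points of a non-collinear triangle are collinear---of some $X_{\ell^C}$. If $T$ does contain two disjoint edges, then the absence of generating pairs forces their four endpoints into general position (a collinear triple among them would create a generating pair), so they span a quadrangle whose complementary three points form a line. It then remains to show that the whole of $T$ either lies inside a single such quadrangle, giving $T\subseteq X_{\ell^C}$, or else is forced to be the matching $X^{(i)}$. I would prove this by examining a maximal matching of $T$ and tracking, edge by edge, the restrictions that the no-generating-pair condition imposes on any edge reaching the diagonal line.

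In the second case I would climb the tower $P_{\{i,j,k\}}\subseteq T_{(i,j,k)}\subseteq X\setminus X_{\ell^C}\subseteq X$. After normalising the generative triple by a collineation, the edges outside $P_{\{i,j,k\}}$ fall into a handful of orbits under the order-six stabiliser of $\{i,j,k\}$, so it suffices to adjoin one representative from each orbit to $P_{\{i,j,k\}}$, compute the nice closure, and verify that it reaches the next rung; repeating the procedure for $T_{(i,j,k)}$ and for $X\setminus X_{\ell^C}$, and confirming that no nice set lies strictly between consecutive rungs, shows that $T$ is precisely one of $P_{\{i,j,k\}}$, $T_{(i,j,k)}$, $X\setminus X_{\ell^C}$ or $X$. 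I expect the main obstacle to be the subcase of the first branch in which $T$ contains two disjoint edges: separating the single-quadrangle configurations from the exceptional matching $X^{(i)}$, and excluding everything else, requires the tightest use of the Fano incidence structure, whereas the closure computations on the other branch, though numerous, are routine once the collineation symmetry has reduced the number of cases.
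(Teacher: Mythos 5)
Your proposal cannot be compared against an internal proof, because the paper does not prove this statement: it imports it verbatim from \cite[Theorem~3.9]{Paper2}, noting only that the list can be extracted from \cite[Propositions~3.23 and 3.25]{Paper1}. So what you have written is a self-contained argument, and its architecture is sound. The dichotomy (no generating pair versus containment of some $P_{\{i,j,k\}}$) is the right decomposition; the geometric facts you invoke check out (the only lines meeting both edges of a generating pair are the three sides of the triangle $\{i,j,k\}$, and each side meets all six edges of $P_{\{i,j,k\}}$; a graph in which every two edges meet is a star or a triangle, and the diagonal points of a generative triangle are collinear); and the closure computations in your second branch do terminate on listed sets, e.g.\ the nice closure of $P_{\{1,2,3\}}\cup\big\{\{1,4\}\big\}$ is exactly $T_{(1,2,3)}$. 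One caveat there: closures can skip rungs --- the closure of $P_{\{1,2,3\}}\cup\big\{\{4,5\}\big\}$ is already $X\setminus X_{\ell_{12}^C}$ --- so the inductive statement should be that every nice set properly containing a rung contains some listed set strictly above it, not literal rung-by-rung climbing.

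Two points need repair. First, niceness is not hereditary (the two-element subset $\big\{\{i,j\},\{k,i\ast j\}\big\}$ of the nice set $P_{\{i,j,k\}}$ is not nice), so the claim that \emph{every subset} of $X_\ell$, $X_{\ell^C}$, $X_{(i)}$ is nice does not follow from those three sets being nice, which is all the paper's examples record; you need the observation that these three sets contain no generating pair at all, so that all their subsets are vacuously nice --- conveniently, the same observation that places them in your first branch. Second, the edge-by-edge tracking you defer in the two-disjoint-edge subcase hinges on one identity you should make explicit: if $\{i,j\}$ and $\{k,l\}$ are disjoint and their four endpoints contain no collinear triple, then $i\ast j=k\ast l=:p$, because the lines $\ell_{ij}$ and $\ell_{kl}$ meet in a point that can be none of $i,j,k,l$. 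Hence both edges lie in the single matching $X^{(p)}$, and the diagonal line of the quadrangle is $\{p,m,n\}$, where $\{m,n\}$ is the third edge of $X^{(p)}$. From this everything follows quickly: no edge of $T$ may contain $p$ (it would have to lie in $\ell_{ij}$ and in $\ell_{kl}$ simultaneously), any edge joining the quadrangle to $\{m,n\}$ forms a generating pair with one of the two original edges, and $\{m,n\}$ forms a generating pair with every quadrangle edge except the original two; so $T\subseteq X_{\ell^C}$ or $T\subseteq X^{(p)}$, which is exactly the conclusion your sketch needs.
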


According to \cite[Theorem~3.27]{Paper1}, there are exactly 779 nice sets. Each nice set induces a new $\mathbb Z_2^3$-graded Lie algebra obtained by graded contraction of $\mathfrak g_2$ \cite{Paper1}, as well as two more Lie algebras obtained by graded contraction of the orthogonal Lie algebras $\mathfrak{so}(7,\mathbb C)$ and $\mathfrak{so}(8,\mathbb C)$  (see Definitions~3.1 and  3.10 in \cite{Paper2}).  However, in all three cases, the Lie algebras related to \emph{collinear} nice sets are necessarily isomorphic.

\begin{remark}\label{re_colineal} \cite[Definition 3.16]{Paper1}
 A natural action of the group $S_\ast(I)$ on $X$ is given by $\widetilde{\sigma}(\{i, j\}) = \{\sigma(i), \sigma(j)\}$, for $\sigma \in S_\ast(I)$ and $\{i, j\} \in X$. 
Hence, there is a natural action of $S_\ast(I)$ on $\mathcal{P}(X)$.
This gives rise to an equivalence relation on the set of all nice sets: $T \sim_c T'$ if there exists $\sigma \in S_\ast(I)$ such $\widetilde{\sigma}(T) = T'$. In such  cases, we say that $T$ and $T'$ are \bfemph{collinear}.  \end{remark}

 The classification of nice sets up to collineations is made interesting by the combination of the facts that any nice set induces a Lie algebra, and that collinear nice sets induce isomorphic Lie algebras. This classification is achieved in \cite[Theorem~3.27]{Paper1}, 
according to which there exist exactly 24 classes of nice sets up to collineations.  Moreover, three of these classes give rise to infinite families of non-isomorphic Lie algebras  depending on parameters \cite[\S4]{Paper1}.


\subsection{Generalised nice sets}

 In a work which is still in progress \cite{Paper4}, we have proved that a new type of structure (strongly inspired by the above nice sets)
 must serve as the support of any graded 
contraction of the $\mathbb Z_2^3$-gradings, induced from the complex octonion algebra, on the four exceptional complex Lie algebras other than $\mathfrak g_2$.
A philosophy similar to that of \cite{Paper2}, will offer interesting new families of high-dimensional solvable Lie algebras. This is the main motivation behind the definition and investigation of the following sets.\smallskip



Denote by $X_0 := \{ \{i, j\} \mid i, j \in I_0\}$, a set of cardinal 36 containing   all 21 of \emph{edges} in $X$.
The extension of notation from nice sets to the current context introduces some ambiguity. In particular, $X_0$ is not a subset of $\mathcal P(I_0)$, 
since $\{i, j\}$ does not refer to a subset of $I_0$ but instead to an unordered set with two (not necessarily distinct) elements in $I_0$. To clarify,
 the \emph{pairs}
$\{i, i\}$ are in $X_0$. Denote also by $X_E := \big\{\{i, i\} \mid i \in I_0 \big\}$ and  $X_F := \big\{\{0, i\} \mid i \in I_0 \big\}$, 
  so that $X_0=X\cup X_E\cup X_F$. 
  We also continue to use the notation in \eqref{eq_defP} for $P_{\{i,j,k\}}$, this time using arbitrary indices $i, j, k \in I_0$ instead of generative $i, j, k \in I$.

\begin{remark}\label{re_tipos}
 It is useful to have the different possible forms of the subsets $P_{\{i,j,k\}}$ written explicitly.     
  To be precise, if $i$ and $j$ in $I$ are   distinct,
 \begin{itemize}
 \item[-] $P_{\{ 0,0,0\}}=  \big\{ \{0,0 \} \big\};$
 \item[-] $P_{\{ 0,0,i\}}=  \big\{ \{ 0,i\},\{ 0,0\} \big\};$
 \item[-] $P_{\{ i,i,i\}}=  \big\{ \{ 0,i\},\{ i,i\} \big\};$
 \item[-] $P_{\{ 0,i,i\}}=  \big\{ \{ 0,i\},\{i,i \},\{ 0,0\} \big\}$;
 \item[-] $P_{\{i,i,j \}}=  \big\{ \{i,i \},\{i,j \}, \{j,0 \}, \{i,i\ast j \} \big\};$
 \item[-] $P_{\{ 0,i,j\}}=  \big\{ \{0,i \},\{ 0,j\},\{ 0,i\ast j\},\{ i,j\} \big\};$
 \item[-] $P_{\{i,j,i\ast j \}}=  \big\{ \{ i,j\},\{ j,i\ast j\},\{ i,i\ast j\},\{i,i\},\{j,j \},\{i\ast j,i\ast j \} \big\}.$
 \end{itemize}
 Observe that now the cardinals are, respectively, $1, 2,2,3,4,4$, and 6, in contrast with the case where $i,j,k\in I$ are generative
  (where the cardinal was always 6).
\end{remark}

\begin{define}\label{defGNS}
A subset $T$ of $X_0$ is said to be a {\bf generalised nice set} if $\, \{i, j\},\, \{i\ast j, k\} \in T$ implies $P_{\{i,j,k\}} \subseteq T$, for any $i, j, k \in I_0$.
\end{define}
 
The term we have assigned to it, \emph{generalised},  
may cause confusion because a nice subset  may not be generalised.
 If $T\subseteq X$ is a generalised nice set then $T$ is nice, but the nice set $X$ is not a generalised nice set. 
 The justification for the use of this term is formal similitude. 
\smallskip

We begin by noticing some trivial facts.

\begin{lemma} \label{trivialfacts}
The following hold for a generalised nice set $T \subseteq X_0$ and $i, j \in I$ distinct.
\begin{itemize}
\item[\rm (i)] $T \cap X$ is a nice set;
\item[\rm (ii)] if $\{i, 0\} \in T$, then $\{0, 0\} \in T$; 
\item[\rm (iii)] if $\{i, i \ast j\}$, $\{j, i \ast j\} \in T$, then $\{i \ast j, i \ast j\} \in T$;
\item[\rm (iv)] if $T \subseteq X$, then $T \cup \big\{\{0, 0\}\big\}$ is generalised nice.

\end{itemize}
\end{lemma}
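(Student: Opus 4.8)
The plan is to derive all four statements directly from the absorbing property in Definition~\ref{defGNS}, the only real work being to choose, in each case, indices $a,b,c\in I_0$ for which the two hypotheses $\{a,b\},\{a\ast b,c\}\in T$ are already known to hold, and then to read the desired pair off from $P_{\{a,b,c\}}$ using the explicit (degenerate) descriptions collected in Remark~\ref{re_tipos}. No genuinely deep step is involved --- consistent with the name of the lemma --- but two places will require care: checking in (i) that generativity prevents any of the six members of $P$ from degenerating, and in (iv) the bookkeeping that adjoining $\{0,0\}$ creates no new absorbing obligations.

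For (i), I would suppose $i,j,k\in I$ are generative with $\{i,j\},\{i\ast j,k\}\in T\cap X$. Since $T$ is generalised nice, the property gives $P_{\{i,j,k\}}\subseteq T$, and it then remains only to observe that every one of the six pairs of $P_{\{i,j,k\}}$ is an honest edge: $\{i,j\},\{j,k\},\{k,i\}$ clearly are, while $\{i,j\ast k\},\{j,k\ast i\},\{k,i\ast j\}$ are because generativity (the order-independent condition of non-collinearity of $\{i,j,k\}$) forces $i\ne j\ast k$, $j\ne k\ast i$, $k\ne i\ast j$, so no entry coincides with another or equals $0$. Hence $P_{\{i,j,k\}}\subseteq T\cap X$ and $T\cap X$ is nice. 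For (ii), I would take $a=0$, $b=i$, $c=0$: then $\{a,b\}=\{0,i\}\in T$ and $\{a\ast b,c\}=\{0\ast i,0\}=\{i,0\}\in T$, so Remark~\ref{re_tipos} gives $P_{\{0,i,0\}}=P_{\{0,0,i\}}=\{\{0,i\},\{0,0\}\}\subseteq T$, whence $\{0,0\}\in T$.

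For (iii), I would write $k:=i\ast j$ and apply the property with $a=i$, $b=k$, $c=k$. Then $\{a,b\}=\{i,i\ast j\}\in T$, and since $i,j,k$ are collinear one has $i\ast k=i\ast(i\ast j)=j$, so $\{a\ast b,c\}=\{j,i\ast j\}\in T$; the degenerate set $P_{\{i,k,k\}}$ contains $\{k,k\}=\{i\ast j,i\ast j\}$, which gives the claim.

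The one statement requiring a short case analysis --- and hence the mild obstacle of the proof --- is (iv). I would put $T':=T\cup\{\{0,0\}\}$ and verify Definition~\ref{defGNS} for $T'$: assume $\{a,b\},\{a\ast b,c\}\in T'$ for some $a,b,c\in I_0$. Each of these two pairs lies either in $T\subseteq X$ (so it is an edge with two distinct nonzero entries) or equals $\{0,0\}$. If both are edges, the new element $\{0,0\}$ is irrelevant and $T$'s own absorbing property yields $P_{\{a,b,c\}}\subseteq T\subseteq T'$. The point to check is that any trigger involving $\{0,0\}$ forces $0$-entries incompatible with an edge of $X$: if $\{a,b\}=\{0,0\}$ then $a\ast b=0$ and $\{0,c\}\notin X$, forcing $\{a\ast b,c\}=\{0,0\}$ and $c=0$; symmetrically, $\{a\ast b,c\}=\{0,0\}$ with $\{a,b\}$ an edge is impossible, since an edge $\{a,b\}$ has $a\ast b\in I\ne 0$. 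Thus the only new trigger is $a=b=c=0$, and then $P_{\{0,0,0\}}=\{\{0,0\}\}\subseteq T'$, so $T'$ is generalised nice. I expect this verification to be the only part demanding attention, precisely because one must confirm that no edge of $T$ can combine with $\{0,0\}$ to produce an unmet absorbing requirement.
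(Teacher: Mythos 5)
Your proposal is correct and follows essentially the same route as the paper: parts (ii) and (iii) invoke the absorbing property on the same degenerate sets ($P_{\{0,i,0\}}=P_{\{i,0,0\}}$ and $P_{\{i,\,i\ast j,\,i\ast j\}}$), and part (iv) is the same three-way case analysis (both pairs in $T$; $\{i,j\}=\{0,0\}$ forcing $k=0$; $\{i\ast j,k\}=\{0,0\}$ ruled out because an edge has $i\ast j\ne 0$). Your fuller justification of (i), which the paper dismisses as trivial, is also correct, since generativity guarantees all six members of $P_{\{i,j,k\}}$ lie in $X$.
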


\begin{proof} 
The proof of (i) is trivial.
If $\{i, 0\} \in T$, 
then  $\{0,0\}\in P_{\{i,0,0\}} \subseteq T$, so (ii) is clear.
A similar argument works for (iii) since $\{i, i \ast j\},\{j, i \ast j\} \in T$ implies $P_{\{i, i \ast j, i \ast j\}} \subseteq T$. 
For (iv), assume $\, \{i, j\},\, \{i\ast j, k\} \in T\cup \big\{\{0, 0\}\big\}$,
and let us check that then
$P_{\{i,j,k\}} \subseteq T\cup \big\{\{0, 0\}\big\}$. 
If $i=j=0$, then  $\{0,k \} \in T\cup \big\{\{0, 0\}\big\}$ means $k=0$ and $\big\{\{0,0\}\big\}=P_{\{0,0,0\}} \subseteq T\cup \big\{\{0, 0\}\big\}$.
If this is not the case but $i\ast j=k=0$, we have $i=j$ and so  the  contradiction $\{i, i\} \in T \subseteq X$ with $i\ne0$.
Finally, if both $\, \{i, j\},\, \{i\ast j, k\} \in T$, it is enough to take into account that $T$ is generalised nice.
\end{proof}

\begin{remark} \label{re_PnoesGNS}
 In contrast to the context of nice sets, we see that sets of the form $P_{\{i,j,k\}}$ are not necessarily generalised nice. For instance, 
$P_{\{i, i, i\}}\ (i\neq 0)$  is not, since it does not satisfy  Lemma~\ref{trivialfacts}(ii). 
  In what follows we write $\langle S \rangle$ to denote the smallest generalised nice set containing $S\subseteq X_0$.
It is easy to check that $\langle P_{\{i, i, i\}} \rangle = P_{\{0, i, i\}}$. 
Similarly,  $P_{\{i, i, j\}} $ is not generalised nice, and 
\begin{equation}\label{eq_auxiliar}
\langle P_{\{i, j, i\}} \rangle = \big\{\{0, 0\}, \{0, i\}, \{0, j\}, \{0, i\ast j\}, \{i, i\}, \{i, j\}, \{i, i\ast j\} \big\}.
\end{equation}
Indeed, if $T$ is any generalised nice set   containing $P_{\{i, j, i\}} $, then  $\{i,i\ast j\}$ and $ \{j,0\}$   belong  to $T$, and so 
$\{0, i\}, \{0, i\ast j\}\in P_{\{i,i*j,0\}}\subseteq T$.  
Recall that $\{0, 0\}\in   T$ by Lemma~\ref{trivialfacts}(ii). 
 Thus, the set on the right side of
\eqref{eq_auxiliar} is contained in $T$. We are finished because that set is already generalised nice.
\end{remark}
\smallskip

   The notion of collinearity   recalled in Remark~\ref{re_colineal} can be extended to our setting without changes: 
   $S_\ast(I_0)$ acts on $ X_0$ 
   by $\widetilde{\sigma}(\{i, j\}) = \{\sigma(i), \sigma(j)\}$, for $\sigma \in S_\ast(I_0)$ and $\{i, j\} \in X_0$. Therefore,  $S_\ast(I_0)$ acts on $\mathcal{P}(X_0)$ 
   too. Two generalised nice sets $T$ and $T'$ are \emph{collinear}, denoted $T \sim_c T'$,  if there exists $\sigma \in S_\ast(I_0)$ such $\widetilde{\sigma}(T) = T'$.

 The main aim of this paper is to achieve a classification of generalised nice sets up to collineation.
 Notice that any generalised nice set $T \subseteq X_0$ can be expressed as $T = (T \cap X) \cup (T - X)$, where $T \cap X$ is a nice set by Lemma~\ref{trivialfacts}(i). In order to make this combinatorial problem more manageable we  will split our study of generalised nice sets into the following four cases:

\begin{enumerate}
\item[-] $T \subseteq X;$
\item[-] $T \cap X = \emptyset;$
\item[-]    $T \cap X$ is not generalised nice; 
\item[-] $T \cap X \neq \emptyset$ and $T \cap X$ is generalised nice. 
\end{enumerate} 

\smallskip 

We close this section with a result relating the parts $T - X$ and $T \cap X$ from the previous decomposition of $T$:

\begin{prop} \label{charTintXnotgns}
The following are equivalent for a generalised nice set $T$ satisfying that both  $T - X$
  and $T \cap X$ are non-empty: 
\begin{itemize}
\item[\rm (i)] $T - X$ is not a generalised nice set.
\item[\rm (ii)] There exist $i, j \in I$ distinct such that $\langle P_{\{i, j, i\}} \rangle \subseteq T$.
\item[\rm (iii)] There exist $i, j \in I$ distinct such that $\{i, i\}, \{0, j\} \in T$.
\item[\rm (iv)] $T \cap X$ is not a generalised nice set.
\end{itemize}
\end{prop}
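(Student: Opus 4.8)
The natural plan is to prove the four equivalences by establishing a cycle of implications, choosing the order so that each step exploits the absorbing property of generalised nice sets in the most direct way. I would aim for the chain (i) $\Rightarrow$ (iii) $\Rightarrow$ (ii) $\Rightarrow$ (iv) $\Rightarrow$ (i), since (iii) is the most concrete condition (a pair of explicit elements) and is therefore the easiest target to hit first, while (ii) and (iv) are consequences that follow by the closure computations already recorded in Remark~\ref{re_PnoesGNS} and Lemma~\ref{trivialfacts}.

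The first implication (i) $\Rightarrow$ (iii) is where I expect the real work to be, and it is the main obstacle. The hypothesis is that $T - X$ fails to be generalised nice, meaning there exist $i,j,k \in I_0$ with $\{i,j\}, \{i\ast j, k\} \in T - X$ but $P_{\{i,j,k\}} \not\subseteq T - X$. Since $T$ itself \emph{is} generalised nice, we have $P_{\{i,j,k\}} \subseteq T$; the failure for $T - X$ must therefore come from $P_{\{i,j,k\}}$ containing some edge lying in $X$. The key point is to analyse which of the seven forms of $P_{\{i,j,k\}}$ listed in Remark~\ref{re_tipos} can have $\{i,j\}$ and $\{i\ast j,k\}$ both in $X_0 \setminus X = X_E \cup X_F$ while simultaneously producing an element of $X$. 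Running through the cases, the only configuration that can straddle $X$ and its complement in this way forces $i$ and $j$ (after relabelling) to satisfy exactly that some diagonal pair $\{i,i\}$ and some $0$-pair $\{0,j\}$ both lie in $T$, which is precisely~(iii). This case analysis is bookkeeping-heavy but mechanical; the subtlety is ensuring no case is overlooked and that the extracted indices genuinely lie in $I$ (not $0$), using that $T\cap X$ and $T-X$ are both assumed non-empty.

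The remaining implications are comparatively routine. For (iii) $\Rightarrow$ (ii), I would observe that $\{i,i\} = P_{\{i,i,j\}}$-type reasoning together with $\{0,j\} \in T$ lets me generate the elements of $\langle P_{\{i,j,i\}}\rangle$; more directly, $\{i,i\},\{0,j\}\in T$ gives $\{i,j\}\in P_{\{i,i,j\}}\subseteq T$ and $\{i,i\}\in T$, which together are exactly $P_{\{i,j,i\}}$, so by minimality $\langle P_{\{i,j,i\}}\rangle \subseteq T$. For (ii) $\Rightarrow$ (iv), I note that the explicit description of $\langle P_{\{i,j,i\}}\rangle$ in \eqref{eq_auxiliar} contains $\{0,j\}$ together with edges of $X$ such as $\{i,j\}$ and $\{i,i\ast j\}$; intersecting with $X$ and checking the absorbing condition shows $T \cap X$ inherits a configuration $\{i,j\},\{i\ast j, k\}$ whose $P$-set is forced to include a diagonal or $0$-pair outside $X$, so $T\cap X$ cannot be generalised nice. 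Finally (iv) $\Rightarrow$ (i) is the cleanest: if $T - X$ \emph{were} generalised nice, then since $T = (T\cap X)\cup(T - X)$ and the two pieces interact only through pairs that cross $X$, one argues that $T \cap X$ would have to be generalised nice as well, contradicting~(iv); concretely I would show the contrapositive, that $T\cap X$ generalised nice and $T-X$ generalised nice combine to keep $T\cap X$ generalised nice, which is immediate, so failure of (iv) propagates to failure of (i). The only care needed throughout is to keep track of when an index equals $0$, since the degenerate forms of $P_{\{i,j,k\}}$ in Remark~\ref{re_tipos} behave differently from the generative case.
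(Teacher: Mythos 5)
Your first three implications are sound and essentially reproduce the paper's own work: the case analysis behind your (i) $\Rightarrow$ (iii) (ruling out the forms $P_{\{0,0,0\}}$, $P_{\{0,0,i\}}$, $P_{\{0,i,i\}}$, $P_{\{i,i,i\}}$, whose elements all lie outside $X$ and are therefore absorbed inside $T-X$, so that only the form $P_{\{i,i,j\}}$ with $i,j\in I$ distinct can witness the failure) is exactly the paper's argument for (i) $\Rightarrow$ (ii); your (iii) $\Rightarrow$ (ii) (apply the absorbing property to $\{i,i\}$ and $\{i\ast i,j\}=\{0,j\}$, then take the closure) and your (ii) $\Rightarrow$ (iv) (the pairs $\{i,j\},\{i,i\ast j\}\in T\cap X$ violate niceness because $\{i,i\}\in P_{\{i,j,i\}}$ is not in $X$) likewise match the paper's steps.

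The genuine gap is in (iv) $\Rightarrow$ (i), the implication that closes your cycle. You claim it is \emph{immediate} and say you would prove ``the contrapositive, that $T\cap X$ generalised nice and $T-X$ generalised nice combine to keep $T\cap X$ generalised nice''; but that statement is a tautology (if $A$ and $B$ then $A$), not the contrapositive. The contrapositive of (iv) $\Rightarrow$ (i) is: if $T-X$ is generalised nice, then $T\cap X$ is generalised nice --- and this is not immediate; it is one of the two substantive implications of the proposition. To prove it one must take a failure $\{i,j\},\{i\ast j,k\}\in T\cap X$ with $P_{\{i,j,k\}}\nsubseteq T\cap X$, observe (using $P_{\{i,j,k\}}\subseteq T$) that $i,j,k$ cannot be generative, since otherwise $P_{\{i,j,k\}}\subseteq X$ would force $P_{\{i,j,k\}}\subseteq T\cap X$; hence $k\in\{i,j\}$ (as $k\ne i\ast j$), say $k=i$, whence $\{i,i\},\{0,j\}\in T-X$; then the absorbing property of $T-X$ (which is being assumed) gives $P_{\{i,i,j\}}\subseteq T-X$, contradicting $\{i,j\}\in X$. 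This is precisely the content of the paper's implications (iv) $\Rightarrow$ (ii) and (iii) $\Rightarrow$ (i), which you cannot skip: without it your chain only establishes (i) $\Rightarrow$ (iii) $\Rightarrow$ (ii) $\Rightarrow$ (iv), not the equivalence. Relatedly, the appeal to ``the two pieces interact only through pairs that cross $X$'' has no content: every pair lies either in $X$ or in $X_0-X$; the interaction happens through the sets $P_{\{i,j,k\}}$, which can contain elements of both kinds, and analysing exactly when that happens is the whole point.
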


\begin{proof}

\noindent (i) $\Rightarrow$ (ii). 
There exist $a, b, c \in I_0$ such that $\{a, b\}, \{a \ast b, c\} \in T - X$ but $P_{\{a, b, c\}} \nsubseteq T - X$. 
As $P_{\{a, b, c\}} \subseteq T$, this means that $P_{\{a, b, c\}} \cap X\ne\emptyset$.   Since
$\{a, b\}, \{a \ast b, c\} \in \big\{\{i, i\} \mid i \in I_0 \big\}\cup \big\{\{i, 0\} \mid i \in I \big\}$,
 we can narrow the possibilities for $\{a,b,c\}$
to:
$$
\{ 0,0,0\}, \{0,0,i \},  \{ 0,i,i\},\{i,i,i\}, \{i,i,j \},  
 $$
with $i$ and $j$ distinct indices in $I$. 
 In the first four cases the respective sets $P_{\{ 0,0,0\}}$, $P_{\{ 0,0,i\}}$, $P_{\{ 0,i,i\}}$ and $P_{\{ i,i,i\}}$ are contained in $T-X$. Therefore none of those possibilities could actually occur. The only possibility left is $\{a, b, c\}= \{i,i,j \}$, and then 
$P_{\{i,i,j\}} \subseteq T$.

\noindent   (ii) $\Rightarrow$ (iii) is clear, since $\{i, i\}, \{0, j\} \in P_{\{i, i, j\}}$.

\noindent (iii) $\Rightarrow$ (i). Let $i, j \in I$ be such that $i \neq j$ and $\{i, i\}, \{0, j\} \in T$. Using that $T$ is generalised nice, we get that $P_{\{i, i, j\}} \subseteq T$. 
Notice that both $\{i, i\}, \{0, j\}  \in T- X$. 
 If $T - X$ were a generalised nice set, then $P_{\{i, i, j\}} \subseteq T - X$. 
This is clearly untrue, since $\{i,   j\}  \in P_{\{i, i, j\}} $ but   $\{i,   j\}  \in X$.
\smallskip  

\noindent (ii) $\Rightarrow$ (iv). Let $i, j \in I$ be such that $i \neq j$ and $\langle P_{\{i, i, j\}} \rangle \subseteq T$. 
In particular, we have that $\{i, j\}, \{i\ast j, i\} \in T \cap X$. But $P_{\{i, j, i\}} \nsubseteq T \cap X$ 
($\{i, i\}\notin X $)
and (iv) holds.   
\smallskip  

\noindent (iv) $\Rightarrow$ (ii).  
Since we are assuming that $T \cap X$ is not generalised nice, there exist 
pairwise distinct $i,j,k\in I$ with 
$\{i,j\}, \{i \ast j, k\} \in T\cap X$
but $P_{\{i, j, k\}} \nsubseteq T \cap X$.  
If $i,j,k$ were generative, then  all the elements in $P_{\{i, j, k\}} $ would belong to $X$ and,
as $P_{\{i, j, k\}} \subseteq T  $,  we  would get the contradiction $P_{\{i, j, k\}} \subseteq T \cap X$.
This forces $k$ to be either $i$ or $j$  
($k\neq i\ast j$ because $\{i\ast j, k\}\in   X).$
 Relabelling the indices if necessary we can assume $k=i$, so
 that $P_{\{i, j, i\}} \subseteq T$.   
As  $T$ is generalised nice, we also find  $\langle P_{\{i, j, i\}} \rangle  \subseteq T$.
\end{proof}

\begin{cor}
Let $T$ be a generalised nice set such that $T \cap X \neq \emptyset$ and $T - X \neq \emptyset$. Then $T - X$ is generalised nice if and only if $T \cap X$ is so.
\end{cor}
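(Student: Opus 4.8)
The plan is to obtain the Corollary directly from Proposition~\ref{charTintXnotgns}, of which it is merely the contrapositive of a single equivalence. The hypotheses of the Corollary---namely that $T$ is generalised nice with both $T \cap X$ and $T - X$ non-empty---are exactly the standing hypotheses of the Proposition, so the four equivalent conditions (i)--(iv) are all at our disposal.

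Concretely, I would isolate the equivalence (i) $\Leftrightarrow$ (iv). Condition (i) states that $T - X$ is \emph{not} generalised nice, and condition (iv) states that $T \cap X$ is \emph{not} generalised nice. Since a biconditional $P \Leftrightarrow Q$ is logically equivalent to $\neg P \Leftrightarrow \neg Q$, negating both sides of (i) $\Leftrightarrow$ (iv) gives precisely that $T - X$ \emph{is} generalised nice if and only if $T \cap X$ \emph{is} generalised nice, which is the assertion of the Corollary.

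I anticipate no genuine obstacle here: the entire substance of the argument has already been discharged in the proof of Proposition~\ref{charTintXnotgns}, through the cycle (i) $\Rightarrow$ (ii) $\Rightarrow$ (iii) $\Rightarrow$ (i) together with (ii) $\Leftrightarrow$ (iv), which jointly yield the full four-way equivalence. The only point worth checking is that the membership conditions $T \cap X \neq \emptyset$ and $T - X \neq \emptyset$ appearing in the Corollary coincide with the hypotheses under which the Proposition's equivalences are valid; this is immediate from the two statements. Hence the Corollary requires no new computation and follows at once by contraposition.
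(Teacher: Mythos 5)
Your proof is correct and is exactly the paper's (implicit) argument: the Corollary is stated immediately after Proposition~\ref{charTintXnotgns} with no separate proof precisely because it is the contrapositive of the equivalence (i) $\Leftrightarrow$ (iv), under the same standing hypotheses. Nothing further is needed.
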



\section{Generalised nice sets contained in $X$} \label{gnsXSection}

We begin by characterising the generalised nice sets that are contained in $X$. 

\begin{prop} \label{gnsX}
A subset $T$ of $X$ is a generalised nice set if and only if it satisfies the following two conditions:
\begin{enumerate}
\item[\rm (i)] $T$ is a nice set.
\item[\rm (ii)] There is no $i \in I$ such that $\{i, j\}, \{i, i \ast j\} \in T$ for $j \in I$ with $j \neq i$. 
\end{enumerate}
\end{prop}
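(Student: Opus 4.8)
The plan is to pin down exactly where the generalised nice condition is stronger than the plain nice condition when $T \subseteq X$. The key preliminary observation I would make is this: if $i,j,k \in I_0$ satisfy $\{i,j\}, \{i \ast j, k\} \in T$, then, because $T \subseteq X$, both pairs already lie in $X$, which forces $i, j, i\ast j, k \in I$ with $i \ne j$ and $k \ne i \ast j$. Consequently the triplet $\{i,j,k\}$ is generative unless $k = i$ or $k = j$. So the only instances of the absorbing property of Definition~\ref{defGNS} that are not already covered by niceness are these two degenerate ones, and I would note that $k=j$ reduces to $k=i$ after exchanging the roles of $i$ and $j$ (using $i\ast j = j\ast i$). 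This reduction is the backbone of both implications.

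For the forward direction, assume $T$ is generalised nice. Condition (i) is immediate from Lemma~\ref{trivialfacts}(i). For (ii) I would argue by contradiction: if $\{i,j\}, \{i, i\ast j\} \in T$ with $i \ne j$, then reading the second pair as $\{i\ast j, i\}$ and invoking Definition~\ref{defGNS} with $(i,j,k) = (i,j,i)$ yields $P_{\{i,j,i\}} \subseteq T$. The crucial computation is that $P_{\{i,j,i\}}$, which equals $P_{\{i,i,j\}}$ by symmetry of the index multiset, contains $\{i,i\} \in X_E$ and $\{j,0\} \in X_F$ (see Remark~\ref{re_tipos}); since neither belongs to $X$, this contradicts $T \subseteq X$, establishing (ii).

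For the converse, assume (i) and (ii) and take arbitrary $i,j,k \in I_0$ with $\{i,j\}, \{i\ast j, k\} \in T$; the goal is $P_{\{i,j,k\}} \subseteq T$. By the reduction above I have $i,j,k \in I$, $i \ne j$, $k \ne i \ast j$. If $\{i,j,k\}$ is generative, then (i) delivers $P_{\{i,j,k\}} \subseteq T$ directly. Otherwise $k = i$ or $k = j$; in the first case $\{i,j\}, \{i, i\ast j\} \in T$ is precisely what (ii) forbids, and in the second case $\{i,j\}, \{j, i\ast j\} \in T$ is the same forbidden pattern about the vertex $j$. Hence under (ii) the hypothesis never arises, and the implication holds vacuously, so $T$ is generalised nice.

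The only genuinely non-formal ingredient — and hence the main thing to get right — is the explicit shape of $P_{\{i,j,i\}}$: it necessarily pulls in the off-$X$ pairs $\{i,i\}$ and $\{j,0\}$. This is exactly the phenomenon absent in the ordinary nice-set theory and is what makes condition (ii) the precise extra requirement; the remaining bookkeeping over the two non-generative cases is routine.
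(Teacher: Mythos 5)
Your proof is correct and follows essentially the same route as the paper's: the forward direction derives $P_{\{i,j,i\}} \subseteq T$ and gets a contradiction from $\{i,i\} \notin X$, and the converse shows that under (ii) any instance $\{i,j\},\{i\ast j,k\} \in T$ forces $i,j,k$ to be generative, so (i) applies. Your version merely spells out a few details the paper leaves implicit (the explicit form of $P_{\{i,j,i\}}$ and the symmetric case $k=j$), which does not change the argument.
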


\begin{remark}
Notice that condition (ii) above can be rephrased as follows:
\begin{center}
(ii)' $\quad |T \cap X_{\ell_{ij}}| \leq 1$, where $X_{\ell_{ij}} = \big\{\{i, j\}, \{i, i\ast j\}, \{j, i \ast j\} \big\}$.
\end{center}
\end{remark}

\begin{proof}
Suppose first that $T \subseteq X$ is a generalised nice set. Then $T$ clearly satisfies (i). Suppose  that $T$ does not satisfy (ii). Then there exists $i \in I$ such that $\{i, j\}, \{i, i \ast j\} \in T$ for some $j \in I$, $j \neq i$. From here we obtain that $P_{\{i, j, i\}} \subseteq T$, which implies that $\{i, i\} \in T$, a contradiction, since we are assuming that $T \subseteq X$. Conversely, assume that $T \subseteq X$ satisfies (i) and (ii). Take $i, j, k \in  I_0$ such that $\{i, j\}, \{i \ast j, k\} \in T$. Notice that necessarily $i,j,k\in I$ and $i \neq j$ since $T \subseteq X$, by hypothesis. Similarly, $k \neq i \ast j$. Moreover, $k \notin \{i, j\}$ by (ii). These considerations show that $i, j, k$ are generative, so $P_{\{i, j, k\}} \subseteq T$ by (i).
\end{proof}

\begin{cor} \label{somegns}
  There are 14 generalised nice sets contained in $X$ up to collineation: the empty set and 
\smallskip

- Cardinal 1: $\, \, \big\{ \{1, 2\} \big\}$.

\smallskip

- Cardinal 2: $\, \, \big\{ \{1, 2\}, \, \{1, 3\} \big\}, \, \,
\big\{ \{1, 2\}, \, \{6, 7\} \big\}$.

\smallskip

- Cardinal 3: $\, \,
\big\{ \{1, 2\}, \, \{1, 3\}, \{1, 4\} \big\}, \, \,
\big\{ \{1, 2\}, \, \{1, 3\}, \{1, 7\} \big\}$,  

\smallskip 

$\qquad \qquad \qquad \big\{ \{1, 2\}, \, \{1, 6\}, \{2, 6\} \big\}, \, \, \, \big\{ \{1, 2\}, \, \{1, 6\}, \{6, 7\} \big\}$,
$\big\{ \{2, 5\}, \, \{3, 6\}, \{4, 7\} \big\}$.

\smallskip

- Cardinal 4: 
$\, \,
\big\{ \{1, 2\}, \, \{1, 6\}, \{1, 7\}, \{2, 6\} \big\}, \, \,
\big\{ \{1, 2\}, \, \{1, 6\}, \{2, 7\}, \{6, 7\} \big\}.$

\smallskip

- Cardinal 5: 
$\, \,
\big\{ \{1, 2\}, \, \{1, 6\}, \{1, 7\}, \{2, 6\}, \{2, 7\} \big\}$.

\smallskip

- Cardinal 6: $\, \, 
\big\{ \{3, 4\}, \, \{3, 6\}, \{3, 7\}, \{4, 6\}, \{4, 7\}, \{6, 7\} \big\}$,

$\qquad \qquad \qquad \, 
\big\{ \{1, 2\}, \, \{1, 3\}, \{2, 3\}, \{1, 7\}, \{2, 6\}, \{3, 5\} \big\}$.
\end{cor}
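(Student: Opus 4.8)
The plan is to enumerate the generalised nice sets inside $X$ by exploiting the characterisation just proved in Proposition~\ref{gnsX}: a subset $T\subseteq X$ is generalised nice precisely when it is a nice set (condition (i)) satisfying the line-restriction $|T\cap X_{\ell}|\le 1$ for every line $\ell\in\mathbf L$ (condition (ii)'). Since the collineation group $S_\ast(I)$ acts on $X$ and preserves both niceness and the quantity $|T\cap X_\ell|$, it suffices to run through one representative of each collinearity class of nice sets and discard those violating (ii)'. I would therefore start from the classification of nice sets up to collineation recalled above (the 24 classes from \cite[Theorem~3.27]{Paper1}), and simply test each class against condition (ii)'. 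The heuristic here is that (ii)' forbids any two of the three edges of a single line from both lying in $T$, so that the ``large'' nice sets---those built from full sub-line configurations like $X_\ell$, or the sets $P_{\{i,j,k\}}$, $T_{(i,j,k)}$, $X^{(i)}$, $X_{(i)}$ and their large subsets---are mostly eliminated, leaving only sparse configurations.

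The key organising principle is to stratify by cardinality, since both niceness and (ii)' are easy to check on small sets and collineations preserve cardinality. For cardinals $0,1,2$ the verification is immediate: the empty set and any single edge are trivially generalised nice, and the two two-edge representatives $\{\{1,2\},\{1,3\}\}$ (two edges sharing a vertex but on different lines) and $\{\{1,2\},\{6,7\}\}$ (two disjoint edges on different lines) exhaust the collinearity classes of two-edge nice sets meeting (ii)'. For each larger cardinal I would first list which collinearity classes of nice sets of that cardinality exist, then for each surviving candidate exhibit (or appeal to) a representative and verify directly that no line contributes two edges, and that the absorbing property for generative triples holds. A useful structural observation is that once (ii)' holds, Proposition~\ref{gnsX} reduces niceness to checking the generative absorbing condition only; this streamlines the verification of the cardinal-$4$, $5$ and $6$ cases. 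In particular the cardinal-$6$ representatives---the nice set $X_{\ell^C}$ on the complement of a line (here the four points $\{3,4,6,7\}$ off the line $\{1,2,5\}$, giving all six edges among them) and the ``triangle plus matching'' configuration $\{\{1,2\},\{1,3\},\{2,3\},\{1,7\},\{2,6\},\{3,5\}\}$---need to be checked to satisfy (ii)' carefully, since they sit near the boundary of what the line-restriction allows.

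The genuine work, and the main obstacle, is establishing completeness: proving that the listed $14$ classes are exhaustive and pairwise non-collinear, rather than merely generalised nice. For exhaustiveness I would argue that every generalised-nice $T\subseteq X$ is a nice set by Proposition~\ref{gnsX}(i), hence collinear to one of the $24$ representatives of \cite[Theorem~3.27]{Paper1}; applying $\widetilde\sigma$ preserves generalised niceness, so the class of $T$ must already appear among those $24$ classes that pass condition (ii)', and one checks directly that exactly these $14$ do. For pairwise non-collinearity the cardinality invariant separates most classes automatically, and within a fixed cardinal I would distinguish representatives by collineation-invariant data: the multiset of vertex degrees (how many edges of $T$ are incident to each point), the number of pairs of edges sharing a vertex versus disjoint, and whether any three points of $T$ form a generative triangle. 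For instance, at cardinal $3$ the five representatives are separated by their degree sequences and by whether the edges form a star through one point, a path, a triangle, or a perfect matching of distinct lines. The delicate part is bookkeeping: ensuring no two superficially different small configurations are secretly collinear (which the invariants above settle) and, conversely, that no generalised-nice subset of $X$ has been overlooked when passing from the full $24$-class list to the $14$ surviving ones.
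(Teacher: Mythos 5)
Your proposal is correct and follows essentially the same route as the paper: invoke Proposition~\ref{gnsX} to reduce to nice sets, take the 24 collineation classes from \cite[Theorem~3.27]{Paper1}, and discard those violating condition (ii), the classes being automatically pairwise non-collinear since they come from that classification. Your additional degree-sequence invariants for distinguishing classes are harmless but unnecessary, since the 24 representatives are already known to be mutually non-collinear (and collinearity in $X$ agrees with collinearity in $X_0$, as the paper notes).
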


\begin{proof}
  The list $\{T_i:i=1\dots24\}$ in \cite[Theorem 3.27]{Paper1} exhibits all the nice sets up to collineations. (Two subsets of $X$ are collinear   if and only if they are so as subsets of $X_0$.) 
Now we have only to check which of these sets satisfy the condition (ii) in Proposition~\ref{gnsX}. 
Simple inspection of the elements tells us that   $T_{2}$, $T_{3}$, $T_{5}$, $T_{7}$, $T_{8}$,
$T_{10}$, $T_{11}$, $T_{12}$, $T_{15}$, $T_{16}$, $T_{18}$, $T_{19}=X_{\ell_{12}^c}$, and $T_{21}=P_{\{1,2,3\}}$ 
are the only non-empty nice sets which are also generalised nice sets. 
 \end{proof}


\section{Generalised nice sets having an empty intersection with $X$} \label{TintXempty}


In this section, we determine all the generalised nice sets (up to collineations) that have an empty intersection with $X$. 
We begin by noticing   a trivial, but useful, fact.

\begin{lemma} \label{erratillas}
If $T$ is a generalised nice set such that $T\cap X = \emptyset$, there is no $i,j\in I$ distinct such that $\{0,i \} ,\{j,j \} \in T$. 
\end{lemma}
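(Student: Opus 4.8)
The plan is to argue by contradiction, feeding the two hypothesised pairs into the absorbing property of Definition~\ref{defGNS}. Suppose, for a generalised nice set $T$ with $T \cap X = \emptyset$, that there are distinct $i, j \in I$ with $\{0, i\} \in T$ and $\{j, j\} \in T$. The key observation is that $j \ast j = 0$ in $I_0$, so $\{j, j\}$ has the shape $\{a, b\}$ with $a \ast b = 0$, and $\{0, i\}$ then has the shape $\{a \ast b, c\}$ with $c = i$. This is exactly the configuration that triggers the defining implication.

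First I would apply Definition~\ref{defGNS} to the pairs $\{j, j\}$ and $\{0, i\}$ to deduce $P_{\{j, j, i\}} \subseteq T$. Next I would read off the explicit description of this set from Remark~\ref{re_tipos}: interchanging the roles of $i$ and $j$ in the formula $P_{\{i, i, j\}} = \big\{\{i, i\}, \{i, j\}, \{j, 0\}, \{i, i \ast j\}\big\}$ gives $P_{\{j, j, i\}} = \big\{\{j, j\}, \{i, j\}, \{i, 0\}, \{j, i \ast j\}\big\}$. In particular, $\{i, j\} \in P_{\{j, j, i\}} \subseteq T$.

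The contradiction is then immediate: since $i$ and $j$ are distinct elements of $I$, the pair $\{i, j\}$ is an edge, so $\{i, j\} \in X$, and hence $\{i, j\} \in T \cap X$, against the assumption $T \cap X = \emptyset$.

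There is essentially no hard step here; the lemma is the \lq\lq trivial but useful fact\rq\rq\ advertised in the text. The only point requiring a moment's care is the bookkeeping of the $\ast$-operation on $I_0$, namely recognising that the self-pair $\{j, j\}$ combines with a pair through $0$ precisely because $j \ast j = 0$, and then checking that the resulting $P$-set genuinely contains the off-diagonal, nonzero edge $\{i, j\}$ rather than only diagonal or $0$-incident pairs. Once that is verified, the emptiness of $T \cap X$ closes the argument.
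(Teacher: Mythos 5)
Your proof is correct and follows exactly the paper's own argument: feed $\{j,j\}$ and $\{0,i\}=\{j\ast j, i\}$ into the absorbing property to get $P_{\{j,j,i\}}\subseteq T$, then note $\{i,j\}\in P_{\{j,j,i\}}\cap X$, contradicting $T\cap X=\emptyset$. The only difference is that you spell out the bookkeeping that the paper compresses into one line (the paper also mentions an alternative deduction from Proposition~\ref{charTintXnotgns}, which you did not need).
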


\begin{proof}
This is clear since $\{i,j\} \in P_{\{j,j,i\}}\cap X.$ (Alternatively, it is an immediate consequence of (iii)$\Rightarrow$(iv) in Proposition~\ref{charTintXnotgns}, since $\emptyset$ is a generalised nice set.)
\end{proof}

\begin{prop} \label{way} 
If $T$ is a  generalised nice set such that $T\cap X = \emptyset$, then either
\begin{itemize}
\item[\rm(a)] there is $i\in I$ with $T=P_{\{ 0,i,i\}}=  \big\{ \{ 0,i\},\{i,i \},\{ 0,0\} \big\}$;
\item[\rm(b)] or $\big\{  \{ 0,0\} \big\}\subseteq T\subseteq X_F=\big\{  \{ 0,i\} :i\in I_0\big\}$;
\item[\rm(c)] or  $  T\subseteq X_E-\big\{  \{ 0,0\} \big\}=\big\{  \{ i,i\} :i\in I\big\}$.
\end{itemize}
Moreover, any set $T \subseteq  X_0-X$ satisfying any of the situations described in {\rm(a)}, {\rm(b)} or {\rm(c)} is a generalised nice set.
\end{prop}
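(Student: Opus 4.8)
The plan is to establish the two implications separately: first that every generalised nice set $T$ with $T \cap X = \emptyset$ takes one of the three prescribed forms, and then the converse \emph{Moreover} assertion. Since $T \cap X = \emptyset$ forces $T \subseteq X_0 - X = X_E \cup X_F$, I would encode $T$ by the two index sets $A := \{i \in I : \{0, i\} \in T\}$ and $B := \{i \in I : \{i, i\} \in T\}$, together with the single extra piece of information of whether $\{0, 0\} \in T$. Every such $T$ is determined by this data, so the classification reduces to deciding which combinations of $A$, $B$, and the membership of $\{0,0\}$ can actually occur.

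For the forward implication the two structural inputs are Lemma~\ref{erratillas} and Lemma~\ref{trivialfacts}(ii). Lemma~\ref{erratillas} says that $A$ and $B$ cannot simultaneously contain two distinct indices, and this cleanly drives the case split. If $B = \emptyset$, then $T \subseteq X_F$; here Lemma~\ref{trivialfacts}(ii) shows that any element $\{0, i\} \in T$ drags $\{0,0\}$ into $T$, so $T$ is either of the form $\{\{0,0\}\} \subseteq T \subseteq X_F$ (case (b)) or empty (the degenerate instance of (c)). If $B \neq \emptyset$, Lemma~\ref{erratillas} forces $A \subseteq \{j\}$ for every $j \in B$, so either $A = \emptyset$, or $A = B = \{j\}$ for a single shared index $j$; in the latter situation Lemma~\ref{trivialfacts}(ii) supplies $\{0,0\} \in T$, and as no further pairs are available we get exactly $T = P_{\{0,j,j\}}$, which is case (a). The remaining subcase $A = \emptyset$, $B \neq \emptyset$ carries the one genuinely computational step of this direction: were $\{0,0\}$ to lie in $T$, then applying the absorbing property to the pair $\{j,j\}$ (any $j \in B$) and $\{0,0\}$ would yield $P_{\{0,j,j\}} \subseteq T$ and hence $\{0, j\} \in T$, contradicting $A = \emptyset$; thus $\{0,0\} \notin T$ and $T \subseteq X_E - \{\{0,0\}\}$, which is case (c).

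For the converse I would verify the absorbing property of Definition~\ref{defGNS} in each of (a), (b), (c). Case (a) is immediate from Remark~\ref{re_PnoesGNS}, where $P_{\{0,i,i\}} = \langle P_{\{i,i,i\}} \rangle$ is already observed to be generalised nice. Case (c) holds vacuously: every pair of $T$ is of the form $\{i,i\}$ with $i \in I$, so $i \ast i = 0$, and triggering the absorbing property would require a second pair $\{0, c\} \in T$, but no element of $X_E - \{\{0,0\}\}$ contains $0$, so the premise never fires. Case (b) is the only one needing a short direct check: if $\{a,b\}$ and $\{a \ast b, c\}$ both lie in $T \subseteq X_F$, then $\{a, b\} = \{0, x\}$ (so $a \ast b = x$) and $\{x, c\} \in X_F$, which forces $x = 0$ or $c = 0$; in either configuration the set $P_{\{0,x,c\}}$ to be absorbed reduces by Remark~\ref{re_tipos} to $\{\{0,0\}\}$ or $\{\{0,x\}, \{0,0\}\}$, and these lie in $T$ since $\{0,0\} \in T$ and the remaining pair is one of the two we started with.

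I expect the main obstacle to be the bookkeeping in the forward direction when $B \neq \emptyset$: one must squeeze from Lemma~\ref{erratillas} the collapse of $A$ to at most one shared index, and correctly handle the interaction between a diagonal pair $\{j,j\}$ and $\{0,0\}$, since this is precisely what separates case (a) from case (c) and excludes spurious mixed configurations. By contrast the converse is routine, as (a) and (c) require essentially no computation and (b) only the short reduction above.
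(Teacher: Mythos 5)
Your proposal is correct and takes essentially the same approach as the paper: both arguments hinge on Lemma~\ref{erratillas} together with Lemma~\ref{trivialfacts}(ii) to produce the same trichotomy (a common index forcing $T=P_{\{0,i,i\}}$, $T\subseteq X_F$ forcing $\{0,0\}\in T$, and $T\subseteq X_E$ excluding $\{0,0\}$), and your converse checks ((a) via Remark~\ref{re_PnoesGNS}, (b) via $P_{\{0,0,i\}}$, (c) vacuously) coincide with the paper's. Your $A$/$B$ encoding is merely a notational repackaging of the paper's case split, not a different method.
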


\begin{proof}
First, it is clear that all these sets are generalised nice. 
In case (b), for each $\{ 0,i\} \in T$,  we must have $P_{\{ 0,0,i\}}=  \{ \{ 0,i\},\{ 0,0\} \} \subseteq  T$.
 In case (c) there is nothing to check because there are no $a,b,c$ with $\{ a,b\} ,\{ a\ast b,c\} \in T$ since  $\{ 0,c\} \notin T$.
 
 Second, assume $\emptyset\ne T \subseteq  X_0-X$ is generalised nice.
 By Lemma~\ref{erratillas}, either  there is $i\in I$ with $\{ 0,i\},\{ i,i\} \in T$, or  $T\subseteq X_F=\big\{  \{ 0,i\} :i\in I_0\big\}$ or $T\subseteq X_E=\big\{  \{ i,i\} :i\in I_0\big\}$. In the first situation $\{ 0,0\} \in P_{\{ 0,i,i\}}  \subseteq  T$, and then $P_{\{ 0,i,i\}}  = T$ because any  element in $T-P_{\{ 0,i,i\}}$ would be either $\{ 0,j\}$ with $j\ne i$ (a contradiction with $\{ i,i\}\in T$) or  $\{ j,j\}$ with $j\ne i$ (a contradiction with $\{ i,0\}\in T$).
 If $T\subseteq X_F$, 
  either $ \{ 0,0\} \in T$ or there is $i\in I$ with $ \{ 0,i\}=\{ i,0\} \in T$. 
  Either way, $ \{ 0,0\} \in  T$, since $\{ 0,0\} \in P_{\{ 0,i,0\}} $.
 If $T\subseteq X_E$,  note   that $ \{i,i\} ,\{0,0\} \in T$ would imply $ \{ 0,i\} \in P_{\{ i,i,0\}}\subseteq T$, another contradiction. So, if
 $\big\{  \{ 0,0\} \big\}\ne T\subseteq \big\{  \{ i,i\} :i\in I_0\big\}$,   
 we can rest assured  that $ \{ 0,0\} \notin T$.
\end{proof}

 The above proposition  describes all the generalised nice sets; now we need only be cautious to avoid collinear repetitions.
  First, $P_{\{ 0,i,i\}}=\big\{\{0,0\}, \{0,i\}, \{i, i\}\big\}\sim_c P_{\{ 0,1,1\}}$ for any $i\in I$. Second, if $J,J'\subseteq I$, 
  then the generalised nice sets
  $\big\{  \{ 0,0\} ,\{ 0,i\} :i\in J\big\}$ and  $\big\{  \{ 0,0\} ,\{ 0,i\} :i\in J'\big\}$, are collinear if, and only if, $J$ and $J'$ are so. This means that we have 
  one possibility for each cardinality $|J|=0,1,2,5,6$ or $7$ (for instance, choosing 5 indices from $I$ is equivalent to choosing the remaining two). The other two possibilities 	
  are  $|J|=3 $ or $4$. 
  If $|J|=3$, we must distinguish between whether the three elements in $J$ form a line or not. Similarly, if $|J|=4$ the complementary set, $I-J,$ could be a line or not.
   This leaves 10  different possibilities wherein $\big\{  \{ 0,0\} \big\}\subseteq T\subseteq X_F$.
   Finally the discussion for $\big\{   \{ i,i\} :i\in J\big\}$ with $J$ a subset of $I$, is exactly the same. 
   That set is collinear to $\big\{   \{ i,i\} :i\in J'\big\}$ if, and only if, $J$ and $J'$ are collinear. 
   Hence, the same possibilities arise as before, that is, 10 with $T\subseteq X_E$, including the empty set. 
   In total, we have 21   generalised nice sets up to collineations, which do not intersect $X$. These are 
  compiled in the table below.
  \smallskip

 \begin{longtable} [c] { | c | c | }
    \hline
    \multicolumn{2} { | c | }{{\bf Generalised nice sets $T$ such that $T \cap X = \emptyset$}} \\
    \hline
    $|T|$ & All possible $T$s \\
    \hline
    \endfirsthead
    \hline
    \hline
    \endhead
    \hline
    \endfoot
    \hline
    \endlastfoot
    0 & $\emptyset$ \\
    \hline
    1 & $\big\{ \{0, 0\}\big\}$ \\
    & $\big\{ \{1, 1\}\big\}$ \\
    \hline 
    2 & $\big\{ \{0, 0\}, \{0, 1\} \big\}$ \\
      & $\big\{ \{1, 1\}, \{2, 2\} \big\}$  \\
    \hline
    3 & $\big\{ \{0, 0\}, \{0, 1\}, \{0, 2\}\big\}$ \\ 
      & $\big\{ \{0, 0\}, \{0, 1\}, \{1, 1\} \big\}$ \\
      & $\big\{ \{1, 1\}, \{2, 2\}, \{3, 3\}\big\}$ \\ 
      & $\big\{ \{1, 1\}, \{2, 2\}, \{5, 5\}\big\}$ \\
      \hline 
    4 & $\big\{ \{0, 0\}, \{0, 1\}, \{0, 2\}, \{0, 3\}\big\}$ \\ 
      & $\big\{ \{0, 0\}, \{0, 1\}, \{0, 2\}, \{0, 5\}\big\}$ \\
      & $\big\{ \{1, 1\}, \{2, 2\}, \{3, 3\}, \{4, 4\} \big\}$\\
      & $\big\{ \{1, 1\}, \{2, 2\}, \{3, 3\}, \{5, 5\} \big\}$ \\
      \hline 
    5 & $\big\{ \{0, 0\}, \{0, 1\}, \{0, 2\}, \{0, 3\}, \{0, 4\}\big\}$\\
      & $\big\{ \{0, 0\}, \{0, 1\}, \{0, 2\}, \{0, 3\}, \{0, 5\}\big\}$ \\
      & $\big\{ \{1, 1\}, \{2, 2\}, \{3, 3\}, \{4, 4\}, \{5, 5\} \big\}$ \\
    \hline
    6 & $\big\{ \{0, 0\}, \{0, 1\}, \{0, 2\}, \{0, 3\}, \{0, 4\}, \{0, 5\}\big\}$ \\ 
      & $\big\{ \{1, 1\}, \{2, 2\}, \{3, 3\}, \{4, 4\}, \{5, 5\}, \{6, 6\} \big\}$ \\  
    \hline 
    7 & $\big\{ \{0, i\} \mid i = 0, \ldots, 6\big \}$ \\ 
      & $\big\{ \{i, i\} \mid i = 1, \ldots, 7 \big\}$ \\  
  \hline
    8 & $X_F=\big\{ \{0, i\} \mid i = 0, \ldots, 7\big \}$ \\ 
    \hline
    \caption{\label{gnsEmptyX} Generalised nice sets $T$ with $T \cap X = \emptyset$}
      \end{longtable}


\section{Generalised nice sets whose intersection with $X$\\ is not generalised   nice.}\label{partI}


Here, we focus our attention on the generalised nice sets $T$ such that  $ T \cap X$ is not generalised nice
 (in particular $\emptyset \neq T\cap X$  and $T\not\subseteq X$). Proposition \ref{charTintXnotgns} tells us that this is equivalent to say that $\langle P_{\{i, j, i\}} \rangle \subseteq T$ for some distinct $i, j \in I$. Without loss of generality, we can assume that $i = 1$ and $j = 2$, 
that is,
$$
\langle P_{\{1,2,1\}} \rangle = \big\{\{0, 0\}, \{0, 1\}, \{0, 2\}, \{0, 5\}, \{1,1\}, \{1, 2\}, \{1, 5\} \big\} \subseteq  T.
$$ 
As one realises later, it is quite useful to investigate what happens when we add elements from $X_0$ to $T$. 

\begin{lemma} \label{TintXiijj}
Let $T$ be a generalised nice set   such that  $\langle P_{\{1, 2, 1\}} \rangle \subseteq  T$. 
If there exists   
 $\{i, j\} \in X^{(1)}=\big\{\{2, 5\}, \{3, 6\}, \{4, 7\}  \big\}$ such that $T \cap \big\{\{i, i\}, \{j, j\}, \{i, j\}\big\} \neq \emptyset$, 
 then $\big\{\{i, i\}, \{j, j\}, \{i, j\}\big\} \subseteq T$.
\end{lemma}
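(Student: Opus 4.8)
The plan is to exploit the two elements that are guaranteed to lie in $T$ by the hypothesis $\langle P_{\{1,2,1\}} \rangle \subseteq T$, namely $\{1,1\}$ and $\{0,1\}$, and to combine each of them with the candidate elements $\{i,i\}$, $\{j,j\}$, $\{i,j\}$ through the absorbing property of Definition~\ref{defGNS}. First I would record the arithmetic that drives everything: for every $\{i,j\} \in X^{(1)}$ one has $i \ast j = 1$, so $1, i, j$ are collinear; consequently $i \ast 1 = j$ and $j \ast 1 = i$, while of course $i \ast i = j \ast j = 0$. These identities are exactly what make the triples chosen below degenerate in the right way, so that the sets $P$ produced contain precisely the elements I am after.

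The heart of the argument is three short implications, each an instance of Definition~\ref{defGNS} read together with the explicit forms of $P$ collected in Remark~\ref{re_tipos}. If $\{i,j\} \in T$, then since $i \ast j = 1$ and $\{1,1\} \in T$, applying the defining property to the pair $\{i,j\}, \{i \ast j, 1\} = \{1,1\}$ forces $P_{\{i,j,1\}} \subseteq T$; as $1 = i \ast j$, this is a set of the type $P_{\{i,j,i\ast j\}}$, which contains both $\{i,i\}$ and $\{j,j\}$, so all three elements are in $T$. If instead $\{i,i\} \in T$, I would pair it with $\{0,1\} \in T$: since $i \ast i = 0$, the property applied to $\{i,i\}, \{i \ast i, 1\} = \{0,1\}$ forces $P_{\{i,i,1\}} \subseteq T$, and this set (of the type $P_{\{i,i,j\}}$) contains $\{i, i \ast 1\} = \{i,j\}$; one then invokes the first implication to recover the remaining two elements. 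The case $\{j,j\} \in T$ is identical after interchanging the roles of $i$ and $j$, using $j \ast 1 = i$. Since each of the three hypotheses $\{i,i\}, \{j,j\}, \{i,j\} \in T$ leads to all three lying in $T$, the assumption $T \cap \big\{\{i,i\}, \{j,j\}, \{i,j\}\big\} \neq \emptyset$ yields the stated conclusion.

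I do not expect a genuine obstacle: the statement reduces to selecting the correct triple in each of the three cases. The only point that requires care is the bookkeeping around the degenerate shapes of $P_{\{a,b,c\}}$; one must use the explicit descriptions of Remark~\ref{re_tipos} rather than the generic six-element formula \eqref{eq_defP}, and must keep track of which collinearity relation ($i \ast 1 = j$, and so on) is being invoked, so that the element manufactured is exactly $\{i,j\}$ and not some unrelated edge. That $\{1,1\}$ and $\{0,1\}$ truly belong to $\langle P_{\{1,2,1\}} \rangle$ is immediate from the displayed description of that set preceding the lemma.
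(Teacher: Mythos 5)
Your proposal is correct and follows essentially the same route as the paper's proof: the paper likewise pairs $\{i,i\}$ with $\{0,1\}$ to obtain $P_{\{i,i,1\}}\subseteq T$ (hence $\{i,i*1\}=\{i,j\}\in T$), and pairs $\{i,j\}$ with $\{1,1\}=\{i*j,1\}$ to obtain $P_{\{i,j,1\}}\subseteq T$ (hence $\{i,i\},\{j,j\}\in T$), with the $\{j,j\}$ case implicit by symmetry. Your additional care about the degenerate forms of $P_{\{a,b,c\}}$ from Remark~\ref{re_tipos} matches exactly what the paper's argument relies on.
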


\begin{proof}
Suppose first that $\{i, i\} \in T$. Then from $\{0, 1\} \in T$ we obtain that $P_{\{i, i, 1\}} \subseteq T$
  and so $\{i, i \ast 1\} = \{i, j\} \in T$. Assume now that $\{i, j\} \in T$, then from $\{1, 1\} \in T$ we get that $P_{\{i, j, 1\}} \subseteq T$, and so $\{i, i\}, \{j, j\} \in T$.
\end{proof}

\begin{lemma} \label{TintXLemma00}
Let $T$ be a generalised nice set such that $\langle P_{\{1, 2, 1\}} \rangle \cup \big\{\{1,k\} \big\} \subseteq T$ 
  for some $k=3,4,6,7$. Then,
\begin{enumerate}
\item[\rm (i)]  $|T \cap X| \geq 4$ holds. More precisely, $\big\{\{1, 2\}, \{1, k\}, \{1, 5\}, \{1, k*1\} \big\} \subseteq T$.
\item[\rm (ii)] If $\{2, 5\} \in T$, then $\{2, k\} \in T$.
\end{enumerate}
\end{lemma}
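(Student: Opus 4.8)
The plan is to exploit the absorbing property of Definition~\ref{defGNS} directly, invoking it only with pairs already guaranteed to lie in $T$. Recall that $\langle P_{\{1,2,1\}}\rangle$ supplies $\{0,0\},\{0,1\},\{0,2\},\{0,5\},\{1,1\},\{1,2\},\{1,5\}$, and the hypothesis adds $\{1,k\}$. Throughout I would use that $\ast$ is commutative and that for each $k\in\{3,4,6,7\}$ the points $1,k,k\ast 1$ are collinear, so that $1\ast k=k\ast 1$ and $1\ast(k\ast 1)=k$.

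For part (i), three of the four claimed pairs, namely $\{1,2\}$, $\{1,5\}$ and $\{1,k\}$, already lie in $T$, so the entire content is to produce $\{1,k\ast 1\}$. I would obtain it by two applications of the defining property. First, since $0\ast 1=1$, the pair $\{0,1\},\{1,k\}\in T$ forces $P_{\{0,1,k\}}\subseteq T$; expanding this set (and collapsing $k\ast 0=k$, $0\ast 1=1$) gives $P_{\{0,1,k\}}=\big\{\{0,1\},\{1,k\},\{0,k\},\{0,k\ast 1\}\big\}$, so in particular $\{0,k\ast 1\}\in T$. Second, since $1\ast 1=0$, the pair $\{1,1\},\{0,k\ast 1\}\in T$ forces $P_{\{1,1,k\ast 1\}}\subseteq T$; expanding (using $1\ast(k\ast 1)=k$) yields $\big\{\{1,1\},\{1,k\ast 1\},\{1,k\},\{0,k\ast 1\}\big\}$, which contains $\{1,k\ast 1\}$. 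Finally I would note that $2,k,5,k\ast 1$ are pairwise distinct and all differ from $1$ (for instance $2,3,5,6$ when $k=3$), so the four pairs $\{1,2\},\{1,k\},\{1,5\},\{1,k\ast 1\}$ are distinct and $|T\cap X|\ge 4$.

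For part (ii), assuming $\{2,5\}\in T$, I would observe $2\ast 5=1$, so the pair $\{2,5\},\{1,k\}\in T$ (the latter from the hypothesis) triggers $P_{\{2,5,k\}}\subseteq T$. Since $\{2,k\}=\{k,2\}\in P_{\{2,5,k\}}$, this immediately gives $\{2,k\}\in T$; no intermediate step is needed here.

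The only genuine difficulty is bookkeeping: because several indices are $0$ or repeated, the sets $P_{\{i,j,k\}}$ collapse to fewer than six pairs (cf.\ Remark~\ref{re_tipos}), so one must check that the extracted pair actually survives the collapse, and one must invoke the property with the correct designation of which stored pair plays the role of $\{i,j\}$ and which plays $\{i\ast j,k\}$. Choosing $\{0,1\}$ (resp.\ $\{1,1\}$, resp.\ $\{2,5\}$) as the first pair is precisely what makes $i\ast j$ coincide with the first coordinate of the second stored pair, which is what makes the absorbing condition fire.
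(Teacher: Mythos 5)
Your proof is correct and follows essentially the same route as the paper: the same two applications of the absorbing property for (i) (first $\{0,1\},\{1,k\}$ giving $P_{\{0,1,k\}}$ and hence $\{0,k\ast 1\}$, then $\{1,1\},\{0,k\ast 1\}$ giving $P_{\{1,1,k\ast 1\}}$ and hence $\{1,k\ast 1\}$), and the same single application of $\{2,5\},\{1,k\}$ giving $P_{\{2,5,k\}}$ for (ii). Your additional bookkeeping (explicit expansion of the collapsed $P$-sets and the distinctness check of the four pairs) is sound and merely makes explicit what the paper leaves implicit.
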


\begin{proof}
 From $\{0, 1\} \in T$, we get that $P_{\{0, 1, k\}} \subseteq T$; in particular $\{0, k*1\} \in T$. 
 From $\{1, 1\}, \{0, k*1\}$ we get that $P_{\{1, 1, k*1\}} \subseteq T$ and so $\{1, k*1\} \in T$. 
 This proves (i).
Also, from $\{2, 5\} ,\{1, k\} \in T$     we derive that $P_{\{2, 5, k\}} \subseteq T$  and so $\{2, k\} \in T$.
\end{proof}

\begin{lemma} \label{TinTX0306}
Let $T$ be a generalised nice set such that $\langle P_{\{1, 2, 1\}} \rangle \subseteq T$. 
 If $T$ contains either $\{0, k\}$ or $\{1, k\}\ (k\ne1)$, then $\{\{0, k\}, \{0, k*1\}, \{1, k\}, \{1, k*1\}\} \subseteq T.$
\end{lemma}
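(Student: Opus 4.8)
The plan is to reduce to the genuinely non-trivial values of $k$ and then propagate the four required pairs by applying the absorbing property of Definition~\ref{defGNS} to a short list of triples, using the pairs $\{0,0\},\{0,1\},\{1,1\}$ supplied by $\langle P_{\{1,2,1\}}\rangle$ as catalysts.

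First I would note that for $k\in\{0,2,5\}$ the four pairs $\{0,k\},\{0,k\ast1\},\{1,k\},\{1,k\ast1\}$ already belong to $\langle P_{\{1,2,1\}}\rangle$: these are exactly the values of $k$ lying on the line $\ell_{12}=\{1,2,5\}$ together with $k=0$, and a one-line check (for instance $k=2$ gives $k\ast1=5$ and the pairs $\{0,2\},\{0,5\},\{1,2\},\{1,5\}$) settles them irrespective of the hypothesis. It therefore remains to treat $k\in\{3,4,6,7\}$, i.e. $k\notin\ell_{12}$, and I would split on which of $\{0,k\},\{1,k\}$ is assumed to lie in $T$.

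The two triples that do all the work are, after the degenerate entries collapse (using $1\ast1=0$ and the collinearity of $1,k,k\ast1$, whence $1\ast(k\ast1)=k$),
\[
P_{\{1,1,k\}}=\big\{\{1,1\},\{1,k\},\{1,k\ast1\},\{0,k\}\big\},\qquad
P_{\{0,1,k\}}=\big\{\{0,1\},\{1,k\},\{0,k\},\{0,k\ast1\}\big\}.
\]
If $\{0,k\}\in T$, then $\{1,1\}$ and $\{0,k\}=\{1\ast1,k\}$ force $P_{\{1,1,k\}}\subseteq T$, delivering $\{1,k\}$ and $\{1,k\ast1\}$; pairing $\{0,1\}$ (whose product is $0\ast1=1$) with the newly found $\{1,k\}$ then forces $P_{\{0,1,k\}}\subseteq T$, delivering the last pair $\{0,k\ast1\}$. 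If instead $\{1,k\}\in T$, the hypothesis of Lemma~\ref{TintXLemma00} is met, so $\{1,k\ast1\}\in T$ for free, while $\{0,1\}$ and $\{1,k\}$ again force $P_{\{0,1,k\}}\subseteq T$ and hence $\{0,k\}$ and $\{0,k\ast1\}$. In both subcases all four pairs land in $T$.

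The main obstacle is purely bookkeeping: one must check that each invocation of Definition~\ref{defGNS} is triggered by two pairs \emph{already} known to lie in $T$ — never by a pair still to be proved — and that the $\ast$-arithmetic makes the six-element sets $P_{\{1,1,k\}}$ and $P_{\{0,1,k\}}$ collapse to the four-element sets above (the identities $1\ast(k\ast1)=(k\ast1)\ast1=k$ together with the coincidence of duplicated pairs). This ordering constraint is exactly why the derivations in the two subcases proceed through the triples in different orders.
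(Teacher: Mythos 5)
Your proof is correct and follows essentially the same route as the paper's: reduce to $k\in\{3,4,6,7\}$ (the cases $k\in\{0,2,5\}$ being contained in $\langle P_{\{1,2,1\}}\rangle$), then in the case $\{0,k\}\in T$ fire $P_{\{1,1,k\}}$ via $\{1,1\},\{0,k\}$ and then $P_{\{0,1,k\}}$, and in the case $\{1,k\}\in T$ invoke Lemma~\ref{TintXLemma00}(i) for $\{1,k*1\}$ and fire $P_{\{0,1,k\}}$ via $\{0,1\},\{1,k\}$. The only deviation is cosmetic (the paper triggers the second absorption in the first case from $\{0,k\},\{k,1\}$ rather than $\{0,1\},\{1,k\}$, but both yield the same set $P_{\{0,1,k\}}$), and your explicit collapse of the $P$-sets matches Remark~\ref{re_tipos}.
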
 

\begin{proof} We can assume $k=3,4,6,7$ (if $k=2,5$ there is nothing to prove).
Suppose first that $\{0, k\} \in T$. Then, from $\{1, 1\} \in T$ we get that $P_{\{1, 1, k\}} \subseteq T$ 
and so $\{1, k\}, \{1, k*1\} \in T$. Lastly, from $\{0, k\}, \{k, 1\} \in T$  we obtain that 
$P_{\{0, k, 1\}} \subseteq T$, which implies that $\{0, k*1\} \in T$. 
Second, assume  that   $\{1, k\} \in T$. We have that $\{1, k\}, \{1, k*1\} \in T$ by Lemma~\ref{TintXLemma00}(i). Then from $\{0, 1\}, \{1, k\} \in T$  we get that 
$P_{\{0, 1, k\}} \subseteq T$, and so  both $  \{0, k\} , \{0, k*1\}  \in T$. 
\end{proof}

\begin{prop} \label{TintXprop}
The following assertions hold for a generalised nice set $T$ such that $\langle P_{\{1, 2, 1\}} \rangle \subseteq T$. 
Take $j\in\{2,5\}$, $k\in\{3,4,6,7\}$.
\begin{enumerate}
\item[\rm (i)] If $\{j,k\} \in T$, then $T_{ (1, j,k) } \subseteq T \cap X$.
\item[\rm (ii)] If $\{j*k,k\} \in T$,  then $T_{ (1, j*k,k) } \subseteq T \cap X$. 
\end{enumerate}
\end{prop}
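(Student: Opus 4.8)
The plan is to prove both parts by the same two–stage argument and to exploit at the outset that every set $T_{(1,a,b)}$ (defined in \eqref{eq_losTes}) is contained in $X$; hence proving $T_{(1,a,b)}\subseteq T$ is the same as proving $T_{(1,a,b)}\subseteq T\cap X$. Throughout I would keep in mind that $\{0,2\},\{0,5\},\{1,2\},\{1,5\}\in\langle P_{\{1,2,1\}}\rangle\subseteq T$. For (i) I will take $(a,b)=(j,k)$ and for (ii) I will take $(a,b)=(j\ast k,k)$; the only thing distinguishing the two cases is how the stage-one hypotheses are produced.

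Stage one is to show $X_{(1)}\subseteq T$, i.e. that all six edges through $1$ lie in $T$. For (i): since $j\in\{2,5\}$ we have $\{0,j\}\in T$, and the defining property of generalised nice sets (Definition~\ref{defGNS}) applied to $\{0,j\},\{0\ast j,k\}=\{0,j\},\{j,k\}$ gives $P_{\{0,j,k\}}\subseteq T$; by Remark~\ref{re_tipos} this puts $\{0,k\}$ and $\{0,j\ast k\}$ in $T$, and two applications of Lemma~\ref{TinTX0306} then place $\{1,k\},\{1,k\ast1\},\{1,j\ast k\},\{1,(j\ast k)\ast1\}$ in $T$. For (ii), writing $a=j\ast k\in\{3,4,6,7\}$, I would use $\{1,a\ast k\}=\{1,(j\ast k)\ast k\}=\{1,j\}\in T$: applying the defining property to $\{a,k\},\{a\ast k,1\}$ yields $P_{\{1,a,k\}}\subseteq T$, hence $\{1,a\},\{1,k\}\in T$, and Lemma~\ref{TintXLemma00}(i) supplies $\{1,a\ast1\},\{1,k\ast1\}$. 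The crux of this stage, and the step I expect to require the most care, is verifying that the four edges so produced together with $\{1,2\},\{1,5\}$ exhaust $X_{(1)}$. Equivalently, the pairs $\{k,k\ast1\}$ and $\{j\ast k,(j\ast k)\ast1\}$ must be the two \emph{distinct} lines through $1$ meeting $\{3,4,6,7\}$, namely $\{3,6\}$ and $\{4,7\}$. They are distinct because $k$ and $j\ast k$ cannot be collinear with $1$: if they were, we would get $k\ast(j\ast k)=1$, whereas $k\ast(j\ast k)=j\in\{2,5\}$, a contradiction. This purely combinatorial identity is the heart of the proof.

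Stage two is to produce the four remaining edges of
$$
T_{(1,a,b)}=X_{(1)}\cup\big\{\{1\ast b,1\ast a\},\{a,1\ast b\},\{a,b\},\{b,1\ast a\}\big\}.
$$
The edge $\{a,b\}$ is exactly the hypothesis. Since $\{1,a\ast b\}\in X_{(1)}\subseteq T$, applying the defining property to $\{a,b\},\{a\ast b,1\}$ gives $P_{\{1,a,b\}}\subseteq T$, which contains $\{a,1\ast b\}$ and $\{b,1\ast a\}$. Finally, because $1\ast(1\ast a)=a$, applying it to $\{1,1\ast a\},\{a,b\}$ gives $P_{\{1,1\ast a,b\}}\subseteq T$, whose element $\{1\ast a,1\ast b\}$ is the last edge needed. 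Here one checks once more that $1,a,b$ and $1,1\ast a,b$ are generative — using again that $a$ and $b$ lie on distinct lines through $1$ — so that the relevant sets $P_{\{\cdots\}}$ are genuine six-element subsets of $X$ rather than one of the degenerate forms in Remark~\ref{re_tipos}. Collecting all these elements gives $T_{(1,a,b)}\subseteq T$, and since $T_{(1,a,b)}\subseteq X$ this is $T_{(1,a,b)}\subseteq T\cap X$, completing both (i) and (ii).

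In summary, the routine part is the repeated bookkeeping with the absorbing property and the two auxiliary lemmas; the single nontrivial idea is the collinearity bookkeeping that forces the generated edges through $1$ to fill out all of $X_{(1)}$, which reduces to the identity $k\ast(j\ast k)=j\ne1$.
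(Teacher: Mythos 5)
Your proof is correct: I checked each application of Definition~\ref{defGNS}, the uses of Lemmas~\ref{TintXLemma00} and~\ref{TinTX0306}, and the collinearity bookkeeping — in particular the key identity $k\ast(j\ast k)=j\neq 1$, which indeed forces $\{k,k\ast 1\}$ and $\{j\ast k,(j\ast k)\ast 1\}$ to be the two distinct sets $\{3,6\}$ and $\{4,7\}$, so that your generated edges really fill $X_{(1)}$. However, your route is genuinely different from the paper's. The paper never leaves $X$ and never touches the $0$-pairs: it uses the decomposition $T_{(1,j,k)}=P_{\{1,j,k\}}\cup P_{\{1,j,k\ast 1\}}\cup P_{\{1,k,j\ast 1\}}$ from \eqref{eq_losTes} and obtains each $P$-set by a single application of the defining property, pairing the hypothesis edge (and its immediate consequences) with $\{1,j\}$ or $\{1,j\ast 1\}$: for (i), $\{1,j\ast 1\},\{j,k\}$ give $P_{\{1,j\ast 1,k\}}$, hence $\{k,j\ast 1\},\{j\ast 1,k\ast 1\}\in T$; then $\{1,j\},\{j\ast 1,k\ast 1\}$ give $P_{\{1,j,k\ast 1\}}$, and $\{1,j\},\{j\ast 1,k\}$ give $P_{\{1,j,k\}}$; part (ii) follows the same three-step pattern. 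You instead use the other decomposition in \eqref{eq_losTes}, namely $T_{(1,a,b)}=X_{(1)}$ plus four extra edges, and detour through the $0$-pairs $\{0,j\},\{0,k\},\{0,j\ast k\}$ and the auxiliary lemmas to fill $X_{(1)}$ first, harvesting the remaining edges afterwards. What your approach buys: a uniform treatment of (i) and (ii) via the parameter $(a,b)$, and it isolates the intermediate fact $X_{(1)}\subseteq T$ explicitly. What the paper's approach buys: brevity and self-containment — three applications of the defining property per item, no reliance on the $0$-pair lemmas, and no need for the distinct-lines argument, which only becomes necessary because of your detour through $X_{(1)}$.
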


\begin{proof}
(i) Let us begin by recalling that  $\{1, j\},   \{1, j*1\}\in P_{\{1, 2, 1\}}\subseteq T$. 
From $\{1, j*1\}, \{j, k\} \in T$, we derive that $P_{\{1, j*1, k\}} \subseteq T$, and so $ \{k, j*1\}, \{j*1, k*1\} \in T$. 
Now, from $\{1, j\}, \{j*1, k*1\} \in T$ we get that $P_{\{1, j, k*1\}} \subseteq T$, and,
 from $\{1, j\}, \{j*1, k\} \in T,$ we obtain that $P_{\{1, j, k\}} \subseteq T$. 
 Altogether, we see $T_{ (1, j, k) } = P_{\{1, j, k\}} \cup P_{\{1, j, k*1\}} \cup P_{\{1, k, j*1\}}  \subseteq   T$.

\smallskip

\noindent (ii)  From $ \{j*k, k\},\{j,1\} \in T$, we derive that $P_{\{j*k, k, 1\}} \subseteq T$, and so both $  \{j*k, k*1\} \in T$ and $  \{j*k*1, k\} \in T$. 
With the elements $  \{j*k, k*1\},\{j*1,1\}  \in T$, we get  $P_{\{j*k, k*1, 1\}} \subseteq T$,  
and with $  \{j*k*1, k\} ,\{j*1,1\}\in T$, we obtain $P_{\{j*k*1, k, 1\}} \subseteq T$ too. 
These show that $T_{ (1, j*k,k) } \subseteq T \cap X$, as required. 
\end{proof}

\begin{cor} \label{TintX33}
Let $T$ be a generalised nice set such that $\langle P_{\{1, 2, 1\}} \rangle \subseteq T$. If $\{k,k\} \in T$, 
for $k\in\{3,4,6,7\}$, then $\{k, k*2\} \in T$. Moreover, $T \cap X$ is either $X$ or   $X - X_{\ell^C_{1k}}$. 
\end{cor}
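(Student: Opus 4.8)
The statement splits into two essentially independent claims, and the first is immediate. Since $\{0,2\}\in\langle P_{\{1,2,1\}}\rangle\subseteq T$ and $\{k,k\}\in T$, the generalised nice condition applied to the pairs $\{k,k\}$ and $\{k\ast k,2\}=\{0,2\}$ forces $P_{\{k,k,2\}}=\big\{\{k,k\},\{k,2\},\{0,2\},\{k,k\ast2\}\big\}\subseteq T$ (this is the $P_{\{i,i,j\}}$ pattern of Remark~\ref{re_tipos}), so in particular $\{k,k\ast2\}\in T$. I would record at once the companion facts obtained by repeating this with $\{0,1\}$ and $\{0,5\}$ in place of $\{0,2\}$: namely $\{1,k\},\{k,k\ast1\},\{2,k\},\{5,k\},\{k,k\ast5\}\in T$ as well.

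For the second claim, write $k':=1\ast k$, so that $\ell_{1k}=\{1,k,k'\}$ and $X-X_{\ell_{1k}^C}=X_{(1)}\cup X_{(k)}\cup X_{(k')}$ is precisely the set of edges meeting $\ell_{1k}$. The plan is to establish a dichotomy in two steps: (A) $X-X_{\ell_{1k}^C}\subseteq T\cap X$ always holds; and (B) if $T$ contains even a single edge disjoint from $\ell_{1k}$, then $T\cap X=X$. These suffice, since in the remaining case every edge of $T\cap X$ meets $\ell_{1k}$, i.e.\ $T\cap X\subseteq X-X_{\ell_{1k}^C}$, forcing equality.

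For Step (A) I would first collect all three diagonals on $\ell_{1k}$. The edge $\{k,k'\}$ from the first paragraph lies in $X^{(1)}$, because its third point is $k\ast k'=1$; hence Lemma~\ref{TintXiijj} promotes $\{k,k\}\in T$ to $\{k,k\},\{k',k'\},\{k,k'\}\subseteq T$, and $\{1,1\}\in\langle P_{\{1,2,1\}}\rangle$ completes the triple. Next I would show $X_{(k)}\subseteq T$: the neighbours of $k$ already reached are $1,k',2,k\ast2,5,k\ast5$, and since $k\notin\ell_{12}=\{1,2,5\}$ a direct check shows $\{2,5,k\ast2,k\ast5\}$ is exactly the four-point complement of $\ell_{1k}$, so these six neighbours exhaust $I\setminus\{k\}$. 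The identical argument applied to $k'$ gives $X_{(k')}\subseteq T$. Finally, for each point $r\notin\ell_{1k}$ I would trigger the generalised nice condition on $\{1,k\}\in T$ and $\{k',r\}\in X_{(k')}\subseteq T$ (using $1\ast k=k'$), obtaining $P_{\{1,k,r\}}\ni\{1,r\}$; thus $X_{(1)}\subseteq T$ as well, which completes Step (A).

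For Step (B) I would exploit that the complement of $\ell_{1k}$ is a quadrilateral: any two of its four points $a,b$ satisfy $a\ast b\in\ell_{1k}$. Given $\{a,b\}\in T$ with $a,b\notin\ell_{1k}$ and any third complement point $d$, the edge $\{a\ast b,d\}$ meets $\ell_{1k}$ and so lies in $X-X_{\ell_{1k}^C}\subseteq T$ by Step (A); triggering on $\{a,b\}$ and $\{a\ast b,d\}$ yields $P_{\{a,b,d\}}\supseteq\big\{\{a,d\},\{b,d\}\big\}$. Running this for the two complement points other than $a,b$, and once more to recover the final edge, produces all six edges of the quadrilateral, whence $T\cap X=X$. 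I expect Step (A) to be the main obstacle — concretely, the bookkeeping that the spokes from $k$ (and from $k'$) through $\ast1,\ast2,\ast5$ reach every neighbour. This is exactly where the position of $\ell_{1k}$ relative to the fixed line $\ell_{12}=\{1,2,5\}$ enters, and it is what lets the whole argument run uniformly over $k\in\{3,4,6,7\}$ rather than as four separate cases.
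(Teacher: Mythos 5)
Your proof is correct, and its first step coincides with the paper's: both trigger $\{k,k\}$ against $\{0,2\}\in\langle P_{\{1,2,1\}}\rangle$ to get $P_{\{k,k,2\}}\subseteq T$, hence $\{k,k\ast 2\}\in T$. Where you genuinely diverge is in the dichotomy. The paper obtains $X - X_{\ell^C_{1k}}\subseteq T\cap X$ by invoking Proposition~\ref{TintXprop} (so that $T_{(1,2,k)}\cup T_{(1,k,k\ast2)}\cup T_{(1,5,k)}\cup T_{(1,k,k\ast5)}\subseteq T\cap X$) plus one extra trigger yielding $\{k,k\ast1\}$, and then finishes in a single line by citing the classification of nice sets in \cite[Theorem 3.27]{Paper1}: since $T\cap X$ is nice and contains the $15$-element nice set $X-X_{\ell^C_{1k}}$, any strict containment forces $T\cap X=X$, that being the only larger nice set. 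You instead prove both halves by hand: your Step (A) rebuilds $X_{(1)}\cup X_{(k)}\cup X_{(k')}\subseteq T$ from the companion triggers $P_{\{k,k,1\}}$, $P_{\{k,k,5\}}$, Lemma~\ref{TintXiijj} (to promote $\{k,k\}$ to $\{k',k'\}$), and a counting argument resting on $k,k'\notin\ell_{12}$; your Step (B) shows directly that a single edge inside the complementary quadrilateral propagates, via Step (A) and three further triggers, to all six of its edges. Both arguments are sound; the paper's is shorter because it leans on existing machinery (Proposition~\ref{TintXprop} and the external classification theorem), while yours is self-contained at the price of more bookkeeping — it needs only Lemma~\ref{TintXiijj} and the definition, and as a by-product it re-proves, rather than imports, the maximality of the $15$-element nice set $X - X_{\ell^C_{1k}}$ among proper nice sets.
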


\begin{proof}
From $\{k,k\}, \{0, 2\} \in T$ we derive that $P_{\{k, k, 2\}} \subseteq T$ and so $\{2, k\},\{k, k*2\}  \in T$. 
Now Proposition~\ref{TintXprop} tells us that $T_{ (1, 2, k) } \cup T_{ (1, k, k*2) } \subseteq T \cap X$. 
From here, $\{k, 2\},\{k*2, k*1\}  \in T$, which implies $P_{\{k, 2, k*1\}}\subseteq T$ and $\{k, k*1\}  \in T$.
The above argument can be repeated by replacing the index 2 with the index 5. 
Thus $X - X_{\ell^C_{1k}}=T_{ (1, 2, k) }\cup T_{ (1, 5, k) }\cup T_{ (1, k, k*2) } \cup T_{ (1, k, k*5) } \cup \big\{\{k, k*1\} \big\} \subseteq T \cap X$.
If this containment is strict, 
  \cite[Theorem 3.27]{Paper1} says that $T \cap X=X$, the only nice set with more than 15 elements.
  \end{proof}

\begin{theorem} \label{TintXthm}
The following hold for a generalised nice set $T$ such that $\langle P_{\{1, 2, 1\}} \rangle \subseteq T$:
\begin{enumerate}
\item[\rm (i)] If $|T \cap X_E| = 2$, then $|T \cap X| \in \{2, 4, 10, 15\}$. More precisely:
\begin{enumerate}
\item[-] if $|T \cap X| = 2$, then $T \cap X = \big\{\{1, 2\}, \{1, 5\} \big\};$
\item[-] if $|T \cap X| = 4$, then $T \cap X=\big\{\{1, 2\}, \{1, k\}, \{1, 5\}, \{1, k*1\} \big\}$ for some   $k=3,4,6,7$;
\item[-] if $|T \cap X| = 6$, then $T \cap X=X_{(1)}=\big\{\{1, 2\}, \{1, 3\}, \{1, 4\}, \{1, 5\}, \{1, 6\}, \{1, 7\} \big\};$
\item[-] if $|T \cap X| = 10$, then $T \cap X$ equals either $T_{(1,j,k)}$ or $T_{(1,j*k,k)}$ for some $j=2,5$ and $k=3,4,6,7.$
\end{enumerate}
\item[\rm (ii)] If $|T \cap X_E| > 2$, then $|T \cap X| \in \{3, 15, 21\}$. More concretely:
\begin{enumerate}
\item[-] if $|T \cap X| = 3$, then $T \cap X = X_{\ell_{12}}$; 
\item[-] if $|T \cap X| = 15$, then 
there is $ s\in I-\{1\}$ such that  $T \cap X=X - X_{\ell^C_{1s}}$;
\item[-] if $|T \cap X| = 21$, then $T \cap X = X$.
\end{enumerate}
\end{enumerate}
\end{theorem}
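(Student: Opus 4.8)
The plan is to handle parts (i) and (ii) separately, exploiting throughout that $\{0,0\},\{1,1\}\in\langle P_{\{1,2,1\}}\rangle\subseteq T$, so that $T\cap X_E\supseteq\{\{0,0\},\{1,1\}\}$ (whence the two cases $|T\cap X_E|=2$ and $|T\cap X_E|>2$ are exhaustive) and, by Lemma~\ref{trivialfacts}(i), $S:=T\cap X$ is a \emph{nice} set containing $\{1,2\}$ and $\{1,5\}$. The backbone is the classification in \cite[Theorem~3.27]{Paper1}: every nice set has cardinality in $\{0,1,\dots,6,10,15,21\}$, the only ones of cardinality $10$ are the $T_{(i,j,k)}$, the only ones of cardinality $15$ are the $X-X_{\ell^C}$, and $X$ is the unique one of cardinality $21$. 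Each step reduces to deciding, via Proposition~\ref{TintXprop} and the earlier lemmas, which edges of $S$ are forced, and then reading off the admissible nice set.

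For part (i) we have $T\cap X_E=\{\{0,0\},\{1,1\}\}$, so $\{i,i\}\notin T$ for every $i\in I\setminus\{1\}$; applying Lemma~\ref{TintXiijj} to each $\{i,j\}\in X^{(1)}=\{\{2,5\},\{3,6\},\{4,7\}\}$ then gives $S\cap X^{(1)}=\emptyset$. I organise the analysis by $S\cap X_{(1)}$. By Lemma~\ref{TintXLemma00}(i) the edges $\{1,k\}$ ($k\in\{3,4,6,7\}$) enter $S$ only in the pairs $\{\{1,3\},\{1,6\}\}$ and $\{\{1,4\},\{1,7\}\}$, so $|S\cap X_{(1)}|\in\{2,4,6\}$. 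The key point is that any edge of $S$ lying outside $X_{(1)}\cup X^{(1)}$ is either of the form $\{j,k\}$ with $j\in\{2,5\}$, $k\in\{3,4,6,7\}$ (treated by Proposition~\ref{TintXprop}(i)) or an edge inside $\{3,4,6,7\}$, which can be written as $\{j*k,k\}$ (treated by Proposition~\ref{TintXprop}(ii)); in either case it forces some $T_{(1,\cdot,\cdot)}\supseteq X_{(1)}$ into $S$, and hence $|S\cap X_{(1)}|=6$. Therefore, when $|S\cap X_{(1)}|$ equals $2$ or $4$ no further edge can occur and $S=S\cap X_{(1)}$, which is the first two forms; and when $X_{(1)}\subseteq S$, the set $S$ is a nice set containing $X_{(1)}$ and meeting $X^{(1)}$ trivially, so by the classification it is either $X_{(1)}$ itself or one of the three cardinality-$10$ sets $T_{(1,j,k)}$ / $T_{(1,j*k,k)}$ associated with the pairs of lines through $1$ (the cardinality $15$ and $21$ candidates each contain an $X^{(1)}$ edge and are excluded). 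This yields the four forms listed in (i).

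For part (ii) some $\{s,s\}\in T$ with $s\in I\setminus\{1\}$. If $s\in\{3,4,6,7\}$, Corollary~\ref{TintX33} at once gives $S\in\{X-X_{\ell^C_{1s}},X\}$. Otherwise the extra diagonal is $\{2,2\}$ or $\{5,5\}$, and Lemma~\ref{TintXiijj} forces $\{2,2\},\{5,5\},\{2,5\}\in T$, whence $X_{\ell_{12}}\subseteq S$; the same lemma rules out $\{3,6\},\{4,7\}$ from $S$ (these would reintroduce the excluded diagonals $\{3,3\},\{6,6\},\{4,4\},\{7,7\}$). If $S=X_{\ell_{12}}$ we obtain the cardinality-$3$ form. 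If instead $S\supsetneq X_{\ell_{12}}$, then picking any extra edge and arguing exactly as in part (i) forces some $T_{(1,\cdot,\cdot)}\subseteq S$; since $\{2,5\}\in S$ but $\{2,5\}$ lies in no $T_{(1,\cdot,\cdot)}$, we get $S\supsetneq T_{(1,\cdot,\cdot)}\supseteq X_{(1)}$, so $|S|\ge 15$ and the classification leaves only $S=X-X_{\ell^C_{1s}}$ (the containment $X_{(1)}\subseteq S$ forcing $\ell$ through $1$) or $S=X$. This gives the three forms in (ii).

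The main obstacle I expect is the bookkeeping in these forcing steps: matching each admissible extra edge to the correct instance of Proposition~\ref{TintXprop}(i) or (ii), and checking that the resulting $T_{(1,\cdot,\cdot)}$ really contains $X_{(1)}$ while omitting the diagnostic edge (an $X^{(1)}$ edge in part (i), and $\{2,5\}$ in part (ii)). Keeping the $\ast$-products on the Fano plane straight, and confirming that no nice set has cardinality strictly between $10$ and $15$ (so that the jump from $|S|\ge 11$ to $|S|\ge 15$ is valid), is where the care lies; everything else is a direct application of the quoted lemmas.
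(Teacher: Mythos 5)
Your part (i) and Case 1 of part (ii) (a diagonal $\{k,k\}$ with $k\in\{3,4,6,7\}$, dispatched by Corollary~\ref{TintX33}) are correct and essentially the paper's own argument; your exclusion of the cardinality-$15$ and $21$ nice sets in part (i) via the fact that each contains an edge of $X^{(1)}$ is a legitimate variant of the paper's diagonal-forcing argument. The problem is in Case 2 of part (ii), at the step ``picking any extra edge and arguing exactly as in part (i) forces some $T_{(1,\cdot,\cdot)}\subseteq S$''. The extra edge of $S$ beyond $X_{\ell_{12}}$ need not lie outside $X_{(1)}\cup X^{(1)}$: after you exclude $\{3,6\}$ and $\{4,7\}$, it can still be $\{1,k\}$ with $k\in\{3,4,6,7\}$, and for such an edge the part (i) machinery forces only its partner $\{1,k*1\}$, never a set $T_{(1,\cdot,\cdot)}$. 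Indeed part (i) itself admits generalised nice sets such as $\langle P_{\{1,2,1\}}\rangle\cup\big\{\{0,3\},\{0,6\},\{1,3\},\{1,6\}\big\}$, whose intersection with $X$ is $\big\{\{1,2\},\{1,5\},\{1,3\},\{1,6\}\big\}$ and contains no $T_{(1,\cdot,\cdot)}$. So, as written, your argument does not rule out candidates like $X_{\ell_{12}}\cup\big\{\{1,3\},\{1,6\}\big\}$ or $X_{\ell_{12}}\cup X_{(1)}$, and the claimed forcing is simply false for this type of edge.

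The missing ingredient is the interaction between $\{2,5\}$ and $\{1,k\}$, which is available only in part (ii) and is exactly what the paper's Lemma~\ref{TintXLemma00}(ii) (case (c) of its proof) supplies: since $2*5=1$, the pairs $\{2,5\},\{1,k\}\in T$ force $P_{\{2,5,k\}}\subseteq T$, hence $\{2,k\}\in T$; this is an edge of type $\{j,k\}$ with $j\in\{2,5\}$, $k\in\{3,4,6,7\}$, so Proposition~\ref{TintXprop}(i) now yields $T_{(1,2,k)}\subseteq S$ and your count $|S|\ge 11$ goes through. (The paper must also treat a case (d) for the edges $\{1*k,k\}=\{3,6\},\{4,7\}$, which you correctly dispose of beforehand via Lemma~\ref{TintXiijj}, so that part of your streamlining is fine.) With this one additional forcing step inserted, your proof closes and coincides in substance with the paper's.
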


\begin{proof}
Let us begin by recalling that the possible  cardinals for non-empty nice sets are $ \{1, 2, 3, 4, 5, 6, 10, 15, 21\}$ by \cite[Theorem 3.27]{Paper1}. By Lemma~\ref{trivialfacts}(i),
$T\cap X$ is nice. 

\smallskip

\noindent (i) If $|T \cap X_E| = 2$, then $T \cap X_E = \big\{\{0, 0\}, \{1, 1\}\big\}$ since 
$\langle P_{\{1, 2, 1\}} \rangle \subseteq T$. 
In this case $\big\{\{2, 5\},\{3, 6\},\{4, 7\}\big\} \cap T=\emptyset$, taking into consideration  Lemma~\ref{TintXiijj}. 
This tells us that $T \cap X\ne X$. 

 First, let us check that the only possibilities for $|T \cap X| \le 9$ are $2$, $4,$ and $6$. 
We know   $\{1, 2\}, \{1, 5\} \in T$, so that $T \cap X$ has at least 2 elements.   
For any $j=2,5$, and any $k=3,4,6,7$, we know that $\{j,k\} \notin T$   because of Proposition~\ref{TintXprop}(i).
Similarly,   $\{j*k,k\} \notin T$ by Proposition~\ref{TintXprop}(ii).  Therefore, the only possible elements in $T \cap X$, different from $\{1, 2\}$ and $\{1, 5\}$, are   
$\{1, k\}\in T$ for some $k=3,4,6,7$. If $\{1, k\}\in T$, then   $\big\{\{1, 2\}, \{1, k\}, \{1, 5\}, \{1, k*1\} \big\} \subseteq T$
by Lemma~\ref{TintXLemma00}(i). 
If, moreover,  there is another element in $T \cap X$ besides those 4, it must   be $\{1, k'\}$ ($k'\in\{3,4,6,7\}$),  so that  again 
Lemma~\ref{TintXLemma00}(i) applies to get $X_{(1)} \subseteq  T \cap X$. 
These sets are necessarily equal, since there are no more elements in $X$ to add. 

Second, consider the case with $|T \cap X| =10$, that is, there   exists $\{i,j,k \}$ generative
with $T \cap X=T_{(i,j,k)}$. The fact that 
$\{1, 2\},\{1, 5\}\in T_{(i,j,k)}$  forces $i=1$. The possibilities for $j,k$ (interchangeable) with $j*k\ne1$   follow trivially. 

Third, if $|T \cap X| =15$, then $T \cap X=X - X_{\ell^C_{ij}}$ for some $i,j\in I$,  $i\ne j$. 
This set contains the elements $\{i,l\}, \{j,l\}$ and $\{i*j,l\}$, for any $l\in I$. In particular,
 $\{i,j\},\{i,i*j\},\{j,i*j\}\in T$, so that $P_{\{j, i, j\}} \subseteq  T$, which yields $\{j,j\}\in T\cap X_E$, and analogously $\{i,i\},\{i*j,i*j\}\in T\cap X_E$. This contradicts the fact   $|T\cap X_E|=2$.
\smallskip

\noindent (ii) Suppose now that $|T \cap X_E| > 2$. We distinguish two cases:

- Case 1: $T \cap X_E$  contains $\{k, k\}$ for some $k=3,4,6,7$. Then   Corollary~\ref{TintX33} applies to get that $T \cap X$  equals either $X$ or   $X - X_{\ell^C_{1k}}$.  
\smallskip

- Case 2: $T \cap X_E$ does not contain any element $\{k, k\}$ for $k=3,4,6,7$. So it contains either $\{2, 2\}$ or $\{5, 5\}$. Moreover,
it contains both of them by Lemma~\ref{TintXiijj}, and besides $\{2, 5\} \in T \cap X$.
If $|T \cap X| = 3$, then $|T \cap X| = \big\{\{1, 2\}, \{1, 5\}, \{2, 5\}\big\} = X_{\ell_{12}}$. Otherwise, let  
$|T \cap X| > 3$. Then there is $j=2,5$ and $k=3,4,6,7$ such that either $\{j, k\}$ or $\{j*k, k\}$ or $\{1, k\}$ or $\{1*k, k\}$ belongs to $T$ (all the  elements in $X-X_{\ell_{12}}$ take one of these forms). We claim that in any of these cases, $|T \cap X| \ge 11$. Indeed,
\begin{enumerate}
\item[(a)] For $\{j, k\}\in T $, Proposition~\ref{TintXprop}(i) tells us that $T_{ (1, j,k) } \subseteq T \cap X$.
The result is clear since $\{2, 5\} \notin T_{ (1, j,k )}$.
\item[(b)] For $\{j*k, k\}\in T $, again Proposition~\ref{TintXprop}(ii) tells us that $T_{ (1, j*k,k )} \subseteq T \cap X$  and  again
 $\{2, 5\} \notin T_{ (1, j*k,k ) }$.
 \item[(c)] For $\{1, k\}\in T$,  we find   $\{2, k\} \in T$ by Lemma~\ref{TintXLemma00}(ii) and then we apply item (a).
 \item[(d)] For $\{k*1, k\}\in T$, we use $\{1, 2\}\in T$ to get $P_{\{k*1,k,2\}}\subseteq T$, so we again obtain $\{2, k\} \in T$.
\end{enumerate}
As $T\cap X\ne X$ (since $\langle X\rangle=X_0$ contains $X_E$), there are $i,s$   with $T \cap X=X - X_{\ell^C_{i,s}}$. 
As in the proof of item (i), the three elements $\{i,i\},\{s, s\},\{i*s,i*s\}\in T\cap X_E$, so that $\{i,s,i*s\}=\{1,2,5\}$ and $T \cap X=X - X_{\ell^C_{1,2}}$.
\end{proof}

 
\begin{cor} \label{TintXcor}
Let $T$ be a generalised nice set such that $\langle P_{\{1, 2, 1\}} \rangle \subseteq T$. Then there is $\sigma\in S_*(I_0)$ with 
$\tilde\sigma(\langle P_{\{1, 2, 1\}} \rangle)=\langle P_{\{1, 2, 1\}} \rangle $ such that 
$\tilde\sigma(T \cap X)$ equals one of the following nice sets: 
\begin{itemize}
\item[-] $\big\{\{1, 2\}, \{1, 5\}\big\};$
\item[-] $X_{\ell_{12}}=\big\{\{1, 2\}, \{1, 5\}, \{2, 5\}\big\};$
\item[-] $\big\{\{1, 2\}, \{1, 3\}, \{1, 5\}, \{1, 6\}\big\}$;
\item[-] $X_{(1)}=\{\{1, l\}:l=2,\dots,7\};$
\item[-] either $T_{ (1, 2, 3) } $ or $T_{ (1, 3,4) } $;
\item[-] either $X - X_{\ell^C_{12}} $ or $X - X_{\ell^C_{13}} $;
\item[-] $X.$
\end{itemize}
\end{cor}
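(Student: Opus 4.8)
The plan is to read off the full list of possibilities for $T \cap X$ directly from Theorem~\ref{TintXthm} and then collapse the parameters appearing there by acting with the subgroup
\[
H := \{\sigma \in S_\ast(I_0) : \widetilde{\sigma}(\langle P_{\{1,2,1\}}\rangle) = \langle P_{\{1,2,1\}}\rangle\}
\]
of collineations fixing $\langle P_{\{1,2,1\}}\rangle$ setwise, so that the corollary becomes an orbit count for $H$ acting on the nice sets enumerated in Theorem~\ref{TintXthm}. The first step is to pin down $H$. Since $\{1,1\}$ is the only pair $\{i,i\}$ with $i \neq 0$ lying in $\langle P_{\{1,2,1\}}\rangle$, any $\sigma \in H$ must fix $1$; and since $\{0,2\},\{0,5\}$ are the only pairs $\{0,i\}$ with $i \notin \{0,1\}$ in $\langle P_{\{1,2,1\}}\rangle$, it must permute $\{2,5\}$. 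Conversely any such $\sigma$ preserves the whole set, so $H$ consists exactly of the collineations fixing $1$ and preserving the line $\ell_{12} = \{1,2,5\}$ setwise.

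Next I would describe $H$ concretely via the parametrisation $\sigma = \sigma_{1jk}$. An element of $H$ is determined by $\sigma(2) \in \{2,5\}$ together with $\sigma(3) \in \{3,4,6,7\} = I - \ell_{12}$, the remaining values being forced; hence $|H| = 2 \cdot 4 = 8$. The structural facts I will extract are: $H$ fixes $\ell_{12}$ and permutes the two remaining lines through $1$, namely $\ell_{13} = \{1,3,6\}$ and $\ell_{14} = \{1,4,7\}$; moreover $H$ already acts transitively on $\{3,4,6,7\}$ while keeping $\sigma(2) = 2$, since each value of $\sigma(3)$ is attained. I will record explicit elements, e.g. $\sigma_{124}$ (which swaps $3 \leftrightarrow 4$ and $6 \leftrightarrow 7$, hence interchanges $\ell_{13}$ and $\ell_{14}$), $\sigma_{126}$ (which swaps $3 \leftrightarrow 6$, $4 \leftrightarrow 7$), and $\sigma_{153}$ (which swaps $2 \leftrightarrow 5$, $4 \leftrightarrow 7$ and fixes $3,6$), as these suffice to drive the reductions below.

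With $H$ understood, I would run through the cardinalities supplied by Theorem~\ref{TintXthm}, whose parts (i) and (ii) together yield exactly the cardinalities $2,3,4,6,10,15,21$. The sets of cardinal $2$ (namely $\{\{1,2\},\{1,5\}\}$), $3$ ($X_{\ell_{12}}$), $6$ ($X_{(1)}$) and $21$ ($X$) depend only on data fixed by $H$, so each is its own representative. For cardinal $4$ the set $\{\{1,2\},\{1,k\},\{1,5\},\{1,k\ast 1\}\}$ depends only on the line $\ell_{1k} \in \{\ell_{13},\ell_{14}\}$, since $k$ and $k\ast 1$ are the two points of $\ell_{1k}$ off $1$; as $\sigma_{124}$ interchanges these lines, there is a single orbit with representative $\{\{1,2\},\{1,3\},\{1,5\},\{1,6\}\}$. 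For cardinal $15$ the set $X - X_{\ell^C_{1s}}$ depends only on the line $\ell_{1s}$ through $1$; there are three such lines, and since $H$ fixes $\ell_{12}$ but swaps $\ell_{13},\ell_{14}$, they fall into exactly two orbits, with representatives $X - X_{\ell^C_{12}}$ and $X - X_{\ell^C_{13}}$.

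The crux, and the step I expect to be the main obstacle, is the cardinal $10$ case. Here $T \cap X = T_{(1,a,b)}$, which, the first index being fixed to $1$ and the other two interchangeable, is determined by the unordered generative pair $\{a,b\} \subseteq I - \{1\}$ with $a \ast b \neq 1$; I first need the routine remark that distinct such pairs give distinct sets, so that $T_{(1,a,b)} \mapsto \{a,b\}$ is a well-defined $H$-equivariant injection (equivariance is immediate from the definition of $T_{(1,a,b)}$ since $\sigma(1)=1$ for $\sigma \in H$). There are exactly $12$ such pairs, and they split according to whether the pair meets $\ell_{12}$: the $8$ pairs with one point in $\{2,5\}$ are precisely those arising as $T_{(1,j,k)}$, while the $4$ pairs contained in $\{3,4,6,7\}$ are those arising as $T_{(1,j\ast k,k)}$, for $j \in \{2,5\}$, $k \in \{3,4,6,7\}$. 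Since $H$ fixes $\ell_{12}$ setwise it cannot mix these families, giving at least two orbits; the work is to show there are exactly two. Using transitivity of $H$ on $\{3,4,6,7\}$ (with $\sigma(2)=2$) together with the swap $2 \leftrightarrow 5$, one checks $H$ is transitive on the $8$ pairs meeting $\ell_{12}$, with representative $\{2,3\}$, i.e. $T_{(1,2,3)}$; and exhibiting $\sigma_{126}$ (giving $\{3,4\}\mapsto\{6,7\}$) and $\sigma_{153}$ (giving $\{3,4\}\mapsto\{3,7\}$) shows the $4$ pairs inside $\{3,4,6,7\}$ form a single orbit, with representative $\{3,4\}$, i.e. $T_{(1,3,4)}$. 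This produces the two cardinal-$10$ representatives and completes the list. The only genuine care required throughout is bookkeeping with the operation $\ast$, namely verifying the images of the explicit collineations and the $H$-invariance of the two cardinal-$10$ families, which is mechanical given the Fano incidences.
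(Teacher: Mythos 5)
Your overall strategy is the right one (the paper gives no explicit proof of this corollary; it is meant to follow from Theorem~\ref{TintXthm} by acting with the stabiliser $H$ of $\langle P_{\{1,2,1\}}\rangle$, exactly as you set it up), and your identification of $H$, the explicit collineations $\sigma_{124},\sigma_{126},\sigma_{153}$, and the treatment of the cardinalities $2,3,4,6,15,21$ are all correct. However, the ``routine remark'' on which you hang the cardinal-$10$ case is false: distinct generative pairs do \emph{not} give distinct sets $T_{(1,a,b)}$. From \eqref{eq_losTes}, $T_{(1,a,b)}=X_{(1)}\cup\big\{\{a,b\},\{a,1\ast b\},\{b,1\ast a\},\{1\ast a,1\ast b\}\big\}$, and replacing $a$ by $1\ast a$ and/or $b$ by $1\ast b$ reproduces exactly the same four extra edges; for instance $T_{(1,2,3)}=T_{(1,5,6)}=T_{(1,2,6)}=T_{(1,3,5)}$, as one checks directly ($1\ast2=5$, $1\ast3=6$). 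So the map from pairs to sets is $4$-to-$1$, not a bijection: the twelve pairs produce only three distinct sets, namely $T_{(1,2,3)}$, $T_{(1,2,4)}$ and $T_{(1,3,4)}$, and your claimed ``well-defined $H$-equivariant injection'' does not exist.

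Fortunately this error is repairable and does not affect the conclusion, but you should be clear about which parts of your argument survive. The direction the corollary actually requires --- that every cardinal-$10$ possibility from Theorem~\ref{TintXthm} can be carried by some $\sigma\in H$ onto $T_{(1,2,3)}$ or $T_{(1,3,4)}$ --- needs only equivariance and surjectivity of the pair-to-set assignment together with your (correct) transitivity of $H$ on the $8$ pairs meeting $\ell_{12}$ and on the $4$ pairs inside $\{3,4,6,7\}$; no injectivity is used there. What does break is your argument for ``at least two orbits'': the fact that $H$ cannot mix the two families of \emph{pairs} no longer transfers automatically to the \emph{sets}. To recover it (and the paper does assert, in the proof of Theorem~\ref{GnsListThm}, that no $\sigma$ fixing $\langle P_{\{1,2,1\}}\rangle$ sends $T_{(1,2,3)}$ to $T_{(1,3,4)}$), argue instead with an invariant of the set itself: since $\sigma(1)=1$, $\tilde\sigma$ fixes $X_{(1)}$ and hence permutes the four extra edges, whose vertex set is $\{a,b,1\ast a,1\ast b\}$; for $T_{(1,3,4)}$ this vertex set is $\{3,4,6,7\}$, disjoint from $\ell_{12}$, while for $T_{(1,2,3)}$ it is $\{2,3,5,6\}$, which meets $\ell_{12}$, and $H$ preserves $\ell_{12}$. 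With that substitution your proof is complete and agrees with the intended argument of the paper.
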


Keeping this in mind,  at last we can find the generalised nice sets whose intersection with $X$ is not generalised nice.

\begin{theorem} \label{GnsListThm}
There are 8 generalised nice sets $T$ up to collineation such that $T\cap X$ is not a generalised nice set:  
\smallskip

 - Cardinal 7: $\langle P_{\{1, 2, 1\}} \rangle= \big\{ \{0,0\},\{0,l\},\{1,l\}:l=1,2,5\big\}$;

 - Cardinal 10: $\langle P_{\{1, 2, 1\}} \rangle \cup \big\{\{2, 2\}, \{2, 5\}, \{5, 5\} \big\}=\langle P_{\{1, 2, 5\}} \rangle;$ 

- Cardinal 11: $\langle P_{\{1, 2, 1\}} \rangle \cup \big\{\{0, 3\}, \{0, 6\}, \{1, 3\}, \{1, 6\} \big\};$

 - Cardinal 15: $ \big\{ \{0,0\},\{0,l\},\{1,l\}:l=1,\dots, 7\big\}$;
  
 - Cardinal 19: $\big\{ \{0,0\},\{0,l\},\{1,l\}:l=1,\dots, 7\big\}\cup\big\{\{2, 3\}, \{5, 3\}, \{2,6\}, \{5,6\}\big\}$;
 
- Cardinal 26:  $X_0 - \big\{\{k, l\}:k,l\in\{3,4,6,7\}\big\};$

 - Cardinal 36: $X_0.$

\end{theorem}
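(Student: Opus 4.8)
The plan is to start from Corollary~\ref{TintXcor}, which already does the heavy lifting: by Proposition~\ref{charTintXnotgns} the hypothesis that $T\cap X$ is not generalised nice is equivalent to $\langle P_{\{i,j,i\}}\rangle\subseteq T$ for some distinct $i,j\in I$, and after applying a collineation we may assume $\langle P_{\{1,2,1\}}\rangle\subseteq T$. Corollary~\ref{TintXcor} then says that, up to the stabiliser of $\langle P_{\{1,2,1\}}\rangle$, the intersection $T\cap X$ is one of seven nice sets, two of which (those of cardinals $10$ and $15$) come with two sub-options, namely $T_{(1,2,3)}$ or $T_{(1,3,4)}$ and $X-X_{\ell^C_{12}}$ or $X-X_{\ell^C_{13}}$. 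So the remaining work is: (a) reconstruct the whole set $T$ from each admissible $T\cap X$; (b) check each reconstructed $T$ is genuinely generalised nice and is the only one with that intersection; and (c) discard collinear repetitions.

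For steps (a) and (b) I would, for each candidate $T\cap X$, determine $T-X\subseteq X_E\cup X_F$ by pushing the absorbing property. Three mechanisms control $T-X$: Lemma~\ref{TinTX0306} (each edge $\{1,k\}\in T\cap X$ drags $\{0,k\},\{0,k*1\}$ into $T$, so as soon as $X_{(1)}\subseteq T\cap X$ we get all of $X_F$); Lemma~\ref{TintXiijj} (if an edge of $X^{(1)}=\{\{2,5\},\{3,6\},\{4,7\}\}$ lies in $T$ then its endpoints contribute diagonals to $T$); and the $P_{\{s,s,m\}}$-trigger, i.e. condition~(ii) of Proposition~\ref{gnsX}: a diagonal $\{s,s\}$ is forced exactly when $T\cap X$ contains two edges $\{s,m\},\{s,s*m\}$ lying on a common line through $s$. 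Running this line-incidence bookkeeping over each $T\cap X$ pins down the diagonal part precisely, and one checks conversely that no further element of $X_E\cup X_F$ can be adjoined without the $P_{\{s,s,c\}}$-trigger dragging a new edge into $T\cap X$; hence $T$ is uniquely determined by $T\cap X$ and, being closed, is generalised nice. This produces the full sets of cardinals $7,10,11,15,19,26$ and $36$, where the two cardinal-$10$ intersections both yield cardinal $19$ and the two cardinal-$15$ intersections both yield cardinal $26$.

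For step (c) the classes whose cardinal occurs once ($7,10,11,15,36$) are settled immediately, so the only identifications to make are between the two cardinal-$19$ sets and between the two cardinal-$26$ sets. For the former I would exhibit the collineation $\pi=(2\,3\,4)(5\,6\,7)$: it fixes $0$ and $1$, hence fixes $\{0,0\},\{1,1\}$ and $X_F$, it cyclically permutes the three lines through $1$, and a direct check gives $\widetilde{\pi}(T_{(1,2,3)})=T_{(1,3,4)}$, so the two full sets are collinear. For the latter, any collineation carrying the line $\ell_{12}=\{1,2,5\}$ to $\ell_{13}=\{1,3,6\}$ sends the four-point complement $\{3,4,6,7\}$ of $\ell_{12}$ to the complement $\{2,4,5,7\}$ of $\ell_{13}$, and therefore sends $X_0-\{\{k,l\}:k,l\in\{3,4,6,7\}\}$ onto $X_0-\{\{k,l\}:k,l\in\{2,4,5,7\}\}$; such a collineation exists because the collineation group is transitive on lines. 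After these collapses the surviving representatives are precisely the sets displayed in the statement, distinguished pairwise by their cardinalities.

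I expect step~(a)/(b) to be the main obstacle: the determination of exactly which diagonal pairs $\{s,s\}$ belong to $T$ is sensitive to the incidence pattern of $T\cap X$, and it must be complemented by the non-obvious verification that the minimal candidate is already closed and admits no optional extra elements of $X_E\cup X_F$. The collinearity identifications of step~(c) are comparatively routine once the correct collineations are written down; the only subtlety is that the two merging collineations do not stabilise $\langle P_{\{1,2,1\}}\rangle$, which is precisely why the sub-options appeared as distinct in Corollary~\ref{TintXcor} in the first place.
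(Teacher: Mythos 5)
Your proposal is correct and follows essentially the same route as the paper's proof: reduce via Proposition~\ref{charTintXnotgns} and Corollary~\ref{TintXcor}, reconstruct the forced part $T-X$ case by case from Lemmas~\ref{TintXiijj} and \ref{TinTX0306} together with the $P_{\{s,s,\cdot\}}$ forcing, verify closure, and then merge the two cardinal-$19$ and two cardinal-$26$ sets under collineations that need not stabilise $\langle P_{\{1,2,1\}}\rangle$. Your explicit collineation $(2\,3\,4)(5\,6\,7)$ is a valid alternative to the paper's $\sigma_{143}$, and the remaining bookkeeping coincides with the paper's argument.
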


\begin{proof}
We can assume, as in Theorem~\ref{TintXthm}, that $\langle P_{\{1, 2, 1\}} \rangle \subseteq T$. Then
 the set  $P:= \big \{\{0, 0\}, \{0, 1\}, \{0, 2\},  \{0, 5\},   \{1, 1\} \big\}$ is   contained in $T - X$. 
Recall that $T=  (T \cap  X)\cup (T \cap  X_E)\cup (T \cap  X_F)$.
In what follows, we examine all the possibilities for $T \cap X$ as per in Corollary~\ref{TintXcor}. 
We will repeatedly use  Theorem~\ref{TintXthm} (and its proof) without further mentioning it.

- If $T \cap X = \big\{\{1, 2\}, \{1, 5\}\big\}$, then $|T \cap  X_E|=2$. Also, $\{0, k\}\notin T$ if $k=3,4,6,7$ by Lemma~\ref{TinTX0306}, so that
$T = \langle P_{\{1, 2, 1\}} \rangle$.

-  If $T \cap X =  X_{\ell_{12}}$, then $\{2, 5\} \in T$, which implies $\{2, 2\}, \{5, 5\} \in T$ by Lemma~\ref{TintXiijj}. 
Again, for any $k=3,4,6,7$, $\{0, k\}\notin T$ by Lemma~\ref{TinTX0306} and $\{k, k\}\notin T$  by Corollary~\ref{TintX33}.
This implies $T - X = P \cup \big\{ \{2, 2\}, \{5, 5\} \big\}$, and so $T = \langle P_{\{1, 2, 1\}} \rangle \cup \big\{\{2, 2\}, \{2, 5\}, \{5, 5\} \big\}$.

- If $T \cap X = \big\{\{1, 2\}, \{1, 3\}, \{1, 5\}, \{1, 6\}\big\}$, we know $T \cap  X_E=\big\{\{0, 0\}, \{1,1 \}\big\}$ (by Theorem~\ref{TintXthm}) 
and $\{0, 3\}, \{0, 6\} \in T - X$, $\{0, 4\}, \{0, 7\} \notin T$, by Lemma~\ref{TinTX0306}. 
From here, $T - X = P \cup \big\{\{0, 3\}, \{0, 6\}\big\}$ and $T = \langle P_{\{1, 2, 1\}} \rangle \cup \big\{\{0, 3\}, \{0, 6\}, \{1, 3\}, \{1, 6\}\big\}$. 

- Similarly, if $T \cap X = X_{(1)}$, again $T \cap  X_E=\big\{\{0, 0\}, \{1,1 \}\big\}$ but now $\{0, k\} \in T - X$ for any $k=3,4,6,7$, by Lemma~\ref{TinTX0306}. 
We get $T - X = P \cup \big\{\{0, 3\}, \{0, 4\}, \{0, 6\}$, $\{0, 7\}\big\}$, and so 
$T = X_{(1)}\cup (T-X)= \langle P_{\{1, 2, 1\}} \rangle \cup \big\{\{0, 3\}, \{0, 4\}, \{0, 6\}, \{0, 7\}, \{1, 3\}, \{1, 4\}, \{1, 6\}, \{1, 7\} \big\}$. 

- Next, consider the case $|T \cap X |=10$, corresponding to  $T \cap  X_E=\big\{\{0, 0\}, \{1,1 \}\big\}$.
Considering a convenient collineation, 
we can assume without loss of generality that either $T \cap X = T_{ (1,2,3) }$ or $T \cap X = T_{ (1,3,4) }$.
In any case, 
since $\{1, k\} \in T$ for any $k=3,4,6,7$, then $\{0, k\}\in T$ by Lemma~\ref{TinTX0306}
and so $T-X=X_F\cup \big\{ \{1, 1\}\big\} $. 
We obtain $T = \langle P_{\{1, 2, 1\}} \rangle \cup X_F\cup (T\cap X)$. 
These sets coincide, respectively, with 
\vspace{-2pt}\begin{equation}\label{conjuntouno}
\big\{ \{0,0\},\{0,l\},\{1,l\}:l=1,\dots, 7\big\}\cup\big\{\{2, 3\}, \{5, 3\}, \{2,6\}, \{5,6\}\big\}
\end{equation}
and
\vspace{-2pt}\begin{equation}\label{conjuntodos}
\big\{ \{0,0\},\{0,l\},\{1,l\}:l=1,\dots, 7\big\}\cup\big\{\{3, 4\}, \{4,6\}, \{6,7\}, \{7,3\}\big\}.
\end{equation}
 In spite of the fact that there is no collineation $\sigma$ with $\tilde\sigma(\langle P_{\{1, 2, 1\}} \rangle)=\langle P_{\{1, 2, 1\}} \rangle $ and $\tilde\sigma(T_{ (1,2,3 )})=T_{ (1,3,4) }$, 
the sets in \eqref{conjuntouno} and \eqref{conjuntodos} are clearly collinear via, for instance, $\sigma_{143}$.

- Assume $T \cap X = X - X_{\ell^C_{1k}} $ for some $k\ne 1$. As $\{1, l\}\in T$ for all $l\in I$, Lemma~\ref{TinTX0306} says that $\{0, l\}\in T$ too.
We need to find the elements in  $T \cap X_E$.
We claim that $\{l, l\}\in T$ if $l=0,1,k,k*1$, and $\{l, l\}\notin T$ otherwise.
Indeed, as $\{k, 1\},\{k* 1,k\}\in T\cap X$, then $P_{\{k,1,k\}}\subseteq T$ and $\{k, k\}\in T$. 
Similarly, $\{k*1, k\ast 1\}\in T$ (as above, since $\{k*1, 1\},\{k* 1,k\}\in T\cap X$).
  If we further assume there is $l\in I$, $l\ne 1,k,k*1$  with $\{l, l\}\in T$, we will reach a contradiction. In fact, since $\{0, 1\}\in T$ we have $P_{\{l,l,1\}}\subseteq T$, so that 
$\{l, l*1\}\in T\cap X$. This forces either $l$ or $l*1$ to belong to $\ell_{1k}=\{1,k,k*1\}$, which is our contradiction. 
We conclude that the only elements in $X_0-T$ are those of the form $\{l, l'\} $ with $l,l'\ne 1,k,k*1$ ($l'$ possibly equal to $l$). 
It is clear that this set is collinear to the one provided by a different choice of $k$.

-  If $T \cap X = X$, then $\{2, 5\},  \{3, 6\}, \{4, 7\} \in T$, which implies that $X_E\subseteq T $ by Lemma~\ref{TintXiijj}.
Also $\{1, k\} \in T$ for any $k=3,4,6,7$, which implies  $\{0, k\}\in T$ by Lemma~\ref{TinTX0306}. 
Hence $T = X_0$.\smallskip

To finish, we have to prove that all the  sets in the statement are generalised nice. Since $T\cap X$    is nice and $\{0, 0\}\in T$, it is enough to check  the following conditions: 
\begin{itemize}
\item[$-$] If $  \{i, i\} \in T$, then $  \{0, i\}  \in T$;
\item[$-$] If $  \{0, i\}, \{i, j\} \in T$, then $  \{0, j\}, \{0, i* j\} \in T$;
\item[$-$] If $  \{0, i\}, \{j, j\} \in T$, then $  \{i, j\}, \{j, i* j\} \in T$;
\item[$-$] If $  \{i, i*j\}, \{i, j\} \in T$, then $  \{i, i\}, \{0, j\} \in T$;
\item[$-$] If $  \{i*j, i*j\}, \{i, j\} \in T$, then $  \{i, i\}, \{j, j\}, \{j, i* j\} ,\{i, i* j\} \in T$;
\end{itemize}
for any distinct indices $i,j\in I$. A direct verification is enough    to convince us   that all our sets fulfill these conditions.
%
%
%
%
\end{proof}



\section{Generalised nice sets whose   intersection with $X$ is   generalised.}\label{partII} 


Given $T \subseteq X_0$ generalised nice, we write $T$ as before $T = (T \cap X) \cup (T - X)$. In this section, we study the last case, namely, having both $T \cap X$ and $T - X$ non-empty and generalised nice (see Proposition~\ref{charTintXnotgns}). Notice that $T \cap X$ is a nice set by Lemma~\ref{trivialfacts}(i), and so (up to collineations) it is one of the nice sets listed in 
Corollary~\ref{somegns}. 
Regarding $T - X,$ since it clearly has empty intersection with $X$, it must be collinear to one of the generalised nice sets displayed in Table \ref{gnsEmptyX}. At first, one might think that this is simply a matter of putting together the puzzle pieces. Unfortunately, it is not as easy as it sounds since not every possible 
combination produces a generalised nice set. Let us illustrate this situation with an example:  consider $\big\{\{1, 1\}, \{2, 2\}\big\}$
and $\big\{\{2, 5\}, \{3, 6\}, \{4, 7\}\big\}$. The union of these two sets is not generalised nice, since the resulting set $T$ does not contain the set $P_{\{2, 5, 1\}}$.
\smallskip

We begin by proving   necessary conditions for $T - X$ to be generalised nice:

\begin{lemma} \label{condition}
Let $T$ be a generalised nice set such that $T - X$ is generalised nice. Then there are no $i, j \in I$ distinct satisfying   $\{i, i\}, \{j, i \ast j\} \in T$.  
\end{lemma}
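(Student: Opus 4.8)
The plan is to argue by contradiction: assume $T$ and $T-X$ are both generalised nice, yet there exist distinct $i,j\in I$ with $\{i,i\},\{j,i\ast j\}\in T$. Write $k:=i\ast j$; since $i\ne j$ are points of $I$, the element $k$ is the third point of the line $\ell_{ij}$, so $i,j,k$ are pairwise distinct in $I$ and $j\ast k=i$, $i\ast k=j$. The goal is to extract, purely from the generalised niceness of $T$, an element $\{0,j\}$ of $X_F$ living alongside the element $\{i,i\}$ of $X_E$. Once both $\{i,i\}$ and $\{0,j\}$ are shown to lie in $T-X$, the configuration is forbidden by Lemma~\ref{erratillas} applied to the generalised nice set $T-X$ (whose intersection with $X$ is empty), which yields the contradiction.

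First I would fire the defining implication on the pair $\{j,k\},\{i,i\}\in T$. Because $j\ast k=i$, this is a legitimate premise (take $\{a,b\}=\{j,k\}$, so that $a\ast b=i$, together with $c=i$), and it forces $P_{\{j,k,i\}}\subseteq T$. As $k=i\ast j$, this set is exactly the full-line set $P_{\{i,j,i\ast j\}}$ recorded in Remark~\ref{re_tipos}, so in particular the edges $\{i,j\}$ and $\{i,k\}=\{i,i\ast j\}$ now belong to $T$.

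The key step is the second implication, applied to $\{i,j\},\{i,k\}\in T$. Taking $\{a,b\}=\{i,j\}$ (so that $a\ast b=k$) and $c=i$, the required second element $\{k,i\}$ lies in $T$, whence $P_{\{i,j,i\}}\subseteq T$. The crucial point is that this set contains the element $\{j,i\ast i\}=\{0,j\}$, precisely because $i\ast i=0$; this is also visible from the description of $\langle P_{\{i,j,i\}}\rangle$ in \eqref{eq_auxiliar}. This is the bridge that converts the \lq diagonal\rq\ datum $\{i,i\}$ into a genuine $X_F$-element, and it is the heart of the argument.

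Finally, $\{i,i\}\in X_E$ and $\{0,j\}\in X_F$ both lie in $T-X$, with $i\ne j$. Since $(T-X)\cap X=\emptyset$ and $T-X$ is generalised nice by hypothesis, Lemma~\ref{erratillas} rules out exactly this pair, giving the desired contradiction; equivalently one could invoke the implication (iii)$\Rightarrow$(i) of Proposition~\ref{charTintXnotgns}. I expect the only genuine obstacle to be the bookkeeping of the $\ast$-products on $\ell_{ij}$ that is needed to recognise the two generated $P$-sets as the indicated forms and, above all, to check that $i\ast i=0$ is what makes $\{0,j\}$ appear; everything else is routine application of the defining absorbing property.
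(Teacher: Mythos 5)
Your proof is correct and follows essentially the same route as the paper: both arguments first fire the absorbing property on $\{j,i\ast j\},\{i,i\}$ to obtain the full-line set $P_{\{i,j,i\ast j\}}\subseteq T$, then fire it a second time to produce a diagonal element together with an $X_F$-element with distinct indices, a configuration forbidden for a generalised nice set disjoint from $X$. The only cosmetic difference is that the paper combines $\{i,j\}$ with $\{i\ast j,j\}$ to get $\{0,i\},\{j,j\}\in T$ and concludes via Proposition~\ref{charTintXnotgns}, whereas you combine $\{i,j\}$ with $\{i,i\ast j\}$ to get $\{0,j\}$ alongside $\{i,i\}$ and conclude via Lemma~\ref{erratillas} --- equivalent endgames, as you yourself note.
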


\begin{proof}
 Suppose, on the contrary, that $\{j, i \ast j\}, \{i, i\} \in I$ for some $i, j \in I$ with $i \neq j$. But then $P_{\{j, i \ast j, i\}} \subseteq T$. From here we get that $\{i, j\} \in T$ and using that $\{i \ast j, j\} \in T$ we derive that $P_{\{i, j, j\}} \subseteq T$. Then $\{0, i\}, \{j, j\} \in T$, and Proposition \ref{charTintXnotgns} yields that $T - X$ is not generalised nice, 
a contradiction. 
\end{proof}

\begin{lemma} \label{0ielements}
The following conditions hold for a generalised nice set $T$ containing $\{0, i\}$, for some $i \in I$. 
\begin{itemize}
\item[\rm (i)] If $\{i, j\} \in T \cap X$ for $j \in I$ with $j \neq i$, then $\{0, j\}, \{0, i \ast j\} \in T - X$.
\item[\rm (ii)] If $\{j, k\} \in (T \cap X) \cap X^{(i)}$   for $j, k \in I$, 
then  $\{0, j\}, \{0, k\} \in T - X$.
\end{itemize}
\end{lemma}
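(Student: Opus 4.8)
The plan is to derive each membership by a single application of the absorbing property in Definition~\ref{defGNS}, choosing the indices so that the resulting $P$-set is one of the low-cardinality ``type $\{0,\cdot,\cdot\}$'' sets made explicit in Remark~\ref{re_tipos}. Recall that any pair containing the index $0$ lies in $X_F$ and hence outside $X$, so it suffices to show the two target pairs belong to $T$; their membership in $T - X$ is then automatic.

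For part (i), I would write $\{0, i\}$ as $\{a, b\}$ with $a = 0$, $b = i$, so that $a \ast b = 0 \ast i = i$. Since $\{i, j\} = \{a \ast b, j\} \in T$, the defining property yields $P_{\{0, i, j\}} \subseteq T$. By Remark~\ref{re_tipos} this set is exactly $\big\{\{0, i\}, \{0, j\}, \{0, i \ast j\}, \{i, j\}\big\}$ (here $i \neq j$ with both in $I$, so $i \ast j \in I$ is well defined). In particular $\{0, j\}, \{0, i \ast j\} \in T$, and since both contain $0$ they lie in $T - X$, as claimed.

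For part (ii), the roles are reversed: I would start from $\{j, k\} \in T \cap X$ with $j \ast k = i$, writing it as $\{a, b\}$ with $a = j$, $b = k$, so $a \ast b = j \ast k = i$. Since $\{i, 0\} = \{0, i\} \in T$ serves as $\{a \ast b, c\}$ with $c = 0$, the defining property gives $P_{\{0, j, k\}} \subseteq T$, which by Remark~\ref{re_tipos} equals $\big\{\{0, j\}, \{0, k\}, \{0, j \ast k\}, \{j, k\}\big\} = \big\{\{0, j\}, \{0, k\}, \{0, i\}, \{j, k\}\big\}$ using $j \ast k = i$. Hence $\{0, j\}, \{0, k\} \in T$, and again both lie in $T - X$.

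There is no genuine obstacle here; the only points requiring care are the bookkeeping of the $\ast$-products ($0 \ast i = i$ in (i), and $j \ast k = i$ in (ii)) so that the correct connecting index is identified, together with the observation that the two produced pairs, containing $0$, are automatically in $X_0 - X$. Both verifications are immediate once the appropriate instance of Remark~\ref{re_tipos} is recognised.
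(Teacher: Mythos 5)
Your proof is correct and is essentially the paper's own argument: in (i) the pairs $\{0,i\},\{i,j\}$ force $P_{\{0,i,j\}}\subseteq T$, and in (ii) the pairs $\{j,k\},\{i,0\}$ with $j\ast k=i$ force $P_{\{j,k,0\}}\subseteq T$, after which Remark~\ref{re_tipos} identifies the required pairs. Your extra bookkeeping (explicitly naming which pair plays the role of $\{a,b\}$ and which of $\{a\ast b,c\}$, and noting that pairs containing $0$ automatically lie outside $X$) only spells out steps the paper leaves implicit.
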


\begin{proof}
(i) Apply that $T$ is generalised nice to get that $P_{\{0, i, j\}} \subseteq T$, which implies that $\{0, j\}, \{0, i \ast j\} \in T$.

\smallskip

\noindent (ii) Since $j \ast k = i$, from $\{j, k\}, \{i, 0\} \in T$, we have that $P_{\{j, k, 0\}}  \subseteq T$, since $T$ is generalised nice. Thus, $\{0, j\}, \{0, k\} \in T - X$. 
\end{proof}

The next step is to think   about sufficient conditions to obtain a generalised nice set $T$ as a combination of
two generalised nice sets 
$T \cap X$ and $T - X$. 
Recall from Proposition~\ref{way} (applied to $T-X$) that  either
$T-X\subseteq X_E-\big \{\{0, 0\}\big\}=:X_E^*$, or $\{0, 0\}\in T-X\subseteq X_F$ or $T-X=P_{\{0, i, i\}}$ for some $i\in I$, and any such set is generalised nice. We analyse these 3 situations in the following theorem.

\begin{theorem}\label{todo}
Take $\emptyset\ne S\subseteq X$ a generalised nice set, and $  J\subseteq I$.    
\begin{itemize}
\item[\rm (a)]  $S_J:=S\cup \big\{ \{j, j\}:j\in J\}$ is a generalised nice set if and only if $S\cap X^{(j)}=\emptyset$ for any $j\in J$. 

\noindent This is equivalent to the fact $J\cap J_S=\emptyset$, for $J_S:=\{a*b:\{a,b\}\in S\}$.
\item[\rm (b)]  $S'_J:=S\cup \big\{  \{0, 0\},\{0, j\}:j\in J\}$ is a generalised nice set 
 if, and only if, we have either $\ell_{ab}\cap J=\emptyset$, or $\ell_{ab}\subseteq 
J$, for any $\{a,b\}\in S$.

\noindent Denote by $I_S:=\cup_{\{a,b\}\in S}\ell_{ab}=\{a,b,a*b:\{a,b\}\in S\}$. Then
\begin{itemize}
\item[\rm (i)] If $J\cap I_S=\emptyset$, then $S'_J$ is a generalised nice set.  These are equivalent conditions if $|J|\le2$.
\item[\rm (ii)] If $I_S\subseteq J$, then $S'_J$ is a generalised nice set.  These are equivalent conditions if $|J|\ge5$.
\item[\rm (iii)] In case $I_S=I$,  the only generalised nice set  $T=S\cup (T-X)$ with $\emptyset\ne  T-X\subseteq X_F$ is that one with $T-X=X_F$.

\end{itemize}

\item[\rm (c)]  $\tilde S_i:=S\cup P_{\{0, i, i\}}$ is a generalised nice set if and only if $i\notin I_S$.

\end{itemize}
\end{theorem}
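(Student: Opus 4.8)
The plan is to treat the three parts (a), (b), (c) in turn, in each case exploiting the explicit description of the sets $P_{\{i,j,k\}}$ from Remark~\ref{re_tipos} and the characterisation of generalised niceness in terms of the absorbing property. Throughout I will use that $S$ is already nice (it is a generalised nice set inside $X$, so by Proposition~\ref{gnsX} it is nice and satisfies $|S\cap X_{\ell_{ij}}|\le 1$), and that the candidate $T-X$ pieces are already generalised nice by Proposition~\ref{way}. Thus the only obstruction to $T$ being generalised nice can come from ``mixed'' triples, i.e. pairs $\{a,b\},\{a*b,c\}\in T$ with one pair in $X$ and the other in $X_E\cup X_F$, or forcing a pair of $X$ from two pairs of $X_E\cup X_F$. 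The key step in every case is to enumerate which forms of $P_{\{i,j,k\}}$ from Remark~\ref{re_tipos} can arise and check exactly when the absorbing property fails.

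For part (a), I would argue as follows. If $S\cap X^{(j)}\ne\emptyset$ for some $j\in J$, pick $\{a,b\}\in S$ with $a*b=j$; then $\{a,b\}\in T$ and $\{j,j\}\in T$, and since $a*b=j$ we have $\{b,j\}\in P_{\{a,b,?\}}$—more directly, using Lemma~\ref{0ielements}-style reasoning, $\{j,j\},\{a,b\}$ with $a*b=j$ forces (via $P_{\{j,j,a\}}$, whose form is $\{\{j,j\},\{j,a\},\{a*j,0\},\dots\}=\{\{j,j\},\{j,a\},\{b,0\},\{j,b\}\}$) an element $\{0,b\}\in T$, contradicting $T\cap X_F=\emptyset$ in the $X_E$ case. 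So the condition is necessary. Conversely, if $S\cap X^{(j)}=\emptyset$ for all $j\in J$, I check the absorbing property: the only problematic $P_{\{i,j,k\}}$ involving a diagonal pair $\{j,j\}$ are $P_{\{j,j,a\}}$, and the hypothesis $a*b\notin J$ for $\{a,b\}\in S$ precisely prevents $\{j,j\}$ together with any $\{a,b\}\in S$ (with $j=a*b$) from triggering a new element. The reformulation $J\cap J_S=\emptyset$ is immediate from $J_S=\{a*b:\{a,b\}\in S\}=\{j:X^{(j)}\cap S\ne\emptyset\}$.

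For part (b), I would use Lemma~\ref{0ielements}. If $\{0,j\}\in T$ and $\{a,b\}\in S$ with, say, $a=j$, then part~(i) of that lemma forces $\{0,b\},\{0,a*b\}\in T$, i.e. $\ell_{ab}\subseteq J$; so as soon as $J$ meets a line $\ell_{ab}$ arising from $S$ we are forced to swallow the whole line. This gives the stated dichotomy ``$\ell_{ab}\cap J=\emptyset$ or $\ell_{ab}\subseteq J$ for each $\{a,b\}\in S$'' as a necessary condition, and the sufficiency is again an absorbing-property check using the forms $P_{\{0,i,j\}}=\{\{0,i\},\{0,j\},\{0,i*j\},\{i,j\}\}$ from Remark~\ref{re_tipos}: the only new element such a set could demand is $\{i,j\}\in X$, which already lies in $S$ exactly when $\ell_{ij}\subseteq J$ guarantees $\{i,j\}$ is accounted for. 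The consequences (i) $J\cap I_S=\emptyset$, (ii) $I_S\subseteq J$, and (iii) the $I_S=I$ case are then read off directly: with $I_S=I$, no line of $S$ can avoid $J$ once $J\ne\emptyset$, so every such line must be contained in $J$, forcing $J=I$ and hence $T-X=X_F$. The cardinality remarks ($|J|\le2$, $|J|\ge5$) follow because a set of at most two points cannot contain a full line, and a set of at least five points in the Fano plane cannot be disjoint from every line of $S$.

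For part (c), writing $P_{\{0,i,i\}}=\{\{0,i\},\{i,i\},\{0,0\}\}$, I note this simultaneously adds a diagonal pair and an $X_F$-pair. The necessity of $i\notin I_S$ combines the obstructions from (a) and (b): if $i\in I_S$ then either $i=a*b$ for some $\{a,b\}\in S$ (and then $\{i,i\}$ triggers part~(a)'s failure, producing a forbidden $\{0,b\}$), or $i\in\{a,b\}$ for some $\{a,b\}\in S$ (and then $\{0,i\}$ triggers part~(b)'s failure via Lemma~\ref{0ielements}(i), forcing further $X_F$-elements not present in $\tilde S_i$). For sufficiency, when $i\notin I_S$ both the $\{i,i\}$ and the $\{0,i\}$ additions are ``inert'': no pair of $S$ shares a line with $i$ in the relevant way, so no $P_{\{\cdots\}}$ demands a new element, and I verify the absorbing property directly on the handful of triple-types from Remark~\ref{re_tipos}.

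The main obstacle will be the bookkeeping in the necessity directions of (b) and (c): one must be careful that forcing $\{i,j\}\in X$ from $P_{\{0,i,j\}}$ does not accidentally already hold in $S$ (so that no contradiction arises), which is exactly why the correct condition is the line-dichotomy rather than plain disjointness, and why the cardinality thresholds in (b)(i)--(ii) are needed to upgrade ``sufficient'' to ``equivalent''. Verifying that these are the only failure modes—i.e. that no $P_{\{i,j,k\}}$ of a form not yet considered can sabotage niceness—is the step I expect to require the most care, and I would handle it by systematically running through the seven forms in Remark~\ref{re_tipos} against the three shapes of $T-X$ in Proposition~\ref{way}.
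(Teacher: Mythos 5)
Your overall plan coincides with the paper's own proof: split into the three shapes of $T-X$, check the absorbing property only on ``mixed'' triggers using the explicit forms in Remark~\ref{re_tipos}, use Lemma~\ref{0ielements} for the necessity in (b), and settle (b)(i)--(iii) by the geometry of lines in the Fano plane (two lines always meet; a $2$-set contains no line; the complement of a $5$-set contains no line). However, your necessity argument for (a) contains a step that fails as written. You claim that $\{j,j\}$ and $\{a,b\}\in S$ with $a*b=j$ force $P_{\{j,j,a\}}$ (and hence a forbidden $X_F$-element) directly. They do not: the absorbing property applied to $\{a,b\}$ and $\{a*b,j\}=\{j,j\}$ yields $P_{\{a,b,a*b\}}$, the \emph{line} type, all of whose elements lie in $X\cup X_E$. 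The set $P_{\{j,j,a\}}$ can only be triggered by $\{j,j\}$ together with $\{0,a\}$, or by $\{j,a\}$ together with $\{j,b\}$, none of which is available at the outset; and the elements demanded by $P_{\{a,b,a*b\}}$ (namely $\{a,j\},\{b,j\}\in S$ and $a,b\in J$) may very well be present, e.g.\ $S=X_{\ell_{ab}}$, $J=\ell_{ab}$, so no contradiction arises in one step. The correct derivation is two-step: first adjoin $P_{\{a,b,j\}}$, obtaining $\{a,j\},\{b,j\}\in T$, and only then trigger $P_{\{a,j,j\}}$, which contains $\{0,a\}$ (not $\{0,b\}$ as you wrote). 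This iteration is exactly the paper's Lemma~\ref{condition}, which the paper invokes at this very point; your proposal is missing it.

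A second, smaller defect is in your sufficiency check for (b): you have the trigger and the demand interchanged. A pair $\{i,j\}\in X$ is never \emph{demanded} by $P_{\{0,i,j\}}$ out of two $X_F$-elements --- every trigger for $P_{\{0,i,j\}}$ already contains $\{i,j\}$. The actual situation is: the trigger is $\{0,j\}$ ($j\in J$) together with $\{j,m\}\in S$, and what is demanded are the two further $X_F$-pairs $\{0,m\}$ and $\{0,j*m\}$; their presence in $S'_J$ is precisely what the line-dichotomy ($j\in\ell_{jm}\cap J$ implies $\ell_{jm}\subseteq J$) guarantees. With these two repairs (and tightening ``a $5$-set cannot be disjoint from every line of $S$'' to ``$I-J$ contains no line, so $J$ meets \emph{every} line $\ell_{ab}$, whence $\ell_{ab}\subseteq J$''), your argument becomes the paper's proof; parts (b)(iii) and (c) are sound in outline, and in (c) your detour through the diagonal pair is even unnecessary, since $\{a,b\}\in S$ and $\{a*b,0\}=\{i,0\}$ already trigger $P_{\{a,b,0\}}\ni\{0,a\}$, which is forbidden in $\tilde S_i-X=P_{\{0,i,i\}}$.
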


\begin{proof} 

\noindent (a) Let us consider first the case with $T-X\subseteq X_E^*$, that is, $T=S_J$.
 In Lemma~\ref{condition} we saw that in order for $S_J$ to be generalised nice, it is necessary that $S\cap X^{(j)}=\emptyset$ for any $j\in J$.
However, this condition is  also sufficient.
 Indeed, we have to check that $\{k,l\},\{k\ast l, m\}\in  S_J$ implies $P_{\{k,l,m\}}\subseteq S_J$.
If both  pairs are in $X$, then there is nothing to check, since $S=S_J\cap X$ is generalised  nice. 
We need only consider the case wherein one of these two pairs is $\{j,j\}$, with $j\in J$. If
$\{j,j\}=\{k,l\}$, then we have the contradiction $\{0,m\}\in S_J$.
Otherwise, $\{j,j\}=\{k\ast l, m\}$, so that $k\ast l= j$, that is, $\{k,l\}\in   X^{(j)}$.  However,
$\{k,l\}\in S $ (if $\{k,l\}\in S_J-S$, then $k\ast l= 0$, but $j\ne 0$), contradicting the hypothesis $S\cap X^{(j)}=\emptyset$. 
\smallskip

\noindent (b) Next assume $S'_J$ is a generalised nice set and take $\{a,b\}\in S$
such that $\ell_{ab}\cap J\ne\emptyset$. If either $a$ or $b$ belongs to $J$, then $\ell_{ab}=\{a,b,a*b\}\subseteq J$ by Lemma~\ref{0ielements}(i). 
 Otherwise, the index belonging to $J$ is $a*b$ and we
  get $\ell_{ab} \subseteq J$ by Lemma~\ref{0ielements}(ii). Conversely,  assume that 
for any $\{a,b\}\in S$  
either $\ell_{ab}\cap J=\emptyset$ or $\ell_{ab}\subseteq J$,  and let us prove that $S'_J$ is generalised  nice.
Take $\{k,l\},\{k\ast l, m\}\in S'_J$ and let us check that $P_{\{k,l,m\}}\subseteq  S'_J$.
If both pairs are in $X$ there is nothing to check, so assume one of them coincides with  either $\{0, 0\}$ or $\{0 ,  j\}$ for some $j\in J$.
If $k=l=m=0$, then trivially $P_{\{0,0, 0\}}=\big\{\{0 ,  0\}  \big\}\subseteq S'_J$.
If $k=l=0\ne m$ then $\{0 ,  m\}\in S'_J$,   and it is again evident that  $P_{\{0,0, m\}}=\big\{\{0 ,  0\},\{0 ,  m\}  \big\}\subseteq S'_J$.
It is not possible that $\{k,l\}\ne \{0 ,  0\} =\{k\ast l, m\}$, since $k=l$ but the only element in $S'_J\cap X_E$ is $\{0 ,  0\} $.
So assume that  neither of the  pairs is $\{0 ,  0\}$. 
If $\{k* l,m\}=\{0 ,  j\}$ with $j\in J$, then $k=l$, which is a contradiction since $\{k , k\}\notin S'_J$.
The only possibility left is $\{k ,  l\}=\{0 ,  j\}$, which gives $k*l=0*j=j$ and $\{j, m\}\in S$.
As besides $j\in\ell_{jm}\cap J $, the hypothesis gives $\ell_{jm}\subseteq J$ and so the three $\{0 ,  j\},\{0 ,  m\} $ and $\{0 ,  j*m\}$ belong to $S'_J$. 
Thus $P_{\{0,j,m\}}=\big\{ \{0 ,  j\},\{0 ,  m\} ,\{0 ,  j*m\},\{j, m\} \big\}\subseteq  S'_J$, which finishes case (b).

Next, let us consider the  particular cases mentioned within (b).

\noindent (i)   As for any $\{a,b\}\in S$ we have $\ell_{ab}\subseteq I_S$,   if $J\cap I_S=\emptyset$,  then 
 $\ell_{ab}\cap J=\emptyset$ and $S'_J$ is generalised   nice. On the other hand, if $|J|\le2$ and $S'_J$ is generalised  nice, then 
 for any $\{a,b\}\in S$, necessarily $\ell_{ab}\cap J=\emptyset$ ($J$ does not contain a line with 3 elements), which is to say $J\cap I_S=\emptyset$.
 
 \noindent (ii) If $I_S\subseteq J$, then, for any $\{a,b\}\in S$  we have $\ell_{ab}\subseteq J$, 
 which is one of the conditions that ensures that $S'_J$ is  generalised  nice. In case $|J|\ge5$, we can prove the converse: if $S'_J$ is generalised  nice, then 
  $\ell_{ab}\subseteq J$ for any $\{a,b\}\in S$, since  $J$ intersects any of the 7 lines ($I-J$ has at most   two elements, so it does no contain any line).

  \noindent (iii) First, it is clear that if $J=I$, then $S'_J$ is a  generalised nice set by  (ii). Second, let us check that we reach a contradiction if we assume 
 $I_S=I$, $  \emptyset\ne J\ne I$, and $S'_J$   a  generalised nice set. We can take $j\in J$ and 
 $a \in I-J$, by hypothesis on $J$.
 As $j\in I_S$, there is $k\in I$ with either $\{j, k\}\in S$ or $\{j*k, k\}\in S$. 
 As $j\in \ell_{jk}\cap J $, then $\ell_{jk} \subseteq J$.
 Also, $a\in I_S$ means that there is $b\in I$ with either $\{a, b\}\in S$ or $\{a*b, b\}\in S$. 
 As $\ell_{ab}\not\subseteq J$, this means $\ell_{ab}\cap J=\emptyset$. 
  Any two lines intersect, so that $\ell_{ab}\cap\ell_{jk}\ne \emptyset$, and any element $c\in \ell_{ab}\cap\ell_{jk}$ would be in $J$ and  
  {not in} $J$, a contradiction.
  \smallskip

\noindent (c) Finally, assume that $\tilde S_i=S\cup P_{\{0, i, i\}}$ is a generalised nice set. If $i\in I_S$, then there is $k\in I$
different from $i$ such that 
either $\{i, k\}\in S$ or $\{i*k, k\}\in S$. 
In the first case, $\{0,i\}\in \tilde S_i$ gives $P_{\{0,i,k \}}\subseteq \tilde S_i$ and $\{0,k\}\in \tilde S_i-X$. In the second case,  
$P_{\{i*k,k,0 \}}\subseteq \tilde S_i$ and we have the same contradiction $\{0,k\}\in \tilde S_i-X=P_{\{0, i, i\}}$. Conversely, take $i\notin I_S$   and 
$\{k,l\},\{k\ast l, m\}\in \tilde S_i$, with at least one of these two pairs not in  $S$. Let
us check that $P_{\{k,l,m\}}\subseteq  \tilde S_i$. If $\{k, l\}=\{0, 0\}$, then $k*l=0$ and the other element $ \{k\ast l, m\}\in \big\{ \{0, 0\},\{0, i\}  \big\}  $, but both $P_{\{0,0,0 \}}$ and $P_{\{0,0,i \}}\subseteq \tilde S_i$. If $\{k* l,m\}=\{0, 0\}\ne \{k, l\}$, then $k=l$, so $\{k, l\}=\{i,i\}$, and of course $P_{\{i,i,0 \}}\subseteq \tilde S_i$. Otherwise, either $\{k, l\}=\{i, i\}$  or $\{k, l\}=\{0, i\}$. In the first case   $\{k* l,m\}=\{0, i\}$ and $P_{\{i,i,i \}}\subseteq \tilde S_i$.
In the second case, $\{k* l,m\}=\{i,m\}\in S$. As $i\notin I_S$, then necessarily $m$ equals $0$ or $i$ and we are repeating   a previously discussed situation.
 \end{proof}


\subsection{Collecting the resulting generalised nice sets}
Let us continue by constructing generalised nice sets $T$ as combinations of the generalised nice sets $T - X$ and $T \cap X$, from Table \ref{gnsEmptyX} and Corollary~\ref{somegns} respectively (following the criteria to combine given in Theorem~\ref{todo}). We may assume that $T \cap X$ is, not only collinear but, equal to one of the generalised nice sets from Corollary~\ref{somegns} and go through all the possibilities for $T - X$ up to collineation. For instance if $T \cap X = \big\{\{1, 2\}\big\}$ and $T - X \sim_c \big\{\{1, 1\}\big\}$, 
then there is $i\in I$ with $T - X = \big\{\{i, i\}\big\}$, and Theorem~\ref{todo}(a) says that $T$ is a generalised nice set for all $i\ne 5$.
Among 
the six  possible combinations that give rise to a generalised nice set, there are only two not collinear: $\big\{\{1, 1\}, \{1, 2\}\big\}$ and $\big\{\{3, 3\}, \{1, 2\}\big\}$. 
Analogously,  Theorem~\ref{todo} permits us to know which elements from $X_0 - X$ can be added 
to   each of the cases displayed in Corollary~\ref{somegns} 
so that the resulting disjoint union is a generalised nice set; and our job consists of checking the possible collineations.

We have separate tables for each possible cardinality of $T - X$ and up to collinearity of $T-X;$ more precisely:
we will be looking at the following possibilities:
\begin{itemize}
 
\item[-]  $T-X\subseteq X_E-\big \{\{0, 0\}\big\}=X_E^*$;
\item[-]  $\{0, 0\}\in T-X\subseteq X_F$;
\item[-]   $T-X=P_{\{0, i, i\}}$ for some $i\in I$.
\end{itemize}

In order to apply Theorem~\ref{todo}, it is convenient to  first compute the sets $I_S$ and $J_S$ described therein, for any nice set $S$ in Corollary~\ref{somegns}. This computation is straightforward.
\begin{longtable} [c] { | c | c| c| }
    \hline
     $ \bf I_S$ & $\bf S$ & $\bf J_S$ \\
    \hline 
    \endfirsthead
    $\{1,2,5\}$  &  $\big\{ \{1, 2\}\big\}$            &  $\{5\}$ \\
    \hline
     $\{1,2,3,5,6\}$& $\big\{ \{1, 2\}, \{1, 3\}\big\}$   & $\{5,6\}$ \\
    \hline 
     $\{1,2,5,6,7\}$& $\big\{ \{1, 2\}, \{6, 7\}\big\}$   & $\{5\}$ \\
    \hline
    $I=\{1,2,3,4,5,6,7\}$ & $\big\{ \{2, 5\}, \{3, 6\}, \{4, 7\}\big\}$ & $\{1\}$   
  \\
   \hline
  $I$ & $\big\{ \{1, 2\}, \{1, 3\}, \{1, 4\}\big\}$ & $\{5,6,7\}$ 
  \\
  \hline
   $I$& $\big\{ \{1, 2\}, \{1, 3\}, \{1, 7\}\big\}$ & $\{4, 5, 6\}$ \\
 \hline
  $\{1,2,3,4,5,6\}=I-\{7\}$& $\big\{ \{1, 2\}, \{1, 6\}, \{2, 6\}\big\}$ & $\{3,4,5\}$ \\
  \hline
    $\{1,2,3,5,6,7\}=I-\{4\}$& $\big\{ \{1, 2\}, \{1, 6\}, \{6, 7\}\big\}$ & $\{3,5\}$
  \\ 
  \hline
   $I$& $\big\{ \{1, 2\}, \{1, 6\}, \{1, 7\}, \{2, 6\}\big\}$ & $\{3,4,5\}$ \\
  \hline
   $\{1,2,3,5,6,7\}=I-\{4\}$& $\big\{ \{1, 2\}, \{1, 6\}, \{2, 7\}, \{6, 7\}\big\}$ & $\{3,5\}$ \\
  \hline
  $I$& $\big\{ \{1, 2\}, \{1, 6\}, \{1, 7\}, \{2, 6\}, \{2, 7\}\big\}$ & $\{3,4,5\}$\\
  \hline
   $I$& $X_{\ell^c_{12}}$ & $\{1,2,5\}$ \\
    \hline
   $I$& $P_{\{1, 2, 3\}}$ & $\{4,5,6,7\}$\\
  \hline
    \caption{\label{IsyJs}  Auxiliary  indices for the Gns contained in $X$.  }
      \end{longtable}


\subsection{$T - X$ of cardinality 1}

  Recall  that the generalised nice sets contained in $X_0 - X$ of cardinality 1 are all of the form $\big \{\{i, i\} \big\}$, for $i \in I_0$. Moreover, Lemma~\ref{trivialfacts}(iv) tells us that $S \cup \big\{\{0, 0\}\big\}$  is generalised nice for any $S$ as in Corollary~\ref{somegns}. The resulting generalised nice sets have been collected in the table below.

\smallskip

\begin{longtable} [c] { | c | c| c| }
    \hline
    \multicolumn{3} { | c | }{{\bf Gns $T$ such that both $T \cap X$ and $T - X$ are non-empty gns}} \\
    \hline
     $T - X$ & $T \cap X$ & All possible $T$s \\
    \hline
    \endfirsthead
    \hline
    $T - X$ & $T \cap X$ & All possible $T$s \\
    \hline
    \endhead
    \hline
    \endfoot
    \hline
    \endlastfoot
    $\big\{ \{0, 0\}\big\}$  &  $\big\{ \{1, 2\}\big\}$            & $\big\{ \{0, 0\}, \{1, 2\}\big\}$ \\
    \hline
    $\big\{ \{0, 0\}\big\}$ & $\big\{ \{1, 2\}, \{1, 3\}\big\}$   & $\big\{ \{0, 0\}, \{1, 2\}, \{1, 3\}\big\}$ \\
    \hline 
    $\big\{ \{0, 0\}\big\}$ & $\big\{ \{1, 2\}, \{6, 7\}\big\}$   & $\big\{ \{0, 0\}, \{1, 2\}, \{6, 7\}\big\}$ \\
    \hline
    $\big\{\{0, 0\}\big\}$ & $\big\{ \{2, 5\}, \{3, 6\}, \{4, 7\}\big\}$ & $\big\{\{0, 0\}, \{2, 5\}, \{3, 6\}, \{4, 7\}\big\}$   
  \\
   \hline
  $\big\{\{0, 0\}\big\}$ & $\big\{ \{1, 2\}, \{1, 3\}, \{1, 4\}\big\}$ & $\big\{\{0, 0\}, \{1, 2\}, \{1, 3\}, \{1, 4\}\big\}$ 
  \\
  \hline
  $\big\{ \{0, 0\}\big\}$ & $\big\{ \{1, 2\}, \{1, 3\}, \{1, 7\}\big\}$ & $\big\{\{0, 0\}, \{1, 2\}, \{1, 3\}, \{1, 7\}\big\}$ \\
 \hline
 $\big\{ \{0, 0\}\big\}$ & $\big\{ \{1, 2\}, \{1, 6\}, \{2, 6\}\big\}$ & $\big\{\{0, 0\}, \{1, 2\}, \{1, 6\}, \{2, 6\}\big\}$ \\
  \hline
   $\big\{ \{0, 0\}\big\}$ & $\big\{ \{1, 2\}, \{1, 6\}, \{6, 7\}\big\}$ & $\big\{\{0, 0\}, \{1, 2\}, \{1, 6\}, \{6, 7\}\big\}$
  \\ 
  \hline
  $\big\{ \{0, 0\}\big\}$ & $\big\{ \{1, 2\}, \{1, 6\}, \{1, 7\}, \{2, 6\}\big\}$ & $\big\{\{0, 0\}, \{1, 2\}, \{1, 6\}, \{1, 7\}, \{2, 6\}\big\}$ \\
  \hline
  $\big\{ \{0, 0\}\big\}$ & $\big\{ \{1, 2\}, \{1, 6\}, \{2, 7\}, \{6, 7\}\big\}$ & $\big\{\{0, 0\}, \{1, 2\}, \{1, 6\}, \{2, 7\}, \{6, 7\}\big\}$ \\
  \hline
 $\big\{ \{0, 0\}\big\}$ & $\big\{ \{1, 2\}, \{1, 6\}, \{1, 7\}, \{2, 6\}, \{2, 7\}\big\}$ & $\big\{ \{0, 0\}, \{1, 2\}, \{1, 6\}, \{1, 7\}, \{2, 6\}, \{2, 7\}\big\}$\\
  \hline
  $\big\{ \{0, 0\}\big\}$ & $X_{\ell^c_{12}}$ & $\big\{\{0, 0\}\big\} \cup X_{\ell^c_{12}}$ \\
    \hline
  $\big\{ \{0, 0\}\big\}$ & $P_{\{1, 2, 3\}}$ & $\big\{\{0, 0\}\big\} \cup P_{\{1, 2, 3\}}$\\
  \hline
    \caption{\label{gnsLastcase00} $T - X = \big\{\{0, 0\}\big\}$.}
      \end{longtable}

\smallskip

Next, we consider generalised nice sets $T$ such that $T - X$ is of the form $\big\{\{i, i\}\big\}$ for $i \in I$. 
As a a direct application  of Theorem~\ref{todo}(a),  
with the notations therein, we obtain: 

\begin{cor}\label{le_conii}
If $S\subseteq X$ is a   generalised nice set, 
then $S_{\{ j\}}=S\cup \big\{\{j, j\}\big\}$ is a generalised nice set if and only if $S\cap X^{(j)}=\emptyset$, or equivalently, if
$j\notin J_S$.
\end{cor}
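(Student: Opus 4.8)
The plan is to read off the corollary as the singleton case $J=\{j\}$ of Theorem~\ref{todo}(a). First I would substitute $J=\{j\}$ into the definition $S_J:=S\cup\{\{j',j'\}:j'\in J\}$, observing that this produces exactly $S_{\{j\}}=S\cup\big\{\{j,j\}\big\}$. The universally quantified hypothesis appearing in Theorem~\ref{todo}(a), namely ``$S\cap X^{(j')}=\emptyset$ for every $j'\in J$'', then collapses to the single requirement $S\cap X^{(j)}=\emptyset$. Since Theorem~\ref{todo}(a) asserts precisely the equivalence between ``$S_J$ is generalised nice'' and that hypothesis, the first form of the statement is immediate.

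It remains only to verify the reformulation $S\cap X^{(j)}=\emptyset\iff j\notin J_S$, which I would obtain by unwinding definitions. Recall $X^{(j)}=\{\{a,b\}\in X: a\ast b=j\}$ and $J_S=\{a\ast b:\{a,b\}\in S\}$. Thus an edge lies in $S\cap X^{(j)}$ precisely when some $\{a,b\}\in S$ satisfies $a\ast b=j$, and such an edge exists exactly when $j\in J_S$. Taking contrapositives yields $S\cap X^{(j)}=\emptyset\iff j\notin J_S$. Equivalently, this is just the instance $\{j\}\cap J_S=\emptyset$ of the condition ``$J\cap J_S=\emptyset$'' already recorded in Theorem~\ref{todo}(a), so no genuinely new verification is needed.

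There is no real obstacle here: all the substantive content lives in Theorem~\ref{todo}(a), whose necessity direction rested on Lemma~\ref{condition} and whose sufficiency direction was a direct check of the absorbing property from Definition~\ref{defGNS}. The one point worth flagging is that a singleton $J=\{j\}$ cannot contain an entire line, so the two equivalent phrasings of the criterion coincide with no cardinality caveat, in contrast to the more delicate bookkeeping that parts (b)(i) and (b)(ii) of Theorem~\ref{todo} require for larger $J$.
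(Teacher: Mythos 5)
Your proposal is correct and matches the paper exactly: the paper derives Corollary~\ref{le_conii} as ``a direct application of Theorem~\ref{todo}(a)'' with $J=\{j\}$, which is precisely your specialisation, including the identification $S\cap X^{(j)}=\emptyset\iff j\notin J_S$ that is already recorded in that theorem. No gap to report (the only unmentioned point, in both your write-up and the paper, is the trivial case $S=\emptyset$, excluded from the hypotheses of Theorem~\ref{todo} but for which the corollary holds vacuously).
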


Thus, to obtain  the generalised nice sets $T$ such that $T - X$ is of the form $\big\{\{i, i\}\big\}$, 
 for $i \in I$ and $T\cap X$ coinciding with a fixed generalised nice set $S$,
we only have to be careful with how many equivalence classes appear. 
Take into account that  any  $\sigma$ such that $\tilde\sigma(S_{\{ i \}})= S_{\{ j \}}$  (for $i,j\notin J_S$) satisfies 
$\tilde\sigma(S)=S$ so that $ \sigma$ preserves $I_S$ and $J_S$, as well as the subsets of indices in $I_S-J_S$ with the same number of occurrences in elements of $S$.   Here we give a detailed description of how to use these tools.

\noindent   $-$   
 $S=\big\{ \{1, 2\}\big\}$. 
 As in the above corollary,   $S_{\{  i\}}=\big\{ \{1, 2\},\{i, i\}\big\}$ is a generalised nice set if and only if  $i\ne 5$. 
Now $S_{\{ 1 \}}$ is collinear to $S_{\{ 2 \}}$, for instance,
$\tilde\sigma(S_{\{ 1 \}})= S_{\{ 2 \}}$ for $\sigma=\sigma_{213}$.   
  Notice also that $S_{\{ 3 \}}\sim_c S_{\{  4\}}\sim_c S_{\{  6\}}\sim_c S_{\{ 7 \}}$ are collinear  to one another, since 
for any $k\notin\ell_{12}$ the collineation $\sigma=\sigma_{12k}$ satisfies
$\tilde\sigma(S_{\{ 3 \}})= S_{\{ k \}}$.
However, $S_{\{ 1 \}}$ is not collinear to $S_{\{ 3 \}}$ because we require that $\sigma$ be $I_S-J_S=\{1,2\}$-invariant,
 and so $\tilde\sigma (\{ 1,1 \})\ne \{ 3,3 \}$.

\noindent  $-$
$S=\big\{ \{1, 2\}, \{1, 3\}\big\}$.
Now $S_{\{  i\}} $ is  generalised nice set if    $i\ne 5,6$. Any collineation interchanging two of these sets has to
leave invariant $\{1\}$ and $\{2,3\}$, so either $\sigma=\id$ or $\sigma=\sigma_{132}$.
In   either case, $\sigma$ carries $7=2*3$ to 7 and hence
 $4=1*7$ to $4$. This means that $S_{\{ 2 \}}\sim_c S_{\{ 3 \}}$ ($\sigma_{132}$ is a suitable collineation) 
 but the $S_{\{ i \}}$'s for each of $i=1,2,4,7$ are pairwise not collinear.

\noindent  $-$
 $S=\big\{ \{1, 2\}, \{6, 7\}\big\}$. 
 While we can join $S$ with  $\big\{\{i, i\}\big\}$ for any $i \ne 5$ to find a generalised nice set, only two distinct equivalence
  classes appear: $S_{\{ 1 \}}\sim_c S_{\{ 2 \}}\sim_cS_{\{ 6 \}}\sim_c S_{\{ 7 \}}$
and $S_{\{ 3 \}}\sim_c S_{\{ 4 \}}$.
Note that  $S_{\{ 1 \}}\not\sim_c S_{\{ 3 \}}$, since any collineation would preserve the subsets $\{1,2,6,7\}$ and $\{3,4\}$.

\noindent  $-$
$S=\big\{ \{2, 5\}, \{3, 6\}, \{4, 7\}\big\}$ can be combined with $\big\{\{i, i\}\big\}$ for $i \ne 1$. 
There is only one class because all the elements $2,3,4,5,6,7$ play the same role in the set $S$. For instance $\sigma_{1ik}$  carries $S_{\{ 2 \}}$ to $S_{\{ i \}}$ for any $k\notin\ell_{1i}$.

\noindent  $-$
$S=\big\{ \{1, 2\}, \{1, 3\}, \{1,4\}\big\}$ can be enlarged with $\big\{\{i, i\}\big\}$ for $i = 1,2,3,4$ to get a generalised  nice set. 
Two classes arise: 
$S_{\{ 1 \}}$ and
$  S_{\{ 2 \}}\sim_cS_{\{ 3 \}}\sim_c S_{\{ 4\}}$, since $\sigma_{132}$ and $\sigma_{142}$ move $S_{\{ 2 \}}$ to $S_{\{ 3 \}}$ and $S_{\{ 4 \}},$ respectively. 

\noindent  $-$
$S=\big\{ \{1, 2\}, \{1, 3\}, \{1,7\}\big\}$. This case is very similar to the above, with two equivalence classes, $S_{\{ 1 \}}$ and
$  S_{\{ 2 \}}\sim_cS_{\{ 3 \}}\sim_c S_{\{ 7\}}$.

\noindent  $-$
$S=\big\{ \{1, 2\}, \{1, 6\}, \{6, 7\}\big\}$. 
Now  $S_{\{ i \}}$ is generalised nice whenever $i\notin J_S=\{ 3,5\}$. 
Any collineation 
between two of these sets  has to preserve       $\{1, 6\}$, $\{2,7\}$ (because of the number of occurrences in $S$), and $I-I_S=\{4\}$. 
So $S_{\{ 1 \}}$, $S_{\{ 2\}},$ and $S_{\{ 4 \}}$ cannot be collinear.
The map $\sigma_{673}$ allows us to see both that $S_{\{ 1 \}}$ is collinear to $S_{\{ 6 \}}$, and $S_{\{ 2 \}}$ to $S_{\{ 7 \}}$. This leaves
 just 3 
equivalence classes.

\noindent  $-$
$S=\big\{ \{1, 2\}, \{1, 6\}, \{1, 7\}, \{2, 6\}\big\}$. 
Notice first that the index $1$ appears   thrice in elements of $S$, the indices $2$ and $6$ each appear twice, and $7$ appears once. In this setting our collineations must preserve the sets $\{1\}$, $\{2,6\}$ and $\{7\}$, to give 3 classes: $S_{\{ 1 \}}$, $S_{\{ 2 \}}\sim_c S_{\{ 6\}}$,
 and $S_{\{7 \}}$.

\noindent  $-$
$S=\big\{ \{1, 2\}, \{1, 6\}, \{2, 7\}, \{6, 7\}\big\}$. As $I-I_S=\{4\}$, 
we see that $S_{\{ 4 \}}$ is not collinear to $S_{\{ i \}}$ for any $i\in I_S-J_S=\{1,2,6,7\}$. 
Furthermore, it is quite easy to check that $S_{\{ i \}}$ is collinear to $S_{\{ j\}}$ for any two indices $i,j\in I_S-J_S$.

\noindent  $-$
$S=\big\{ \{1, 2\}, \{1, 6\}, \{1, 7\}, \{2, 6\}, \{2, 7\}\big\}$. Working as in the above cases, we get two equivalence classes: $S_{\{ 1 \}}\sim_c S_{\{ 2 \}}$  and  $S_{\{ 6\}}\sim_c S_{\{ 7\}}$.

\noindent  $-$
$S=X_{\ell^c_{12}}$. Here there is only one possibility up to collineation, since $S_{\{ i \}}$ is generalised   nice only for $i\in I-\{1,2,5\}$. These four indices each play the same role in $S$ and it is simple to find the required collineation.

\noindent   $-$   
$S=P_{\{1, 2, 3\}}$. Again there is only one class $S_{\{ 1\}}\sim_c S_{\{ 2\}}\sim_c S_{\{ 3\}}$.

 We have obtained exactly 27 generalised nice sets $T$ such that $|T - X|=1$, not containing $\{0,0\}$. These are collected in the table below.

\begin{longtable} [c] { | c | c| c| }
    \hline
    \multicolumn{3} { | c | }{{\bf Gns $T$ such that both $T \cap X$ and $T - X$ are non-empty gns}} \\
    \hline
     $T - X$ & $T \cap X$ & All possible $T$s \\
    \hline
    \endfirsthead
    \hline
    \endhead
    \hline
    \endfoot
    \hline
    \endlastfoot
    $\big\{ \{1, 1\}\big\}$ &   $\big\{ \{1, 2\}\big\}$                                  & $\big\{ \{1, 1\}, \{1, 2\}\big\}$ \\
    $\big\{ \{3, 3\}\big\}$ &                                     & $\big\{ \{3, 3\}, \{1, 2\}\big\}$ \\
    \hline
    $\big\{ \{1, 1\}\big\}$ & $\big\{ \{1, 2\}, \{1, 3\}\big\}$   & $\big\{ \{1, 1\}, \{1, 2\}, \{1, 3\}\big\}$ \\
    $\big\{ \{2, 2\}\big\}$ &                                     & $\big\{ \{2, 2\}, \{1, 2\}, \{1, 3\}\big\}$ \\
    $\big\{ \{4, 4\}\big\}$ &                                     & $\big\{ \{4, 4\}, \{1, 2\}, \{1, 3\}\big\}$ \\
    $\big\{ \{7, 7\}\big\}$ &                                     & $\big\{ \{7, 7\}, \{1, 2\}, \{1, 3\}\big\}$ \\
   \hline 
    $\big\{ \{1, 1\}\big\}$ & $\big\{ \{1, 2\}, \{6, 7\}\big\}$   & $\big\{ \{1, 1\}, \{1, 2\}, \{6, 7\}\big\}$ \\
    $\big\{ \{3, 3\}\big\}$ &                                     & $\big\{ \{3, 3\}, \{1, 2\}, \{6, 7\}\big\}$ \\
    \hline
  $\big\{ \{2, 2\}\big\}$ &   $\big\{ \{2, 5\}, \{3, 6\}, \{4, 7\}\big\}$  & $\big\{ \{2, 2\}, \{2, 5\}, \{3, 6\}, \{4, 7\}\big\}$    
  \\
   \hline
  $\big\{\{1, 1\}\big\}$ & $\big\{ \{1, 2\}, \{1, 3\}, \{1, 4\}\big\}$   & $\big\{\{1, 1\}, \{1, 2\}, \{1, 3\}, \{1, 4\}\big\}$  
   \\ 
  $\big\{\{2, 2\}\big\}$ &                                           & $\big\{\{2, 2\}, \{1, 2\}, \{1, 3\}, \{1, 4\}\big\}$ \\ 
  \hline
 $\big\{ \{1, 1\}\big\}$ & $\big\{ \{1, 2\}, \{1, 3\}, \{1, 7\}\big\}$ & $\big\{\{1, 1\}, \{1, 2\}, \{1, 3\}, \{1, 7\}\big\}$ \\ 
  $\big\{ \{2, 2\}\big\}$ &  & $\big\{\{2, 2\}, \{1, 2\}, \{1, 3\}, \{1, 7\}\big\}$ \\ 
 \hline
  $\big\{ \{1, 1\}\big\}$ & $\big\{ \{1, 2\}, \{1, 6\}, \{2, 6\}\big\}$ & $\big\{\{1, 1\}, \{1, 2\}, \{1, 6\}, \{2, 6\}\big\}$ \\ 
  $\big\{ \{7, 7\}\big\}$ &  & $\big\{\{7, 7\}, \{1, 2\}, \{1, 6\}, \{2, 6\}\big\}$ \\
  \hline
  $\big\{ \{1, 1\}\big\}$ & $\big\{ \{1, 2\}, \{1, 6\}, \{6, 7\}\big\}$ & $\big\{\{1, 1\}, \{1, 2\}, \{1, 6\}, \{6, 7\}\big\}$ \\ 
  $\big\{ \{2, 2\}\big\}$ &  & $\big\{\{2, 2\}, \{1, 2\}, \{1, 6\}, \{6, 7\}\big\}$ 
  \\ 
  $\big\{ \{4, 4\}\big\}$ &  & $\big\{\{4, 4\}, \{1, 2\}, \{1, 6\}, \{6, 7\}\big\}$ \\
  \hline
  $\big\{ \{1, 1\}\big\}$ & $\big\{ \{1, 2\}, \{1, 6\}, \{1, 7\}, \{2, 6\}\big\}$ & $\big\{\{1, 1\}, \{1, 2\}, \{1, 6\}, \{1, 7\}, \{2, 6\}\big\}$ \\ 
  $\big\{ \{2, 2\}\big\}$ &  & $\big\{\{2, 2\}, \{1, 2\}, \{1, 6\}, \{1, 7\}, \{2, 6\}\big\}$ 
  \\ 
  $\big\{ \{7, 7\}\big\}$ &  & $\big\{\{7, 7\}, \{1, 2\}, \{1, 6\}, \{1, 7\}, \{2, 6\}\big\}$ \\
  \hline
$\big\{ \{1, 1\}\big\}$ & $\big\{ \{1, 2\}, \{1, 6\}, \{2, 7\}, \{6, 7\}\big\}$ & $\big\{\{1, 1\}, \{1, 2\}, \{1, 6\}, \{2, 7\}, \{6, 7\}\big\}$ \\ 
  $\big\{ \{4, 4\}\big\}$ &  & $\big\{\{4, 4\}, \{1, 2\}, \{1, 6\}, \{2, 7\}, \{6, 7\}\big\}$ 
  \\
  \hline
  $\big\{ \{1, 1\}\big\}$ & $\big\{ \{1, 2\}, \{1, 6\}, \{1, 7\}, \{2, 6\}, \{2, 7\}\big\}$ & $\big\{ \{1, 1\}, \{1, 2\}, \{1, 6\}, \{1, 7\}, \{2, 6\}, \{2, 7\}\big\}$ \\ 
  $\big\{ \{6, 6\}\big\}$ &  & $\big\{ \{6, 6\}, \{1, 2\}, \{1, 6\}, \{1, 7\}, \{2, 6\}, \{2, 7\}\big\}$
  \\
  \hline
  $\big\{ \{3, 3\}\big\}$ & $X_{\ell^c_{12}}$ & $\big\{\{3,3\}\big\} \cup X_{\ell^c_{12}}$ \\ 
  \hline
  $\big\{ \{1, 1\}\big\}$ & $P_{\{1, 2, 3\}}$ & $\big\{\{1, 1\}\big\} \cup P_{\{1, 2, 3\}}$ \\ 
  \hline
    \caption{\label{gnsLastcase} $T - X = \big\{\{i, i\}\big\}$ for some $i \in I$.}
      \end{longtable}

\subsection{$T - X$ of cardinality 2}
Let $T$ be a generalised nice set satisfying that $T - X$ is generalised nice 
with all its elements taking the form $\{i, i\}$ for $i \in I$. 
The result below is an immediate consequence of  Theorem~\ref{todo}(a).
 
\begin{cor}\label{le_conii2}
Let $S\subseteq X$ be a   generalised nice set, and $i,j\in I$, $i\ne j$.
Then $S_{\{i, j\}}$ is a generalised nice set if and only if  
$i,j\notin J_S$.  
\end{cor}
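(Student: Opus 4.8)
The plan is to recognise $S_{\{i,j\}}$ as a direct instance of the family $S_J$ already treated in Theorem~\ref{todo}(a), namely the case $J=\{i,j\}$, and to read the criterion off from there. Recall that $S_J = S\cup\{\{j,j\}:j\in J\}$, so $S_{\{i,j\}} = S\cup\{\{i,i\},\{j,j\}\}$ is exactly $S_J$ for the two-element set $J=\{i,j\}$; the hypothesis $i\ne j$ simply guarantees that $J$ has cardinality two and otherwise plays no role in the argument.

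By Theorem~\ref{todo}(a), $S_J$ is a generalised nice set if and only if $J\cap J_S=\emptyset$, where $J_S=\{a*b:\{a,b\}\in S\}$. Specialising to $J=\{i,j\}$, the condition $J\cap J_S=\emptyset$ unpacks into the two simultaneous requirements $i\notin J_S$ and $j\notin J_S$, which is precisely the statement to be proved. Equivalently, one may cite the reformulation $S\cap X^{(j)}=\emptyset$ for each $j\in J$, but no further verification is required either way.

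The point I would emphasise is that there is no genuine obstacle to overcome here: all the content resides in Theorem~\ref{todo}(a), whose proof already dispatched both directions, handling necessity through Lemma~\ref{condition} and sufficiency by checking that $\{k,l\},\{k*l,m\}\in S_J$ forces $P_{\{k,l,m\}}\subseteq S_J$. The corollary is merely the restatement of that equivalence in the two-element case, which is the form needed to populate the cardinality-$2$ table. Consequently, the real work ahead lies not in this proof but in the ensuing enumeration of the resulting sets up to collineation, exactly as in the cardinality-$1$ discussion following Corollary~\ref{le_conii}.
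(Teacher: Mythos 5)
Your proposal is correct and coincides with the paper's own treatment: the paper states this corollary as an immediate consequence of Theorem~\ref{todo}(a), specialised to the two-element set $J=\{i,j\}$, exactly as you do. Nothing further is required.
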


As we increase the cardinality of $T - X$, the size of the resulting $T$ (obtained by computing the union of the sets listed in the two first columns) obviously gets bigger. Due to this reason, we have chosen to leave the third column (as per in Table \ref{gnsLastcase}) out in the next few tables.  


\begin{longtable} [c] { | c | c|  }
    \hline
    \multicolumn{2} { | c | }{{\bf Gns $T$ such that both $T \cap X$ and $T - X$ are non-empty gns}} \\
   \hline
  $T - X$ & $T \cap X$ \\
    \hline
    \endfirsthead
    \hline
    $T - X$ & $T \cap X$ \\
    \hline
    \endhead
    \hline
    \endfoot
    \hline
    \endlastfoot
    $\big\{ \{1, 1\}, \{2, 2\} \big\}$ & $\big\{ \{1, 2\}\big\}$ \\
    $\big\{ \{1, 1\}, \{3, 3\} \big\}$ & \\
    $\big\{ \{3, 3\}, \{4, 4\} \big\}$ & \\
    $\big\{ \{3, 3\}, \{6,6\} \big\}$ & \\
    \hline
    $\big\{ \{1, 1\}, \{2, 2\} \big\}$ & $\big\{ \{1, 2\}, \{1, 3\}\big\}$ \\
    $\big\{ \{1, 1\}, \{4, 4\}\big\}$ &  \\
    $\big\{ \{1, 1\}, \{7, 7\}\big\}$ & \\
    $\big\{ \{2, 2\}, \{3, 3\} \big\}$ &  \\
    $\big\{ \{2, 2\}, \{4, 4\} \big\}$ &  \\
    $\big\{ \{2, 2\}, \{7, 7\} \big\}$ &  \\
    $\big\{ \{4, 4\}, \{7, 7\} \big\}$ &  \\
   \hline 
   $\big\{ \{1, 1\}, \{2, 2\} \big\}$ & $\big\{ \{1, 2\}, \{6, 7\}\big\}$ \\
    $\big\{ \{1, 1\}, \{3, 3\}\big\}$ &  \\
    $\big\{ \{1, 1\}, \{6, 6\}\big\}$ &  \\
$\big\{ \{3, 3\}, \{4, 4\}\big\}$ &  \\
    \hline
    $\big\{ \{2, 2\}, \{3, 3\} \big\}$ & $\big\{ \{2, 5\}, \{3, 6\}, \{4, 7\}\big\}$ \\
   $\big\{ \{2, 2\}, \{5, 5\} \big\}$ & \\
   \hline
    $\big\{ \{1, 1\}, \{2, 2\} \big\}$ & $\big\{ \{1, 2\}, \{1, 3\}, \{1, 4\}\big\}$ \\
    $\big\{ \{2, 2\}, \{3, 3\} \big\}$ &  \\
       \hline
 $\big\{ \{1, 1\}, \{2, 2\}\big\}$ & $\big\{ \{1, 2\}, \{1, 3\}, \{1, 7\}\big\}$ \\
  $\big\{ \{2, 2\}, \{3, 3\} \big\}$ &  \\
 \hline
  $\big\{ \{1, 1\}, \{2, 2\} \big\}$ & $\big\{ \{1, 2\}, \{1, 6\}, \{2, 6\}\big\}$ \\
  $\big\{\{1, 1\}, \{7, 7\}\big\}$ &  \\
  \hline
  $\big\{ \{1, 1\}, \{2, 2\}\big\}$ & $\big\{ \{1, 2\}, \{1, 6\}, \{6, 7\}\big\}$ \\
  $\big\{ \{1, 1\}, \{4, 4\} \big\}$ &  \\
  $\big\{ \{1, 1\}, \{6, 6\}\big\}$ &  \\
  $\big\{ \{1, 1\}, \{7, 7\}\big\}$ &  \\
  $\big\{ \{2, 2\}, \{4, 4\}\big\}$ &  \\
  $\big\{ \{2, 2\}, \{7, 7\}\big\}$ &  \\
  \hline
  $\big\{ \{1, 1\}, \{2, 2\} \big\}$ & $\big\{ \{1, 2\}, \{1, 6\}, \{1, 7\}, \{2, 6\}\big\}$ \\
  $\big\{ \{1, 1\}, \{7, 7\} \big\}$ & \\
  $\big\{ \{2, 2\}, \{6, 6\} \big\}$ & \\ 
  $\big\{\{2, 2\}, \{7, 7\}\big\}$ &  \\
  \hline  
$\big\{ \{1, 1\}, \{2, 2\}\big\}$ & $\big\{ \{1, 2\}, \{1, 6\}, \{2, 7\}, \{6, 7\}\big\}$ \\
  $\big\{\{1, 1\}, \{4, 4\}\big\}$ & \\ 
   $\big\{\{1, 1\}, \{7, 7\}\big\}$ & \\ 
  \hline 
  $\big\{ \{1, 1\}, \{2, 2\} \big\}$ & $\big\{ \{1, 2\}, \{1, 6\}, \{1, 7\}, \{2, 6\}, \{2, 7\}\big\}$ \\
  $\big\{ \{1, 1\}, \{6, 6\} \big\}$ &  
  \\
  $\big\{ \{6, 6\}, \{7, 7\} \big\}$ &  
  \\
  \hline  
  $\big\{ \{3, 3\}, \{4, 4\} \big\}$ & $X_{\ell^c_{12}}$ \\
  \hline 
  $ \big\{ \{1, 1\}, \{2, 2\}\big\} $ & $P_{\{1, 2, 3\}}$ \\
  \hline 
\caption{\label{gnsLastcase2} $T - X = \big\{\{i, i\}, \{j, j\}\big\}$ for $i, j \in I$ distinct.}
\end{longtable}
 
  
 Taking into account Corollary~\ref{le_conii2}, 
 the only difficulty in   assembling this table lies in avoiding collinear sets.
 We illustrate some examples.
 For $S=\big\{ \{1, 2\}\big\}$, any $S_{\{i,j\}}$ with $i,j\in I-\{5\}$ is generalised nice. The cases to be distinguished are: both $i,j\in\{1,2\}$; only one index (we can assume the index $1$) belonging to $\{1,2\}$; and $i,j\notin\{1,2,5\}$. In the last case, we have two possibilities, according to $i*j=5=1*2$  or $i*j\ne5$. 
 In this way, $S_{\{3, 4\}}$  is not collinear to $S_{\{3,6\}}$.
 
  For each possible $S$ the discussion is slightly different, always keeping in mind that a choice of 3 indices (each non-zero and together not constituting a line)  describes a unique collineation. As a second example, for $S=\big\{ \{1, 2\}, \{6, 7\}\big\}$, any $S_{\{i,j\}}$ with $i,j\in I-\{5\}$ is generalised nice. If $i\in\{1,2,6,7\}$, we can assume that $i=1$, and then we have to distinguish  between $j=2$ (the companion of 1), $j\in\{6,7\},$ and $j\in\{3,4\}$.
   This gives just 3 possibilities with $i=1$. Otherwise, both $i,j\in\{3,4\}$ and we  find only  one more equivalence class:  $S_{\{3,4\}}$. 

  We demonstrate one final example, $S=\big\{ \{1, 2\}, \{1, 3\}\big\}$, and leave the rest to the reader. If a collineation $\sigma$ sends $S_{\{i,j\}}$ to $S_{\{i',j'\}}$ (these sets are generalised nice if and only if  $i,j,i',j'\ne 5,6$), then  $\sigma(1)=1$ and $\sigma$ preserves $\{2,3\}$. Ergo, either $\sigma=\id$, or $\sigma(2)=3$ and $\sigma(3)=2$. In either case, $\sigma(4)=4$ (since $4=1*2*3$) and $\sigma(7)=7$. This means that $S_{\{ 1,2\}}$ ($\sim_c S_{\{1,3 \}}$), $S_{\{1,4 \}},$ and $S_{\{1,7 \}}$ are pairwise not collinear. Also, if at least one of the indices $i,j$ is in $\{2,3\}$ we can assume $i=2,$ and then $j=3,4,7$. This leaves three pairwise not collinear  generalised nice sets (since $3\in I_S-J_S$, while $4,7\in I-I_S$ but
  $7=2*3$ belongs to $ (I_S-J_S)*(I_S-J_S)$ and 4 does not). Finally, if $i,j\notin\{1,2,3\}$, then both $i,j\in\{4,7\}$, giving 
  rise to only one more generalised nice set; the seventh possibility attached to $S$. 
 The remaining cases do not present any additional difficulties.
 \smallskip
 
In Table \ref{gnsLastcase2} we   covered one case where $T - X$ has cardinal 2. 
The only other possibility for $T - X$ is
$\big\{\{0, 0\}, \{0, i\}\big\}$, for any $i \in I$. Now Theorem~\ref{todo} implies:

\begin{cor}\label{le_0i00}
  Let   $S\subseteq X$ be a   generalised nice set, 
and $i \in I$.  The set 
$S'_{\{  i\}}= S\cup \big\{\{0, 0\},\{0 ,  i\}\big\}$   is a generalised nice set 
if and only if    $i\notin I_S$.
\end{cor}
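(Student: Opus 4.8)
The plan is to recognise Corollary~\ref{le_0i00} as nothing more than the singleton instance of Theorem~\ref{todo}(b). In the notation of that theorem, the set $S'_{\{i\}}$ of the corollary is exactly $S'_J$ for the one-element index set $J=\{i\}\subseteq I$. First I would record the elementary but decisive observation that $|J|=1\le 2$; this is precisely the hypothesis under which part~(i) of Theorem~\ref{todo}(b) strengthens the implication ``$J\cap I_S=\emptyset\Rightarrow S'_J$ is generalised nice'' into a full equivalence. Invoking that equivalence immediately gives that $S'_{\{i\}}$ is a generalised nice set if and only if $\{i\}\cap I_S=\emptyset$, and since $J$ is a singleton this condition reads literally as $i\notin I_S$, which is the claim.

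The only point that needs a moment's care is the degenerate translation of the Fano-plane condition, and this is where the singleton assumption does all the work. In general, part~(b) of Theorem~\ref{todo} characterises $S'_J$ being generalised nice by the requirement that, for every $\{a,b\}\in S$, one has either $\ell_{ab}\cap J=\emptyset$ or $\ell_{ab}\subseteq J$. The possibility $\ell_{ab}\subseteq J$ is simply unavailable when $J=\{i\}$, because a line of the Fano plane has three points and so cannot be contained in a one-element set. Hence the characterisation collapses to $\ell_{ab}\cap\{i\}=\emptyset$ for all $\{a,b\}\in S$, that is, $i\notin\bigcup_{\{a,b\}\in S}\ell_{ab}=I_S$. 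There is no genuine obstacle here: the hard direction was already carried out in the proof of Theorem~\ref{todo}(b)(i), and it applies verbatim to $J=\{i\}$.

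As a self-contained alternative (useful if one prefers not to defer to the sharp form of the theorem), I would run the two implications directly using Lemma~\ref{0ielements}. For the forward direction, assume $S'_{\{i\}}$ is generalised nice and suppose for contradiction that $i\in I_S$, so that $i\in\ell_{ab}$ for some $\{a,b\}\in S$; since $\{0,i\}\in S'_{\{i\}}$, the two cases $i\in\{a,b\}$ and $i=a\ast b$ are handled respectively by Lemma~\ref{0ielements}(i) and Lemma~\ref{0ielements}(ii), each forcing a further element $\{0,c\}$ with $c\notin\{0,i\}$ into $S'_{\{i\}}-X$, which contradicts $S'_{\{i\}}-X=\big\{\{0,0\},\{0,i\}\big\}$. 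For the converse, $i\notin I_S$ yields $\ell_{ab}\cap\{i\}=\emptyset$ for every $\{a,b\}\in S$, so the combination criterion of Theorem~\ref{todo}(b) is satisfied and $S'_{\{i\}}$ is generalised nice. Either route establishes the equivalence with essentially no computation.
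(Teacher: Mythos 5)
Your proposal is correct and takes essentially the same route as the paper: the paper derives Corollary~\ref{le_0i00} precisely as the instance $J=\{i\}$ of Theorem~\ref{todo}(b)(i), where $|J|=1\le 2$ turns the sufficient condition $J\cap I_S=\emptyset$ into an equivalence (the paper's entire proof is the phrase ``Now Theorem~\ref{todo} implies''). Your supplementary direct argument via Lemma~\ref{0ielements} is also sound, but it is extra work the paper does not perform.
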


In particular,
there does not exist $i \in I$ such  that  $S \cup \big\{\{0, 0\}, \{0, i\}\big\} $ is generalised nice, for $S$ any of the sets below. In fact, for all these cases $I_S=I$ according to Table~\ref{IsyJs}.
\begin{equation}\label{losdeI_S=I}
 \begin{array}{c}
 \big\{\{2, 5\}, \{3, 6\}, \{4, 7\}\big\}; \ 
 \big\{\{1, 2\}, \{1, 3\}, \{1, 4\}\big\};\ 
 \big\{\{1, 2\}, \{1, 3\}, \{1, 7\}\big\};\\
 \big\{\{1, 2\}, \{1, 6\}, \{1, 7\}, \{2, 6\}\big\};\ 
 \big\{\{1, 2\}, \{1, 6\}, \{1, 7\}, \{2, 6\}, \{2, 7\} \big\};\ 
 X_{\ell^c_{12}};  \ 
 P_{\{1, 2, 3\}}.
 \end{array}
 \end{equation}
 We obtain fewer cases now, since $I_S$ contains strictly $J_S$:

\smallskip

\begin{longtable} [c] { | c | c|  }
    \hline
    \multicolumn{2} { | c | }{{\bf Gns $T$ such that both $T \cap X$ and $T - X$ are non-empty gns}} \\
   \hline
  $T - X$ & $T \cap X$ \\
    \hline
    \endfirsthead
    \hline
    $T - X$ & $T \cap X$ \\
    \hline
    \endhead
    \hline
    \endfoot
    \hline
    \endlastfoot
    $\big\{ \{0, 0\}, \{0, 3\} \big\}$ & $\big\{ \{1, 2\}\big\}$ \\
    \hline
    $\big\{ \{0, 0\}, \{0, 4\} \big\}$ & $\big\{ \{1, 2\}, \{1, 3\}\big\}$ \\
    $\big\{ \{0, 0\}, \{0, 7\}\big\}$ &  \\
   \hline 
   $\big\{ \{0, 0\}, \{0, 3\} \big\}$ & $\big\{ \{1, 2\}, \{6, 7\}\big\}$ \\
    \hline
  $\big\{ \{0, 0\}, \{0, 7\} \big\}$ & $\big\{ \{1, 2\}, \{1, 6\}, \{2, 6\}\big\}$ \\
  \hline
  $\big\{ \{0, 0\}, \{0, 4\}\big\}$ & $\big\{ \{1, 2\}, \{1, 6\}, \{6, 7\}\big\}$ \\
  \hline
 $\big\{ \{0, 0\}, \{0, 4\}\big\}$ & $\big\{ \{1, 2\}, \{1, 6\}, \{2, 7\}, \{6, 7\}\big\}$ \\
  \hline
  \caption{\label{gnsLastcase3} $T - X = \big\{\{0, 0\}, \{0, i\}\big\}$ for $i \in I$.}
\end{longtable}

 The possibilities, up to collineation, are easy to analyse by keeping in mind Corollary~\ref{le_0i00}. 
For  $S=\big\{ \{1, 2\}\big\}$, $S'_{\{ i\}}$  is generalised nice as long as $i\ne 1,2,5$. Here the four sets  $S'_{\{ 3\}}$, $S'_{\{ 4\}}$, $S'_{\{6 \}},$ and $S'_{\{7 \}}$  are all collinear, since $\tilde\sigma_{12k}(S'_{\{ 3\}})=S'_{\{ k\}}$ for any $k=4,6,7$. 

In contrast, for the case $S=\big\{ \{1, 2\},\{1, 3\}\big\}$, only two sets are generalised nice; $S'_{\{ 4\}}$  and $S'_{\{ 7\}}$.
They are, however, not collinear because any collineation $\sigma\ne\id$ with $\tilde\sigma(S)=S$ must also satisfy $\sigma(1)=1$ and $\sigma(2)=3$, which implies $\sigma(4)=4$.

Third, we can add $\big\{\{0, 3\},\{0 ,  0\}\big\}$ and $\big\{\{0, 4\},\{0 ,  0\}\big\}$ to $S= \big\{ \{1, 2\},\{6, 7\}\big\}$ 
but the obtained sets are collinear (an appropriate collineation would be $ \sigma_{124}$). 
Finally, as $|I-I_S|=1$ in remaining cases, there is only one index such that $S'_{\{ i\}}$  is generalised nice.

\subsection{$T - X$ of cardinality 3} 

As per in Table \ref{gnsEmptyX} there are several different types of generalised nice sets contained in $X_0 - X.$ Namely: 

\begin{itemize}
\item[-] $\big\{ \{i, i\}, \{j, j\}, \{k, k\} \big\},$ for $i, j, k \in I$ generative;
\item[-] $\big\{ \{i, i\}, \{j, j\}, \{i \ast j, i \ast j\} \big\},$ for $i, j \in I$ distinct;
\item[-] $\big\{ \{0, 0\}, \{0, i\}, \{0, j\} \big\},$ for $i, j \in I$ distinct;
\item[-] $\big\{ \{0, 0\}, \{0, i\}, \{i, i\} \big\},$ for $i \in I$.
\end{itemize}

We begin by   recalling what Theorem~\ref{todo} says in the first two cases,
%
\begin{cor}\label{le_conii3}
If $S\subseteq X$ is a   generalised nice set,  
and $i,j,k\in I$ are distinct, then  
$S_{\{i,j,k\}}=S\cup \big\{\{i, i\},\{j,j\},\{k,k\}\big\}$   is a generalised nice set 
if and only if        $ i,j,k\notin J_S$.
\end{cor}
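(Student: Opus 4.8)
The plan is to invoke Theorem~\ref{todo}(a) directly, specialising the index set $J$ to the three-element set $\{i,j,k\}$. Theorem~\ref{todo}(a) already characterises exactly when $S_J=S\cup\{\{j,j\}:j\in J\}$ is a generalised nice set, namely precisely when $J\cap J_S=\emptyset$, where $J_S=\{a\ast b:\{a,b\}\in S\}$. Setting $J=\{i,j,k\}$ immediately reformulates the condition $J\cap J_S=\emptyset$ as the requirement that none of $i$, $j$, $k$ lies in $J_S$.

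First I would note that, since $i,j,k$ are pairwise distinct, the set $J:=\{i,j,k\}$ has three elements and $S_J=S\cup\{\{i,i\},\{j,j\},\{k,k\}\}=S_{\{i,j,k\}}$, so that the notation of the corollary matches that of the theorem. Then the equivalence $J\cap J_S=\emptyset \Longleftrightarrow i,j,k\notin J_S$ is just the elementary set-theoretic fact that a finite set is disjoint from $J_S$ exactly when each of its members avoids $J_S$. Combining these two observations yields the stated biconditional.

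There is essentially no obstacle here: the whole content has already been established in Theorem~\ref{todo}(a), where the equivalence between $S\cap X^{(j)}=\emptyset$ for all $j\in J$ and $J\cap J_S=\emptyset$ was proved. The only remaining task is the bookkeeping translation between the general index set $J$ and the concrete triple $\{i,j,k\}$, which is immediate. Hence the corollary follows with no further work, and I would present it simply as the specialisation of Theorem~\ref{todo}(a) to $J=\{i,j,k\}$.
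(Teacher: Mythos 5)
Your proposal is correct and matches the paper exactly: the paper presents Corollary~\ref{le_conii3} as an immediate specialisation of Theorem~\ref{todo}(a) to $J=\{i,j,k\}$, with the condition $J\cap J_S=\emptyset$ unpacked as $i,j,k\notin J_S$, which is precisely your argument.
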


From here, all the sets obtained as union of the two columns of the following table are generalised nice sets. 
We have  first considered the case where $i,j,k$ are generative.
\smallskip

\begin{longtable} [c] { | c | c|  }
    \hline
    \multicolumn{2} { | c | }{{\bf Gns $T$ such that both $T \cap X$ and $T - X$ are non-empty gns}} \\
   \hline
  $T - X$ & $T \cap X$ \\
    \hline
    \endfirsthead
    \hline
    $T - X$ & $T \cap X$ \\
    \hline
    \endhead
    \hline
    \endfoot
    \hline
    \endlastfoot
    $\big\{ \{1, 1\}, \{2, 2\}, \{3, 3\} \big\}$ & $\big\{ \{1, 2\}\big\}$ \\
    $\big\{ \{1, 1\}, \{3, 3\}, \{4, 4\} \big\}$ & \\
    $\big\{ \{3, 3\}, \{4, 4\}, \{6, 6\} \big\}$ & \\
    \hline
    $\big\{ \{1, 1\}, \{2, 2\}, \{3, 3\}  \big\}$ & $\big\{ \{1, 2\}, \{1, 3\}\big\}$ \\
    $\big\{ \{1, 1\}, \{2, 2\}, \{4, 4\} \big\}$ &  \\
    $\big\{ \{1, 1\}, \{2, 2\}, \{7, 7\}\big\}$ & \\
     $\big\{ \{2, 2\}, \{3, 3\}, \{4, 4\}\big\}$ & \\
    $\big\{ \{2, 2\}, \{4, 4\}, \{7, 7\} \big\}$ &  \\
    \hline 
    $\big\{ \{1, 1\}, \{2, 2\}, \{3, 3\} \big\}$ & $\big\{ \{1, 2\}, \{6, 7\}\big\}$ \\
    $\big\{ \{1, 1\}, \{2, 2\}, \{6, 6\}\big\}$ &  \\
    $\big\{ \{1, 1\}, \{3, 3\}, \{4, 4\}\big\}$ &  \\
    $\big\{ \{1, 1\}, \{3, 3\}, \{7, 7\}\big\}$ &  \\
    \hline
    $\big\{ \{2, 2\}, \{3, 3\}, \{4, 4\} \big\}$ & $\big\{ \{2, 5\}, \{3, 6\}, \{4, 7\}\big\}$ \\
    $\big\{ \{2, 2\}, \{3, 3\}, \{5, 5\} \big\}$ & \\
    \hline
    $\big\{ \{1, 1\}, \{2, 2\}, \{3, 3\} \big\}$ & $\big\{ \{1, 2\}, \{1, 3\}, \{1, 4\}\big\}$ \\
    $\big\{ \{2, 2\}, \{3, 3\}, \{4, 4\} \big\}$ &  \\ 
    \hline
    $\big\{ \{1, 1\}, \{2, 2\}, \{3, 3\} \big\}$ & $\big\{ \{1, 2\}, \{1, 3\}, \{1, 7\}\big\}$ \\
    \hline
    $\big\{ \{1, 1\}, \{2, 2\}, \{6, 6\}\big\}$ & $\big\{ \{1, 2\}, \{1, 6\}, \{2, 6\}\big\}$ \\
    $\big\{\{1, 1\}, \{2, 2\}, \{7, 7\}\big\}$ &  \\
  \hline
  $\big\{ \{1, 1\}, \{2, 2\}, \{4, 4\} \big\}$ & $\big\{ \{1, 2\}, \{1, 6\}, \{6, 7\}\big\}$ \\ 
  $\big\{\{1, 1\}, \{2, 2\}, \{6, 6\}\big\}$ & \\ 
   $\big\{\{1, 1\}, \{2, 2\}, \{7, 7\}\big\}$ & \\ 
    $\big\{\{1, 1\}, \{4, 4\}, \{6, 6\}\big\}$ & \\ 
     $\big\{\{2, 2\}, \{4, 4\}, \{7, 7\}\big\}$ & \\ 
  \hline
  $\big\{ \{1, 1\}, \{2, 2\}, \{6, 6\} \big\}$ & $\big\{ \{1, 2\}, \{1, 6\}, \{1, 7\}, \{2, 6\}\big\}$ \\
   $\big\{ \{1, 1\}, \{2, 2\}, \{7, 7\} \big\}$  & \\
  $\big\{\{2, 2\}, \{6, 6\}, \{7, 7\}\big\}$ &  \\
  \hline
$\big\{ \{1, 1\}, \{2, 2\}, \{4, 4\} \big\}$ & $\big\{ \{1, 2\}, \{1, 6\}, \{2, 7\}, \{6, 7\}\big\}$ \\ 
  $\big\{\{1, 1\}, \{2, 2\}, \{6, 6\}\big\}$ & \\ 
  \hline
  $\big\{ \{1, 1\}, \{2, 2\}, \{6, 6\} \big\}$ & $\big\{ \{1, 2\}, \{1, 6\}, \{1, 7\}, \{2, 6\}, \{2, 7\}\big\}$ \\
  $\big\{ \{1, 1\}, \{6, 6\}, \{7, 7\} \big\}$ &  
  \\
  \hline
  $\big\{ \{3, 3\}, \{4, 4\}, \{6, 6\} \big\}$ & $X_{\ell^c_{12}}$ \\
  \hline
  $\big\{ \{1, 1\}, \{2, 2\}, \{3, 3\} \big\}$ & $P_{\{1, 2, 3\}}$ \\
  \hline
\caption{\label{gnsLastcase4}   $T - X=\big\{ \{i, i\}, \{j, j\}, \{k, k\} \big\}$, $i, j, k  $ generative.  }
\end{longtable}
 
   The difficulty   again lies in being careful with possible collinear sets. Some examples follow.
 For  
  $S =\big\{ \{1, 2\}, \{1, 3\}\big\}$, $S_{\{i,j,k\}}$
  is generalised nice for any distinct $i,j,k\ne 5,6$. This gives   $\binom53=10$ possibilities but many of them are collinear. 
  Let us consider $k\ne i*j$.
  We can assume that $\{i,j,k\}\cap\{1,2,3\}$ is one of: $\{1,2,3\}$, $\{1,2\}$, $\{2,3\}$, $\{1\}$, or $\{2\}$. 
  We consider how many possibilities are related to these cases. 
  If $i=1$ and $j=2$ (with $k\ne3$), then $k$ can be $4$ or $7$. These two possibilities are not collinear because $7=2*3$ is fixed by any collineation preserving the set $\{2,3\}$.
  If $i=2$ and $j=3$, then $k\ne 1,5,6,7$ gives $k=4$. 
  If $i=1$, $j,k\ne 2,3,5,6$, the only possibilities for $j$ and $k$ are $4$ and $7$, but $1*4=7$. 
  If $i=2$, $j,k\ne 1,3,5,6$,  then $j=4$ and $k=7$. 
  Hence, there are 5 pairwise not collinear generalised nice sets $S_{\{i,j,k\}}$ with $k\ne i*j$. 

 Now we discuss another example: 
  $S=\big\{ \{1, 2\}, \{1, 6\}, \{1, 7\}, \{2, 6\}, \{2, 7\}\big\}$.   
  The generalised nice sets are $S_J$ for 
    $ J\subseteq I-J_S= \{1,2,6,7\}$.  Note that 
     the collineations   leave invariant the sets $ \{1,2\}$ and $\{6,7\}$, according to the number of occurrences of these indices in elements of $S$.
  So there are two not collinear $S_J$'s, accordingly to whether   the only index in $  \{1,2,6,7\}-J$ belongs either to $ \{1,2\}$ or to $\{6,7\}$.

 An easy example is $S =X_{\ell_{12}^c}$. Here $S_{\{i,j,k\}}$
  is generalised nice for any choice of $i,j,k\in\{3,4,6,7\}$. The four indices all play the same role, and so any
   $S_{\{i,j,k\}}$ is collinear to any $S_{\{i',j',k'\}}$. 
  The   final example, $S=P_{\{1,2,3\}}$ has $I - J_S=\{1,2,3\}$. So there is only one possibility, namely, $S_{\{1,2,3\}}$.

  The remaining cases can be similarly discussed (no new difficulties arise), and they are left to the reader.
  Besides, to   ensure we have not overlooked any collinear sets, we have employed computer assistance in checking the many cases
  (see Section~\ref{computer}).  
  \smallskip  
 
Suppose now that $T - X$ is of the form $\big\{\{i, i\}, \{j, j\}, \{i \ast j, i\ast j\}\}$, for $i, j \in I$ distinct. As before, Corollary~\ref{le_conii3} 
applies to   reveal that the following generalised nice sets, contained in $X$ (as per in Corollary~\ref{somegns}),  cannot be combined with $T-X$:
\begin{equation}\label{J_Scontieneline}
\begin{array}{c}
\big\{\{1, 2\}, \{1, 3\}, \{1, 4\}\big\};\ 
\big\{\{1, 2\}, \{1, 6\}, \{2, 6\}\big\};\ 
\big\{\{1, 2\}, \{1, 6\}, \{1, 7\}, \{2, 6\}\big\};\\ 
\big\{\{1, 2\}, \{1, 6\}, \{1, 7\}, \{2, 6\}, \{2, 7\} \big\};\ 
X_{\ell^c_{12}};\ 
P_{\{1, 2, 3\}}.
\end{array}
\end{equation}
The reason is that  
 there is a line $\ell  \subseteq  J_S$ for each of  these possible $S$'s   according to Table~\ref{IsyJs}.
 As any two lines intersect, there is no line $\ell_{ij}=\{i,j,i*j\}$ contained in $I-J_S$ and so $S_{\{i,j,i*j\}}$ is not generalised  nice.
 On the contrary, there is a line   contained in $I-J_S$
for  all the remaining sets in Corollary~\ref{somegns}. 
The discussion about the possible collineations is quite simple   now.
%
Only in case $S=\big\{ \{1, 2\}, \{1, 3\}\big\}$,  the two lines contained in $I-J_S=\{1,2,3,4,7\}$ satisfy 
$|\ell_{14}\cap I_S|=1$  while $|\ell_{23}\cap I_S|=2$, so that
  the sets $S_{\ell_{14}}$ and $S_{\ell_{23}}$
are not collinear.   Each of the remaining sets only gives arise to   one possibility up to collineations. 
Indeed, there is only one line in $I-J_S$ if $S=\big\{ \{1, 2\}, \{1, 3\}, \{1, 7\}\big\}$. 
For the sets $S$'s with $J_S=\{j\}$, there are  four lines in $I-J_S$, but $S\subseteq X^{(j)}$ and each pair in $S$ contains exactly
 one index of each line. 
Thus it is easy to find a collineation $\sigma$ with $\tilde\sigma(S)=S$ and $ \sigma(\ell)=\ell'$ for any $\ell,\ell'\subseteq I-J_S$.
Finally, for the   last two sets $S$'s,  there are two lines in $I-\{3,5\}$, namely, $ \ell_{14},\ell_{24}$,
 but $\tilde\sigma_{673}(S_{\ell_{14}})=S_{\ell_{24}}$. 
%
We collect the resulting generalised nice sets in the following table:

\smallskip

\begin{longtable} [c] { | c | c|  }
    \hline
    \multicolumn{2} { | c | }{{\bf Gns $T$ such that both $T \cap X$ and $T - X$ are non-empty gns}} \\
   \hline
  $T - X$ & $T \cap X$ \\
    \hline
    \endfirsthead
    \hline
    $T - X$ & $T \cap X$ \\
    \hline
    \endhead
    \hline
    \endfoot
    \hline
    \endlastfoot
    $\big\{ \{1, 1\}, \{3, 3\}, \{6, 6\} \big\}$ & $\big\{ \{1, 2\}\big\}$ \\
    \hline
    $\big\{ \{1, 1\}, \{4, 4\}, \{7, 7\} \big\}$ & $\big\{ \{1, 2\}, \{1, 3\}\big\}$ \\
    $\big\{ \{2, 2\}, \{3, 3\}, \{7, 7\} \big\}$ &  \\
   \hline 
    $\big\{ \{1, 1\}, \{3, 3\}, \{6, 6\} \big\}$ & $\big\{ \{1, 2\}, \{6, 7\}\big\}$ \\
    \hline
    $\big\{ \{2, 2\}, \{3, 3\}, \{7, 7\} \big\}$ & $\big\{ \{2, 5\}, \{3, 6\}, \{4, 7\}\big\}$ \\
   \hline
   $\big\{ \{2, 2\}, \{3, 3\}, \{7, 7\} \big\}$  & $\big\{ \{1, 2\}, \{1, 3\}, \{1, 7\}\big\}$ \\
  \hline
  $\big\{ \{1, 1\}, \{4, 4\}, \{7, 7\} \big\}$ & $\big\{ \{1, 2\}, \{1, 6\}, \{6, 7\}\big\}$ \\
  \hline
$\big\{ \{1, 1\}, \{4, 4\}, \{7, 7\} \big\}$ & $\big\{ \{1, 2\}, \{1, 6\}, \{2, 7\}, \{6, 7\}\big\}$ \\
  \hline
  \caption{\label{gnsLastcase5} $T - X = \big\{\{i, i\}, \{j, j\}, \{i \ast j, i \ast j \}\big\}$ for $i\ne  j  $.}
\end{longtable}

 
Next, we assume that our $T - X$ (the part of $T$ contained in $X_0 - X$) is of the form $\big\{\{0, 0\}, \{0, i\}, \{0, j\}\}$, for some distinct $i, j \in I$. 
  Again, Theorem~\ref{todo} makes it possible to rule out a lot of cases, since $|I_S|\ge6$ whenever $S$ belongs to 
  $$
  \begin{array}{c}
  \big\{\{2, 5\}, \{3, 6\}, \{4, 7\}\big\}; \ 
  \big\{\{1, 2\}, \{1, 3\}, \{1, 4\}\big\};\ 
   \big\{\{1, 2\}, \{1, 3\}, \{1, 7\}\big\};\\
   \big\{\{1, 2\}, \{1, 6\}, \{2, 6\}\big\};\ 
    \big\{\{1, 2\}, \{1, 6\}, \{6, 7\}\big\}; \ 
    \big\{\{1, 2\}, \{1, 6\}, \{1, 7\}, \{2, 6\}\big\}; \\
   \big\{\{1, 2\}, \{1, 6\}, \{2, 7\}, \{6, 7\}\big\}; \ 
   \big\{\{1, 2\}, \{1, 6\}, \{1, 7\}, \{2, 6\}, \{2, 7\} \big\}; \ 
   X_{\ell^c_{12}} ; \ 
     P_{\{1, 2, 3\}}.
     \end{array}
  $$

This theorem also tells us that there is only one set of the form $\big\{\{0, 0\}, \{0, i\}, \{0, j\}\}$
to be combined with $S=\big\{ \{1, 2\}, \{1, 3\}\big\}$, since $i,j$ has to belong to $ I-I_S= \{4,7\}$. The situation is the same for $S=\big\{ \{1, 2\}, \{6,7\}\big\}$. On the other hand  $\big\{\{0, 0\}, \{0, i\}, \{0, j\},\{1, 2\}\}  $ is generalised  nice if and only if $i,j\in\{3,4,6,7\}$. There are just two not collinear possibilities now: if $i*j=1*2$ and if $i*j\in\{1,2\}$.
To summarize,  we obtain the following generalised nice sets up to collineations:
%

\begin{longtable} [c] { | c | c|  }
    \hline
    \multicolumn{2} { | c | }{{\bf Gns $T$ such that both $T \cap X$ and $T - X$ are non-empty gns}} \\
   \hline
  $T - X$ & $T \cap X$ \\
    \hline
    \endfirsthead
    \hline
    $T - X$ & $T \cap X$ \\
    \hline
    \endhead
    \hline
    \endfoot
    \hline
    \endlastfoot
    $\big\{ \{0, 0\}, \{0, 3\}, \{0, 4\} \big\}$ & $\big\{ \{1, 2\}\big\}$ \\
     $\big\{ \{0, 0\}, \{0, 3\}, \{0, 7\} \big\}$ &  \\
    \hline
    $\big\{ \{0, 0\}, \{0, 4\}, \{0, 7\} \big\}$ & $\big\{ \{1, 2\}, \{1, 3\}\big\}$ \\
   \hline 
   $\big\{ \{0, 0\}, \{0, 3\}, \{0, 4\} \big\}$ & $\big\{ \{1, 2\}, \{6, 7\}\big\}$ \\
    \hline
  \caption{\label{gnsLastcase6} $T - X \subseteq X_F$, $|T-X|=3$.}
\end{longtable}

To finish the cardinality 3 section, we assume now that $T - X=\big\{\{0, 0\}, \{0, i\}, \{i, i\} \big\}$, for some $i \in I$. 
  Any generalised nice set $S$ with $I_S=I$  cannot be combined with $T-X.$ That is, exactly
those listed in Eq.~\eqref{losdeI_S=I}.
 
From Theorem~\ref{todo}(c),  $\tilde S_i=S\cup \big\{\{0, 0\}, \{0, i\}, \{i, i\}  \big\}$ is generalised nice if and only if $i\notin I_S$,  
for any of the remaining sets $S$ in Corollary~\ref{somegns}.
Thus $\big\{\{0, 0\}, \{0, i\}, \{i, i\},\{1,2\} \big\}$ is generalised  nice for any $i\in\{3,4,6,7\}$, although   the four possibilities are clearly collinear. 
The same happens for $\big\{\{0, 0\}, \{0, i\}, \{i, i\},\{1,2\} ,\{6,7\} \big\}$ and $i=3,4$. On the contrary,  the only two generalised nice sets in the form  $\big\{\{0, 0\}, \{0, i\}, \{i, i\},\{1,2\} ,\{1,3\} \big\}$, obtained for $i=4$ and $7$, are not collinear. 
Lastly, for the three remaining sets with $|S|\ge3$, we have $|I_S|=6$ and only one index $i$ that makes $\tilde S_i$  generalised  nice.

\begin{longtable} [c] { | c | c|  }
    \hline
    \multicolumn{2} { | c | }{{\bf Gns $T$ such that both $T \cap X$ and $T - X$ are non-empty gns}} \\
   \hline
  $T - X$ & $T \cap X$ \\
    \hline
    \endfirsthead
    \hline
    $T - X$ & $T \cap X$ \\
    \hline
    \endhead
    \hline
    \endfoot
    \hline
    \endlastfoot
    $\big\{ \{0, 0\}, \{0, 3\}, \{3, 3\} \big\}$ & $\big\{ \{1, 2\}\big\}$ \\
    \hline
    $\big\{ \{0, 0\}, \{0, 4\}, \{4, 4\} \big\}$ & $\big\{ \{1, 2\}, \{1, 3\}\big\}$ \\
    $\big\{ \{0, 0\}, \{0, 7\}, \{7, 7\} \big\}$ &  \\
   \hline 
   $\big\{ \{0, 0\}, \{0, 3\}, \{3, 3\} \big\}$ & $\big\{ \{1, 2\}, \{6, 7\}\big\}$ \\
    \hline
    $\big\{ \{0, 0\}, \{0, 7\}, \{7, 7\} \big\}$ & $\big\{ \{1, 2\}, \{1, 6\}, \{2, 6\}\big\}$ \\
    \hline
    $\big\{ \{0, 0\}, \{0, 4\}, \{4, 4\} \big\}$ & $\big\{ \{1, 2\}, \{1, 6\}, \{6, 7\}\big\}$ \\
     \hline
    $\big\{ \{0, 0\}, \{0, 4\}, \{4, 4\} \big\}$ & $\big\{ \{1, 2\}, \{1, 6\}, \{2, 7\}, \{6, 7\}\big\}$ \\
    \hline
 \caption{\label{gnsLastcase7} $T - X = P_{\{0,i,i\}}$ for $i \in I$.}
\end{longtable}


\subsection{$T - X$ of cardinality 4} 

As per   Table \ref{gnsEmptyX} there are four different types of generalised nice set contained in $X_0 - X$; namely: 
\begin{itemize}
\item[-] $\big\{ \{i, i\}, \{j, j\}, \{k, k\}, \{l,l\} \big\},$ for $i, j, k,l \in I$ such that any three are generative;
\item[-] $\big\{ \{i, i\}, \{j, j\}, \{i \ast j, i \ast j\}, \{k, k\} \big\},$ for $i, j \in I$ distinct and $k \notin \ell_{ij}$;
\item[-] $\big\{ \{0, 0\}, \{0, i\}, \{0, j\}, \{0, k\} \big\},$ for $i, j, k \in I$ generative;
\item[-] $\big\{ \{0, 0\}, \{0, i\}, \{0, j\}, \{0, i \ast j\} \big\},$ for $i, j \in I$ distinct.
\end{itemize}

We study again which of them can be added to the generalised nice sets in Corollary~\ref{somegns}.

\begin{cor} \label{varioscardinal4}
 Let  $S\subseteq X$ be a  non-empty generalised nice set. 
 Then 
\begin{itemize}
\item[\rm(a)] $S\cup \big\{ \{i, i\}, \{j, j\}, \{k, k\}, \{l,l\} \big\},$ for $i, j, k,l \in I$ distinct, 
is a generalised nice set if and only if  $\{i,j,k,l\}\cap J_S=\emptyset$.
\item[\rm(b)] $S\cup \big\{ \{0, 0\}, \{0, i\}, \{0, j\}, \{0, k\} \big\},$ for $i, j, k \in I$ generative,
is a generalised nice set if and only if $\{i,j,k\}\cap I_S=\emptyset$. In particular, $|S|=1$.
\item[\rm(c)]   $ S\cup \big\{ \{0, 0\}, \{0, i\}, \{0, j\}, \{0, i*j\} \big\},$ for $i, j \in I$ distinct,
is a generalised nice set if and only if     $\{i,j,i*j \}=I_S$.  In particular, $|S|=1$.
\end{itemize}
\end{cor}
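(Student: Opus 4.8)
The plan is to derive all three equivalences as direct specialisations of Theorem~\ref{todo}, reading off in each case the index condition dictated by the shape of $T-X$.

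For part (a) I would simply invoke Theorem~\ref{todo}(a). The set in question is precisely $S_J$ with $J=\{i,j,k,l\}$ (and $T-X\subseteq X_E^*$ is automatically generalised nice by Proposition~\ref{way}(c)), so the criterion for it to be generalised nice is exactly $J\cap J_S=\emptyset$, i.e. $\{i,j,k,l\}\cap J_S=\emptyset$. I would remark that this single statement covers both four-element shapes from the preceding list, since the definition of $J_S$ is insensitive to whether three of the indices happen to be collinear.

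For parts (b) and (c) the tool is Theorem~\ref{todo}(b): writing $J$ for the index set of $T-X$, the combination $S'_J$ is generalised nice exactly when every edge $\{a,b\}\in S$ satisfies $\ell_{ab}\cap J=\emptyset$ or $\ell_{ab}\subseteq J$. The argument then turns on discarding whichever alternative cannot occur. In case (b), where $J=\{i,j,k\}$ with $i,j,k$ generative, the set $J$ is not a line and hence contains no line, so $\ell_{ab}\subseteq J$ is impossible; the criterion collapses to $\ell_{ab}\cap J=\emptyset$ for all $\{a,b\}\in S$, that is $I_S\cap\{i,j,k\}=\emptyset$. This forces $I_S\subseteq I-\{i,j,k\}$, hence $|I_S|\le 4$, and inspection of Table~\ref{IsyJs} shows that the only $S$ from Corollary~\ref{somegns} with $|I_S|\le 4$ is $\{\{1,2\}\}$, giving $|S|=1$. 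In case (c), where $J=\ell_{ij}=\{i,j,i\ast j\}$ is a line, the alternative $\ell_{ab}\cap J=\emptyset$ can never hold (two distinct lines of the Fano plane always meet in one point), so the criterion forces $\ell_{ab}=J$ for every $\{a,b\}\in S$, which for non-empty $S$ is equivalent to $I_S=\{i,j,i\ast j\}$; since a generalised nice set contained in $X$ meets each line in at most one edge by Proposition~\ref{gnsX}(ii), every edge of $S$ lies on the single line $J$ and we again obtain $|S|=1$.

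The only genuinely delicate point will be this case analysis of the two alternatives in Theorem~\ref{todo}(b): that generativity of $\{i,j,k\}$ kills the containment alternative in (b), while $J$ being a line kills the disjointness alternative in (c). Everything else is the routine bookkeeping already recorded in Table~\ref{IsyJs} and Proposition~\ref{gnsX}(ii).
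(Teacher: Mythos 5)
Your proposal is correct and follows essentially the same route as the paper's proof: part (a) is read off directly from Theorem~\ref{todo}(a), and parts (b) and (c) from the dichotomy in Theorem~\ref{todo}(b), with generativity of $\{i,j,k\}$ excluding the containment alternative in (b) and $J$ being a line excluding the disjointness alternative in (c), followed by the cardinality bookkeeping from Table~\ref{IsyJs}. The only (harmless) divergence is your final step in (c), where you deduce $|S|=1$ from the fact that a generalised nice subset of $X$ meets each line in at most one edge (Proposition~\ref{gnsX}(ii)), whereas the paper cites the equivalence $|I_S|=3$ if and only if $|S|=1$ from Table~\ref{IsyJs}.
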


\begin{proof}
Theorem~\ref{todo} gives immediately (a).  

Case (b) is clear because $J=\{i,j,k\}$   contains no line, so the condition for $S'_J$ to be generalised  
nice is $\ell_{ab}\cap J=\emptyset$ for any $\{a, b\}\in S$. In other words, $J\cap I_S=\emptyset$. This situation is not possible if $S$ has at least two elements, since $|I-I_S|\le2$.

For item (c), $J=\{i,j,i*j\}=\ell_{ij}$ and we know that   $J=I_S$ is a sufficient condition to ensure 
 that $S'_J$ is generalised nice, due to 
Theorem~\ref{todo}(ii). 
Conversely, assume that $S'_J$ is generalised nice. If $\{a,b\}\in S$, Theorem~\ref{todo}(b) asserts that either $\ell_{ab}\cap \ell_{ij}=\emptyset$ (a contradiction) or $\ell_{ab}=\ell_{ij}=J$. Thus, $\{a,b,a*b\}\subseteq J$ for any $\{a,b\}\in S$; that is, $I_S\subseteq J$. In fact, they are equal because $S\ne\emptyset$. Besides, Table~\ref{IsyJs} says that $|I_S|=3$ if and only if $|S|=1$.
\end{proof}

Now we obtain the  list of related generalised nice sets. We begin by discussing the situation wherein $T-X=\big\{ \{i, i\}, \{j, j\}, \{k, k\}, \{l, l\} \big\}$,
and any three of $\{i,j,k,l\}$ forms a generative triplet.
We can rule out a few cases:
$$
\big\{ \{1, 2\}, \{1, 3\}, \{1, 7\}\big\},
$$
since  $I-J_S= \{1,2,3,7\}$ contains the line $\ell_{23}$;
and
$$
P_{\{1, 2, 3\}} = \big\{\{1, 2\}, \{1, 3\}, \{1, 7\}, \{2, 3\}, \{2, 6\}, \{3, 5\}\big\},
$$
 since $|I-J_S|=3$.  
Keeping  Corollary~\ref{varioscardinal4} in mind,  
we obtain the following new generalised nice sets.

\smallskip

\begin{longtable} [c] { | c | c|  }
    \hline
    \multicolumn{2} { | c | }{{\bf Gns $T$ such that both $T \cap X$ and $T - X$ are non-empty gns}} \\
   \hline
  $T - X$ & $T \cap X$ \\
    \hline
    \endfirsthead
    \hline
    $T - X$ & $T \cap X$ \\
    \hline
    \endhead
    \hline
    \endfoot
    \hline
    \endlastfoot
    $\big\{ \{1, 1\}, \{2, 2\}, \{3, 3\}, \{4, 4\} \big\}$ & $\big\{ \{1, 2\}\big\}$ \\
    $\big\{ \{3, 3\}, \{4, 4\}, \{6, 6\}, \{7, 7\} \big\}$ & \\
    \hline
    $\big\{ \{1, 1\}, \{2, 2\}, \{3, 3\}, \{4, 4\} \big\}$ & $\big\{ \{1, 2\}, \{1, 3\}\big\}$ \\
   \hline 
   $\big\{ \{1, 1\}, \{2, 2\}, \{3, 3\}, \{4, 4\} \big\}$ & $\big\{ \{1, 2\}, \{6, 7\}\big\}$ \\
    $\big\{ \{1, 1\}, \{2, 2\}, \{6, 6\}, \{7, 7\} \big\}$ & \\
    \hline
    $\big\{ \{2, 2\}, \{3, 3\}, \{5, 5\}, \{6, 6\} \big\}$ & $\big\{ \{2, 5\}, \{3, 6\}, \{4, 7\}\big\}$ \\
    \hline
    $\big\{ \{1, 1\}, \{2, 2\}, \{3, 3\}, \{4, 4\} \big\}$ & $\big\{ \{1, 2\}, \{1, 3\}, \{1, 4\}\big\}$ \\
    \hline
    $\big\{ \{1, 1\}, \{2, 2\}, \{6, 6\}, \{7, 7\} \big\}$ & $\big\{ \{1, 2\}, \{1, 6\}, \{2, 6\}\big\}$ \\
    \hline
    $\big\{ \{1, 1\}, \{2, 2\}, \{6, 6\}, \{7, 7\} \big\}$ &   $\big\{ \{1, 2\}, \{1, 6\}, \{6, 7\}\big\}$\\
    \hline
   $\big\{ \{1, 1\}, \{2, 2\}, \{6, 6\}, \{7, 7\} \big\}$ & $\big\{ \{1, 2\}, \{1, 6\}, \{1, 7\}, \{2, 6\}\big\}$ \\ 
   \hline
   $\big\{ \{1, 1\}, \{2, 2\}, \{6, 6\}, \{7, 7\} \big\}$ &  $\big\{ \{1, 2\}, \{1, 6\}, \{2, 7\}, \{6, 7\}\big\}$\\ 
    \hline
   $\big\{ \{1, 1\}, \{2, 2\}, \{6, 6\}, \{7, 7\} \big\}$ & $\big\{ \{1, 2\}, \{1, 6\}, \{1, 7\}, \{2, 6\}, \{2, 7\}\big\}$ \\ 
 \hline
 $\big\{ \{3, 3\}, \{4, 4\}, \{6, 6\}, \{7, 7\} \big\}$ & $X_{\ell^c_{12}}$ \\ 
 \hline
  \caption{\label{gnsLastcase8} $T - X = \big\{ \{k, k\}:k\notin\ell \big\}$ for $\ell$ a line.}
\end{longtable}

According to Corollary~\ref{varioscardinal4}, if $J$ is the complementary of a line, and $S\subseteq X$ is generalised, $S_J$ is generalised if and only if $J\cap J_S=\emptyset$, that is, if $J_S$ is contained in the line $I-J$.
As usual,  the only caution to be taken  is to avoid  the possible collineations.
For $S=\big\{ \{1, 2\}\big\}$, there are 3 lines containing the index $5$, but only two equivalence classes: $S_{I-\ell_{12}}$ and $S_{I-\ell_{34}}\sim_c S_{I-\ell_{67}}$. The case $S=\big\{ \{1, 2\},\{6,7\}\big\}$ is quite similar, since again $J_S=\{5\}$, although now $S_{I-\ell_{12}}\sim_c S_{I-\ell_{67}}$, while  $S_{I-\ell_{34}}$ is not collinear to them. The third and last case such that $J_S$ is contained in more than one line (that is, $|J_S|=1$) is 
$S=\big\{ \{2, 5\}, \{3, 6\}, \{4, 7\}\big\}$, but of course only one equivalence class appears: $S_{I-\ell_{25}}\sim_c S_{I-\ell_{36}}\sim_c S_{I-\ell_{47}}$.
Each of the remaining cases   gives arise to only one new generalised nice set, since $|J_S|\ge 2$ and only one line can contain $J_S$. 
\smallskip

Next, let $T - X$ be of the form $\big\{ \{i, i\}, \{j, j\}, \{i \ast j, i \ast j\}, \{k, k\} \big\},$ for $i, j \in I$ distinct and $k \notin \ell_{ij}$.
No set in Eq.~\eqref{J_Scontieneline} can be combined with such $T - X$
since there is a line contained in $ J_S$, and hence no line is contained in $I-J_S$.
  For the remaining sets $S$'s, we collect the possible generalised nice sets obtained in this way:  
  
  %

\begin{longtable} [c] { | c | c|  }
    \hline
    \multicolumn{2} { | c | }{{\bf Gns $T$ such that both $T \cap X$ and $T - X$ are non-empty gns}} \\
   \hline
  $T - X$ & $T \cap X$ \\
    \hline
    \endfirsthead
    \hline
    $T - X$ & $T \cap X$ \\
    \hline
    \endhead
    \hline
    \endfoot
    \hline
    \endlastfoot
    $\big\{ \{1, 1\}, \{3, 3\}, \{6, 6\}, \{2, 2\} \big\}$ & $\big\{ \{1, 2\}\big\}$ \\
    $\big\{ \{1, 1\}, \{3, 3\}, \{6, 6\}, \{7, 7\} \big\}$ & \\
    \hline
    $\big\{ \{2, 2\}, \{3, 3\}, \{7, 7\}, \{1, 1\} \big\}$ & $\big\{ \{1, 2\}, \{1, 3\}\big\}$ \\
    $\big\{ \{1, 1\}, \{4, 4\}, \{7, 7\}, \{2, 2\} \big\}$ &  \\
    $\big\{ \{2, 2\}, \{3, 3\}, \{7, 7\}, \{4, 4\} \big\}$ &  \\
   \hline 
    $\big\{ \{1, 1\}, \{3, 3\}, \{6, 6\}, \{2, 2\} \big\}$ & $\big\{ \{1, 2\}, \{6, 7\}\big\}$ \\
    $\big\{ \{1, 1\}, \{3, 3\}, \{6, 6\}, \{4, 4\} \big\}$ & \\
    \hline
    $\big\{ \{3, 3\}, \{4, 4\}, \{5, 5\}, \{2, 2\} \big\}$ & $\big\{ \{2, 5\}, \{3, 6\}, \{4, 7\}\big\}$ \\
    \hline 
    $\big\{ \{2, 2\}, \{3, 3\}, \{7, 7\}, \{1, 1\} \big\}$ & $\big\{ \{1, 2\}, \{1, 3\}, \{1, 7\}\big\}$ \\
     \hline
    $\big\{ \{2, 2\}, \{4, 4\}, \{6, 6\}, \{1, 1\} \big\}$ & $\big\{ \{1, 2\}, \{1, 6\}, \{6, 7\}\big\}$ \\
    $\big\{ \{1, 1\}, \{4, 4\}, \{7, 7\}, \{2, 2\} \big\}$ &  \\
    \hline
   $\big\{ \{2, 2\}, \{4, 4\}, \{6, 6\}, \{1, 1\} \big\}$ & $\big\{ \{1, 2\}, \{1, 6\}, \{2, 7\}, \{6, 7\}\big\}$ \\ 
 \hline
  \caption{\label{gnsLastcase9} $T - X = \ell_{ij}\cup \big\{\{k, k\} \big\},$ for   $k \notin \ell_{ij}$.}
\end{longtable}

  We have omitted here most of the discussions on collineations.  As a sample, if 
$S=\big\{ \{1, 2\}, \{1, 6\}, \{6, 7\}\big\}$, then  there are 4 possibilities of $J=\ell_{ij}\cup\{k\}\subseteq I-J_S$, namely:
$\ell_{24}\cup\{1\}$,
$\ell_{24}\cup\{7\}$, $\ell_{14}\cup\{2\}$ and $\ell_{14}\cup\{6\}$. The first and the  fourth  related generalised nice sets $S_J$'s are 
collinear, as well as the second and the third (in both cases we can use $\sigma_{673}$).  
These two equivalence classes are not collinear because any such collineation $\sigma$ would preserve $S$ and in particular $\{1,6\}$, whose indices appear twice each as part of elements in $S$. Thus $\sigma$  cannot send $\{2,4,6,1\}$ to $\{1,4,7,2\}$.
  The discussion changes a little bit depending on the particular $S$. For instance, in the case $S=\big\{ \{1, 2\} \big\}$, there are 4 lines contained in $I-J_S=I-\{5\}$ and there is no problem in fixing any of these, for instance, $\ell_{13}$. To produce sets which are not collinear one must consider for the fourth element $k,$ either the other element of the pair $\{1, 2\} $ not considered in the initial line, or one different from $1$ and $2$. This leads to two possibilities.  
  Although these have been the guidelines for our analysis in the remaining examples too,  we have also checked the work using computer assistance. The algorithm has  been described in detail  in Section~\ref{computer}.
\smallskip

Let us apply Corollary~\ref{varioscardinal4} to  $T - X=\big\{ \{0, 0\}, \{0, i\}, \{0, j\}, \{0, k\} \big\},$ for pairwise distinct $i,j,k\in I$ (whether generative or not),
to finish the cardinality 4 case.

\begin{prop} \label{noolvidardel4}
If $\emptyset\ne S\subseteq X$ and $S'_J$  are generalised nice sets such that $J\subseteq I$ has cardinal 3,
 then $|S|=1$ and $S'_J$ is collinear to one of these sets:
\begin{itemize}
\item[\rm(i)] 
$\big\{ \{0, 0\}, \{0, 3\}, \{0, 4\}, \{0, 7\}, \{1, 2\}\big\}$;
\item[\rm(ii)] 
$ \langle P_{\{0, 1, 2\}}\rangle = \big\{\{0, 0\}, \{0, 1\}, \{0, 2\}, \{0, 5\}, \{1, 2\}\big\}$.
 \end{itemize}
\end{prop}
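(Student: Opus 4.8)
The plan is to dispatch both possible shapes of the three-element set $J\subseteq I$ using Corollary~\ref{varioscardinal4}, and then to reduce each surviving configuration to a single representative by exploiting the transitivity of $S_\ast(I)$ on edges. Since a three-element subset of $I$ is either a line or a generative triplet, I would split the argument along this dichotomy.

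First, suppose $J$ is generative. Then Corollary~\ref{varioscardinal4}(b) applies and says that $S'_J$ is generalised nice precisely when $J\cap I_S=\emptyset$; moreover it already forces $|S|=1$, since Table~\ref{IsyJs} shows that $|I_S|\ge 5$ whenever $|S|\ge 2$ (two lines meet in a point, so a union of two of them has exactly $5$ points), leaving no room for a disjoint three-element $J$ inside the at most two points of $I-I_S$. Writing $S=\{\{a,b\}\}$ we have $I_S=\ell_{ab}$, so $J$ is a three-element subset of the four-point set $I-\ell_{ab}$. Any such $J$ is automatically generative, because a line contained in $I-\ell_{ab}$ would be disjoint from $\ell_{ab}$, contradicting the fact that any two lines of the Fano plane meet. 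Because $S_\ast(I)$ acts transitively on edges I may assume $S=\{\{1,2\}\}$, whence $\ell_{12}=\{1,2,5\}$ and $I-\ell_{12}=\{3,4,6,7\}$; choosing $J$ then amounts to choosing the single omitted point of this four-point set.

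It thus remains to check that the stabiliser of the edge $\{1,2\}$ acts transitively on $\{3,4,6,7\}$. This stabiliser has order $168/\binom{7}{2}=8$, and it acts faithfully on $\{3,4,6,7\}$: a collineation fixing $3,4,6,7$ also fixes $5=3\ast 4$ and then $1=4\ast 7$ and $2=4\ast 6$, so it is the identity. An order-$8$ subgroup of $S_4$ is a Sylow $2$-subgroup, hence a dihedral group, which acts transitively on the four points. Consequently all choices of $J$ produce collinear sets, and the representative $J=\{3,4,7\}$ yields exactly case $(i)$.

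Second, suppose $J$ is a line. Then Corollary~\ref{varioscardinal4}(c) applies with $J=\ell_{ij}$ and forces $J=I_S$, so again $|S|=1$ (by Table~\ref{IsyJs}, $|I_S|=3$ holds only when $|S|=1$). Writing $S=\{\{a,b\}\}$ gives $J=I_S=\ell_{ab}$, and therefore
$S'_J=\{\{a,b\},\{0,0\},\{0,a\},\{0,b\},\{0,a\ast b\}\}=\langle P_{\{0,a,b\}}\rangle$,
using the description of $P_{\{0,a,b\}}$ in Remark~\ref{re_tipos} together with Lemma~\ref{trivialfacts}(ii). Transitivity on edges lets me take $S=\{\{1,2\}\}$, producing $\langle P_{\{0,1,2\}}\rangle$, which is case $(ii)$. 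The only delicate point of the whole argument is the transitivity claim in the generative case; once the edge-stabiliser is identified as a dihedral group of order $8$ acting faithfully on the complementary four points, transitivity is immediate and the uniqueness of class $(i)$ follows, everything else being a direct reading of Corollary~\ref{varioscardinal4} and the cardinalities recorded in Table~\ref{IsyJs}.
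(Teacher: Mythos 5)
Your proof is correct and follows essentially the same route as the paper's: both split on whether the three-element set $J$ is generative or a line, invoke Corollary~\ref{varioscardinal4} to force $|S|=1$ and normalise $S=\big\{\{1,2\}\big\}$ by collineation, and then read off the two representatives (i) and (ii). The only difference is that you supply a full justification (the edge-stabiliser has order $8$, acts faithfully on $\{3,4,6,7\}$, hence is a Sylow $2$-subgroup of $S_4$ and acts transitively) for the claim that the four generative choices of $J$ give collinear sets, a step the paper asserts without proof.
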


\begin{proof}
We can assume  $S=\big\{  \{1, 2\}\big\}$ by Corollary~\ref{varioscardinal4}, and so  $I_S=\{1,2,5\}$. Any choice of 3 indices $i,j,k$ (necessarily generative) in $I-I_S=\{3,4,6,7\}$ would give a generalised nice set, but the four choices would get collinear sets. 

If $k=i*j$, the only possibility is $\ell_{ij}=I_S$.   
\end{proof}

\subsection{$T - X$ of cardinality 5} 

As per in Table \ref{gnsEmptyX} there are three different types of generalised nice sets contained in $X_0 - X$; namely: 
\begin{itemize}
\item[-] $\big\{ \{i, i\}, \{j, j\}, \{i \ast j, i \ast j\}, \{k, k\}, \{l , l\} \big\},$ for $k, l  \in I$ distinct such that $k \ast l= i \ast j$;
%
%

\item[-] $\big\{ \{0, 0\}, \{0, i\}, \{0, j\}, \{0, k\}, \{0,  l\} \big\},$ where $i, j, k, l \in I$ are such that 
any triplet among them is generative;  
\item[-] $\big\{ \{0, 0\}, \{0, i\}, \{0, j\}, \{0, i \ast j\}, \{0, k\} \big\},$ for $i, j, k \in I$ generative.
\end{itemize}

\medskip

Suppose first that $T - X\subseteq X_E^*$ with  $| T-X|=5$.   
 (Note that any 5 distinct indices in $I$ are the union of two lines.) 
Then $T\cap X$  
  cannot be any of the sets in Eq.~\eqref{J_Scontieneline} nor $\big\{ \{1, 2\}, \{1, 3\}, \{1, 7\}\big\};$
since these 7 sets are exactly  those ones with $|J_S|\ge3$.
All the remaining cases give rise to generalised nice sets: 

\smallskip

\begin{longtable} [c] { | c | c|  }
    \hline
    \multicolumn{2} { | c | }{{\bf Gns $T$ such that both $T \cap X$ and $T - X$ are non-empty gns}} \\
   \hline
  $T - X$ & $T \cap X$ \\
    \hline
    \endfirsthead
    \hline
    \endhead
    \hline
    \endfoot
    \hline
    \endlastfoot
    $\big\{ \{1, 1\}, \{3, 3\}, \{6, 6\}, \{2, 2\}, \{4, 4\} \big\}$ & $\big\{ \{1, 2\}\big\}$ \\
    $\big\{ \{3, 3\}, \{6, 6\}, \{1, 1\}, \{4, 4\}, \{7, 7\} \big\}$ & \\
    \hline
    $\big\{ \{1, 1\}, \{4, 4\}, \{7, 7\}, \{2, 2\}, \{3, 3\} \big\}$ & $\big\{ \{1, 2\}, \{1, 3\}\big\}$ \\
   \hline 
    $\big\{ \{1, 1\}, \{3, 3\}, \{6, 6\}, \{2, 2\}, \{4, 4\} \big\}$ & $\big\{ \{1, 2\}, \{6, 7\}\big\}$ \\
    $\big\{ \{6, 6\}, \{1, 1\}, \{3, 3\}, \{2, 2\}, \{7, 7\} \big\}$ &   \\
    \hline
    $\big\{ \{2, 2\}, \{6, 6\}, \{4, 4\}, \{3, 3\}, \{5, 5\} \big\}$ & $\big\{ \{2, 5\}, \{3, 6\}, \{4, 7\}\big\}$ \\
    \hline
    $\big\{ \{2, 2\}, \{6, 6\}, \{4, 4\}, \{1, 1\}, \{7, 7\} \big\}$ & $\big\{ \{1, 2\}, \{1, 6\}, \{6, 7\}\big\}$ \\
    \hline
   $\big\{ \{2, 2\}, \{6, 6\}, \{4, 4\}, \{1, 1\}, \{7, 7\} \big\}$ & $\big\{ \{1, 2\}, \{1, 6\}, \{2, 7\}, \{6, 7\}\big\}$ \\ 
 \hline
\caption{\label{gnsLastcase10}  $T - X\subseteq X_E^*$, $|T - X |=5 $.}
\end{longtable}   
 
 These are generalised nice sets because  none of the 5 indices   belongs to $J_S$ (looking at Table~\ref{IsyJs}).
This time it is easier to discuss possible collineations.
If $|J_S|=2$, there is only one choice and nothing to discuss. So, we have only to study the three cases 
  with $|J_S|=1$. In all of them the generalised nice sets are $S_J$ for $J= I-\{5,k\}$.
  Let us begin with  
$S=\big\{ \{1, 2\}\big\}$. Here  one equivalence class is got from $k\in I_S-J_S=\{1,2\}$, and 
the other from $k\in I-I_S=\{3,4,6,7\}$, clearly giving two not collinear generalised nice sets.  
%
Something very similar happens for $S=\big\{ \{1, 2\}, \{6, 7\}\big\}$:  
two not collinear generalised nice sets arise,
   according to  $k\in I_S-J_S=\{1,2,6,7\}$, or $k\in I-I_S=\{3,4\}$.
In contrast, all $k\in \{2,3,4,5,6,7\}$ play exactly the same role in case $S=\big\{ \{2, 5\}, \{3, 6\}, \{4, 7\}\big\}$, which  only leads to one generalised nice set $S_{I-\{5,k\}}$. 
\smallskip

Let us consider the last two cases together: $T - X=\big\{ \{0, 0\}, \{0, i\}, \{0, j\}, \{0, k\}, \{0, l\} \big\},$ 
where $i, j, k, l\in I$ are distinct.

\begin{prop} \label{noolvidardel5}
If $S\subseteq X$ and $S'_J:=S\cup \big\{  \{0, 0\},\{0, j\}:j\in J\}$  are non-empty generalised nice sets such that $J\subseteq I$ has cardinal 4,
 then $|S|=1$ and $S'_J$ is collinear to one of these sets:
\begin{itemize}
\item[\rm(i)] $\big\{ \{0, 0\}, \{0, 3\}, \{0, 4\}, \{0, 6\}, \{0, 7\}, \{1, 2\}\big\};$
\item[\rm(ii)] $\langle P_{\{0, 1, 2\}} \rangle \cup \big\{\{0, 3\}\big\} = \big\{\{0, 0\}, \{0, 1\}, \{0, 2\}, \{0, 3\}, \{0, 5\}, \{1, 2\} \big\}.$
\end{itemize}
\end{prop}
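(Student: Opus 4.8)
The plan is to read off the admissibility condition from Theorem~\ref{todo}(b) and combine it with the elementary incidence geometry of the Fano plane. Since $S'_J$ is generalised nice with $|J|=4$, Theorem~\ref{todo}(b) tells us that for every edge $\{a,b\}\in S$ we have either $\ell_{ab}\cap J=\emptyset$ or $\ell_{ab}\subseteq J$. I will call a line $\ell$ \emph{admissible} if it satisfies one of these two alternatives. Note that $|I-J|=3$, so $I-J$ is either a line or a generative triplet; I will not actually need to separate these subcases.

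The first step is to show that there is \emph{at most one} admissible line, from which $|S|=1$ will follow. Suppose $\ell,\ell'$ are both admissible. If one is disjoint from $J$ and the other contained in $J$, then $\ell\cap\ell'=\emptyset$, which is impossible because any two lines of the Fano plane meet in a point. If both are disjoint from $J$, then both must fill the three points of $I-J$, so $\ell=\ell'$. If both are contained in $J$, then $|\ell\cup\ell'|\le|J|=4$, whereas two distinct lines span $3+3-1=5$ points; hence again $\ell=\ell'$. Thus a unique admissible line $\ell^\ast$ exists (it exists because $S\neq\emptyset$), and every edge of $S$ lies in $\ell^\ast$, that is, $S\subseteq X_{\ell^\ast}$. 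By Proposition~\ref{gnsX} a generalised nice subset of $X$ contains at most one edge of any line, so $|S|=1$ and $I_S=\ell^\ast$.

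For the classification, apply a collineation so that $S=\big\{\{1,2\}\big\}$, whence $I_S=\ell_{12}=\{1,2,5\}$. Admissibility of $\ell_{12}$ leaves exactly two possibilities. If $\ell_{12}\cap J=\emptyset$, then $J\subseteq\{3,4,6,7\}$ together with $|J|=4$ forces $J=\{3,4,6,7\}$, giving the set in (i). If $\ell_{12}\subseteq J$, then $J=\{1,2,5,k\}$ with $k\in\{3,4,6,7\}$; here I would check that the subgroup of $S_\ast(I_0)$ fixing $1$ and $2$ (hence fixing $5$) has order $4$ and acts on $\{3,4,6,7\}$ as the Klein four-group $\{\id,(3\,4)(6\,7),(3\,6)(4\,7),(3\,7)(4\,6)\}$, which is transitive; for instance the collineation fixing $1,2$ and sending $3\mapsto 4$ also sends $6\mapsto 7$. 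Hence all four values of $k$ yield collinear sets, represented by $k=3$, which is exactly $\langle P_{\{0,1,2\}}\rangle\cup\big\{\{0,3\}\big\}$ in (ii). Finally, (i) and (ii) are genuinely distinct classes: a collineation carrying one to the other fixes $0$ and preserves the unique $X$-edge $\{1,2\}$, hence fixes $5$, yet $\{0,5\}$ belongs to (ii) but not to (i). The only real bookkeeping, and the step most prone to small errors, is the transitivity of this edge-stabiliser on $\{3,4,6,7\}$; everything else in the argument is forced.
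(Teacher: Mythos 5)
Your proof is correct, and while it rests on the same foundation as the paper's (the dichotomy of Theorem~\ref{todo}(b): for each $\{a,b\}\in S$, either $\ell_{ab}\cap J=\emptyset$ or $\ell_{ab}\subseteq J$), it reaches $|S|=1$ by a different key lemma. The paper splits on whether $J$ contains a line: if not, every $\ell_{ab}$ must be disjoint from $J$, so $I_S\subseteq I-J$ and $|I_S|\le 3$; if so, every line meets $J$, forcing $\ell_{ab}\subseteq J$ and $|I_S|\le 4$; in both cases $|S|=1$ is then read off from Table~\ref{IsyJs}. You instead prove a small uniqueness lemma purely from Fano incidence geometry --- when $|J|=4$, at most one line can satisfy either alternative --- which gives $S\subseteq X_{\ell^\ast}$ for a single line $\ell^\ast$, and then conclude $|S|=1$ from Proposition~\ref{gnsX}(ii) (a generalised nice subset of $X$ meets each line in at most one edge), bypassing Table~\ref{IsyJs} entirely; this is arguably more self-contained, since it does not depend on the tabulated values of $I_S$. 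The enumeration of $J$ relative to $\ell_{12}$ then coincides with the paper's, and you in fact supply two details the paper leaves implicit: an explicit transitivity argument for the pointwise stabiliser of $\{1,2\}$ acting on $\{3,4,6,7\}$ (the paper merely asserts that the four choices of $J=\ell_{12}\cup\{i\}$ give collinear sets), and the verification that the representatives (i) and (ii) lie in distinct collineation classes, which the paper states without proof.
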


\begin{proof} Recall from Theorem~\ref{todo} that   for any $\{a,b\}\in S$, we have
either $\ell_{ab}\cap J=\emptyset$, or $\ell_{ab}\subseteq J$.   

 First, assume   that any triplet among the elements in $ J$ is generative, that is, $J$ does not contain a line.
 Then $\ell_{ab}\cap J=\emptyset$ for any $\{a,b\}\in S$ and $I_S\subseteq I-J$. This implies $|I_S|\le| I-J|=3$, and hence $|S|=1$ by Table~\ref{IsyJs}.
 Assuming $S=\big\{\{1, 2\}\big\}$, the set in (i) is of course the only possibility.
 

 
 Second, consider the case where there is a line contained in $ J$. Then every line intersects $J$, and the condition above says $\ell_{ab}\subseteq J$
 for any $\{a,b\}\in S$. In other words, $I_S\subseteq J$ and $|I_S|\le|  J|=4$. Again Table~\ref{IsyJs} tell us that $|S|=1$. If $S=\big\{\{1, 2\}\big\}$, there is $i\in\{3,4,6,7\}$ such that $J=\ell_{12}\cup\{i\}$, but the four possibilities  {lead} to collinear sets.
\end{proof}


\subsection{$T - X$ of cardinality 6} 

Table \ref{gnsEmptyX} tells us that there are two types of generalised nice sets contained in $X_0 - X$, that are as follows:
\begin{itemize}
\item[-] $\big\{ \{i, i\}: i\in J \big\},$ for $J\subseteq I$, $|J|=6$;
\item[-] $\big\{ \{0, 0\}, \{0, i\}: i\in J \big\},$ for $J\subseteq I$, $|J|=5$.

\end{itemize}

Assume first that $T - X$ is as in the first case. Then one can prove the following:

\begin{cor} 
Let $S$ be a generalised nice set. For any $k\in I$,    $S_{I-\{k\}}$ is   generalised 
nice if and only if   $S\subseteq X^{(k)}$.   
\end{cor}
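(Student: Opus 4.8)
The plan is to derive the statement as a direct specialisation of Theorem~\ref{todo}(a) to the set $J = I - \{k\}$. Recall that, in the notation of that theorem, $S_{I-\{k\}} = S \cup \big\{\{j,j\} : j \in I-\{k\}\big\}$, so $S_{I-\{k\}}$ is exactly the set $S_J$ with $J = I-\{k\}$. The first step is therefore to invoke Theorem~\ref{todo}(a) verbatim: it asserts that $S_{I-\{k\}}$ is a generalised nice set if and only if $S \cap X^{(j)} = \emptyset$ for every $j \in I-\{k\}$, equivalently $(I-\{k\}) \cap J_S = \emptyset$, where $J_S = \{a*b : \{a,b\} \in S\}$.

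The second step is an elementary set-theoretic translation. Since every pair $\{a,b\} \in S \subseteq X$ has distinct entries in $I$, its product $a*b$ lies in $I -\{a,b\} \subseteq I$, whence $J_S \subseteq I$ and $(I-\{k\}) \cap J_S = J_S - \{k\}$. This is empty precisely when $J_S \subseteq \{k\}$, that is, when $a*b = k$ for every $\{a,b\} \in S$. By the definition $X^{(k)} = \big\{\{a,b\} \in X : a*b = k\big\}$, this last condition reads exactly $S \subseteq X^{(k)}$, which yields the desired equivalence.

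I would also record the degenerate case $S = \emptyset$: there $J_S = \emptyset \subseteq \{k\}$ and $\emptyset \subseteq X^{(k)}$ both hold, so the equivalence is vacuously true, consistent with $S_{I-\{k\}}$ being contained in $X_E - \big\{\{0,0\}\big\}$ and hence generalised nice by Proposition~\ref{way}(c).

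There is no serious obstacle: the argument is a routine unwinding of the definitions of $X^{(k)}$ and $J_S$ once Theorem~\ref{todo}(a) is in hand. The only point demanding a modicum of care is the passage from $(I-\{k\}) \cap J_S = \emptyset$ to $J_S \subseteq \{k\}$, which rests on $J_S$ being a subset of $I$ (never containing $0$); this is immediate because $a \ne b$ forces $a*b \in I$, so no spurious index $0$ can arise in $J_S$.
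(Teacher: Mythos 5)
Your proof is correct and takes essentially the same route as the paper: both are direct specialisations of Theorem~\ref{todo}(a), rephrasing the condition $(I-\{k\})\cap J_S=\emptyset$ as $J_S\subseteq\{k\}$, i.e. $S\subseteq X^{(k)}$. Your explicit treatment of the degenerate case $S=\emptyset$ (where the paper's identification of $S\subseteq X^{(k)}$ with $J_S=\{k\}$ tacitly assumes $S\neq\emptyset$) is a minor refinement, not a different method.
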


\begin{proof}
Note that saying that $S\subseteq X^{(k)}$ is the same as saying $J_S=\{k\}$. Theorem~\ref{todo} implies that $S_J$ is generalised if, and only if,
$J$ does not intersect $J_S$. Since $|J|=6$, this means $J=I-\{k\}.$
%
%
%
\end{proof}

In this way we obtain just three new generalised nice sets up to collineations (the two first sets contained in  $X^{(5)}$, to agree with the choices in Corollary~\ref{somegns}):
\smallskip 

\begin{longtable} [c] { | c | c|  }
    \hline
    \multicolumn{2} { | c | }{{\bf Gns $T$ such that both $T \cap X$ and $T - X$ are non-empty gns}} \\
   \hline
  $T - X$ & $T \cap X$ \\
    \hline
    \endfirsthead
    \hline
    $T - X$ & $T \cap X$ \\
    \hline
    \endhead
    \hline
    \endfoot
    \hline
    \endlastfoot
    $\big\{ \{1, 1\}, \{3, 3\}, \{6, 6\}, \{2, 2\}, \{4, 4\}, \{7, 7\} \big\}$ & $\big\{ \{1, 2\}\big\}$ \\
    \hline
    $\big\{ \{1, 1\}, \{3, 3\}, \{6, 6\}, \{2, 2\}, \{4, 4\}, \{7, 7\} \big\}$ & $\big\{ \{1, 2\}, \{6, 7\}\big\}$ \\
    \hline
     $\big\{ \{5, 5\}, \{3, 3\}, \{6, 6\}, \{2, 2\}, \{4, 4\}, \{7, 7\} \big\}$ & $X^{(1)}=\big\{ \{2, 5\}, \{3, 6\}, \{4, 7\}\big\}$ \\
 \hline
\caption{\label{gnsLastcase11}  $T - X\subseteq X_E^*$, $|T - X|=6$.
}
\end{longtable}

\smallskip

To finish, we consider the only alternative form of $T - X$ with cardinality 6: 

\begin{prop}\label{noolvidardel6}
If $S\subseteq X$ and $S'_J:=S\cup \big\{  \{0, 0\},\{0, j\}:j\in J\}$  are non-empty generalised nice sets such that $J\subseteq I$ has cardinal 5,
 then $|S|\le2$ and $S'_J$ is collinear to one of these sets:
\begin{itemize}
\item[\rm(i)] 
$\langle P_{\{0, 1, 2\}}\rangle \cup \big\{\{0, 6\}, \{0, 7\} \big\}  = \big\{\{0, 0\}, \{0, 1\}, \{0, 2\}, \{0, 5\}, \{0, 6\}, \{0, 7\}, \{1, 2\}\big\};$
\item[\rm(ii)]  
$\langle P_{\{0, 1, 2\}}\rangle \cup \big\{\{0, 6\}, \{0, 3\} \big\}  =  \big\{\{0, 0\}, \{0, 1\}, \{0, 2\}, \{0, 5\}, \{0, 3\}, \{0, 6\}, \{1, 2\}\big\}; $
\item[\rm(iii)] 
$\langle P_{\{0, 1, 2\}}\rangle \cup \langle P_{\{0, 6, 3\}}\rangle  =  \big\{\{0, 0\}, \{0, 1\}, \{0, 2\}, \{0, 5\}, \{0, 3\},\{0, 6\}, \{1, 2\}, \{1, 3\} \big\};$
\item[\rm(iv)] 
$\langle P_{\{0, 1, 2\}}\rangle \cup \langle P_{\{0, 6, 7\}}\rangle  =  \big\{\{0, 0\}, \{0, 1\}, \{0, 2\}, \{0, 5\}, \{0, 6\},\{0, 7\}, \{1, 2\}, \{6, 7\} \big\}.$
\end{itemize}
\end{prop}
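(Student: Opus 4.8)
The plan is to reduce everything to the characterisation in Theorem~\ref{todo}(b) and then perform an orbit count under the relevant stabiliser. First I would record the elementary cardinality observation: since $|J|=5$ we have $|I-J|=2$, and a two-element set contains no line, so \emph{every} line of the Fano plane meets $J$. Consequently, for any $\{a,b\}\in S$ the alternative ``$\ell_{ab}\cap J=\emptyset$'' offered by Theorem~\ref{todo}(b) cannot occur, and the only way for $S'_J$ to be generalised nice is that $\ell_{ab}\subseteq J$ for every $\{a,b\}\in S$; equivalently $I_S\subseteq J$, so $|I_S|\le 5$. Since $I_S$ and $|S|$ are preserved by collineations, and since applying a collineation $\sigma$ to $S'_J$ yields $(\widetilde{\sigma}S)'_{\sigma(J)}$, I may normalise $S$ to be one of the representatives of Corollary~\ref{somegns}. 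Scanning the middle column of Table~\ref{IsyJs}, the only ones with $|I_S|\le 5$ are $\big\{\{1,2\}\big\}$ (with $I_S=\{1,2,5\}$), $\big\{\{1,2\},\{1,3\}\big\}$ and $\big\{\{1,2\},\{6,7\}\big\}$ (both with $|I_S|=5$). In particular $|S|\le 2$, which is the first assertion.

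Next I would treat the three admissible shapes of $S$ separately, using that $J$ must contain $I_S$ and have exactly five elements. For $S=\big\{\{1,2\},\{1,3\}\big\}$ and $S=\big\{\{1,2\},\{6,7\}\big\}$ the set $I_S$ already has five elements, so $J=I_S$ is forced; computing $S'_{I_S}=S\cup\big\{\{0,0\}\big\}\cup\big\{\{0,j\}:j\in I_S\big\}$ directly (with $P_{\{0,i,j\}}=\big\{\{0,i\},\{0,j\},\{0,i\ast j\},\{i,j\}\big\}$ from Remark~\ref{re_tipos} and $\{0,0\}$ adjoined) yields exactly the sets displayed in (iii) and (iv) respectively. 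The only genuinely combinatorial case is $S=\big\{\{1,2\}\big\}$, where $I_S=\{1,2,5\}$ and $J=\{1,2,5\}\cup\{p,q\}$ for some pair $\{p,q\}\subseteq\{3,4,6,7\}$; here I must count the orbits of such pairs under the stabiliser of $S$.

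For that count I would exploit that any collineation fixing the edge $\{1,2\}$ setwise also fixes $5=1\ast 2$, permutes $\{3,4,6,7\}$, and preserves $\ast$. Hence the condition ``$p\ast q=5$ versus $p\ast q\in\{1,2\}$'' is invariant along stabiliser orbits, and it partitions the six pairs into $\big\{\{3,4\},\{6,7\}\big\}$ (where $p\ast q=5$) and $\big\{\{3,6\},\{3,7\},\{4,6\},\{4,7\}\big\}$ (where $p\ast q\in\{1,2\}$). I would then exhibit explicit collineations showing each block is a single orbit: the collineation $(3\,6)(4\,7)$, which fixes $1,2,5$, merges $\{3,4\}$ with $\{6,7\}$; while $(3\,4)(6\,7)$ (fixing $1,2,5$) and $(1\,2)(6\,7)$ (fixing the edge $\{1,2\}$, acting as $(6\,7)$ on $\{3,4,6,7\}$) together move $\{3,6\}$ across all four pairs of the second block. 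Taking $\{p,q\}=\{6,7\}$ and $\{p,q\}=\{3,6\}$ then gives the representatives (i) and (ii). Finally, to confirm the four classes are genuinely distinct I would note that (i),(ii) have $|S|=1$ while (iii),(iv) have $|S|=2$; that the $\ast$-invariant above separates (i) from (ii); and that the two edges of $S$ share a point in (iii) but are disjoint in (iv), a further collineation invariant.

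The step I expect to be the main obstacle is the orbit count in the case $S=\big\{\{1,2\}\big\}$: the two $|I_S|=5$ cases are forced and the rest is bookkeeping, whereas here one must correctly describe the stabiliser's action on $\{3,4,6,7\}$ and verify that precisely two orbits of pairs arise. The $\ast$-preserving invariant is what keeps this clean, sparing one from computing the full order-eight stabiliser explicitly.
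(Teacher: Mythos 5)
Your proposal is correct and follows essentially the same route as the paper: both reduce via Theorem~\ref{todo}(b) to the condition $I_S\subseteq J$ (hence $|I_S|\le 5$ and, by Table~\ref{IsyJs}, $|S|\le 2$), treat the two $|S|=2$ cases as forced ($J=I_S$, giving (iii) and (iv)), and split the $S=\big\{\{1,2\}\big\}$ case according to whether the second line meets $\ell_{12}$ in $\{5\}$ or in a point of $\{1,2\}$. The only difference is one of detail: where the paper declares the orbit structure and the non-collinearity of the resulting classes ``evident,'' you supply the explicit stabiliser collineations and the $\ast$-invariants, which is a tightening of the same argument rather than a new approach.
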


\begin{proof}
Recall from Theorem~\ref{todo} that, as $|J|\ge5$, 
 we have a necessary and sufficient condition on $J$ to guarantee $S'_J$ generalised nice: $I_S\subseteq J$. 
This implies $|I_S|\le5$,  so that $|S|\le2$ according to Table~\ref{IsyJs}.

In case $|S|=2$, as $|I_S|=5$ then necessarily $I_S=J$ and cases (iii) and (iv) clearly arise as the unique possibilities.
 In case $|S|=1$, we have to study possible $J$'s containing $I_S$  with 5 elements. 
 If we fix $S=\big\{\{1,2\}\big\}$, then
 $J=\ell_{12}\cup\ell_{ab}$. We have to consider two situations: either $\ell_{12}\cap\ell_{ab}$ equals one of the two indices appearing in $S$, or
 $\ell_{12}\cap\ell_{ab}=\{5\}$.  Evidently, these possibilities are not collinear.
\end{proof}

\subsection{$T - X$ of cardinality 7} 

Looking at Table \ref{gnsEmptyX} we get the following generalised nice sets contained in $X_0 - X$:
\begin{itemize}
\item[-] $X_E^*=\big\{ \{1, 1\}, \{2, 2\}, \{3, 3\}, \{4, 4\}, \{5, 5\}, \{6, 6\}, \{7, 7\}\big\};$
\item[-] $\big\{ \{0, 0\}, \{0, i\}, i\in J \big\},$ for $J\subseteq I$, $|J|=6$.   
\end{itemize}

In the first case, $S\cup X_E^*=S_I$ is never a   generalised nice set, by Theorem~\ref{todo}(a) (of course $I$ intersects $J_S$).
We consider the only alternative for $T - X$ with cardinality 7, to get, from Theorem~\ref{todo}(ii),
\begin{cor}
Let  $S\subseteq X$ be a  non-empty generalised nice set, and $k\in I$. Then $S'_{I-\{k\}}$ is a generalised nice  set if and only if $k\notin I_S$.
\end{cor}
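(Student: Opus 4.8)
The plan is to deduce this directly from the equivalence built into Theorem~\ref{todo}(b)(ii). The set $J := I-\{k\}$ has cardinality $|I|-1 = 6$, which in particular satisfies $|J|\ge 5$. This is precisely the regime in which Theorem~\ref{todo}(b)(ii) upgrades the implication ``$I_S\subseteq J \Rightarrow S'_J$ generalised nice'' to a genuine equivalence. So I would first record that $S'_J$ is generalised nice \emph{if and only if} $I_S\subseteq J$, invoking the ``$|J|\ge 5$'' clause of that part.

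The second and final step is a trivial set-theoretic translation: since $J = I-\{k\}$ and $I_S\subseteq I$ by definition, the containment $I_S\subseteq I-\{k\}$ holds exactly when $k\notin I_S$. Combining with the equivalence from the previous step yields the claim.

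I expect there to be essentially no obstacle here; the whole content has already been absorbed into Theorem~\ref{todo}. The only point worth double-checking is that we are genuinely entitled to the converse (necessity) direction, and that is exactly what the hypothesis $|J|\ge 5$ guarantees in Theorem~\ref{todo}(b)(ii): if $|I-J|\le 1$ (here $I-J=\{k\}$ has a single element), then $I-J$ contains no line, so $J$ must meet every line, forcing $\ell_{ab}\subseteq J$ for each $\{a,b\}\in S$ and hence $I_S\subseteq J$. Thus the proof reduces to the two short steps above, and I would simply write: ``As $|I-\{k\}|=6\ge5$, Theorem~\ref{todo}(b)(ii) gives that $S'_{I-\{k\}}$ is generalised nice if and only if $I_S\subseteq I-\{k\}$, that is, if and only if $k\notin I_S$.''
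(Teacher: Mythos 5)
Your proposal is correct and matches the paper's own approach exactly: the paper states this corollary as an immediate consequence of Theorem~\ref{todo}(b)(ii), applied with $J=I-\{k\}$ of cardinality $6\ge 5$, which is precisely your two-step argument. Your extra check of why the necessity direction holds (that $I-J=\{k\}$ contains no line, so every $\ell_{ab}$ must lie inside $J$) faithfully reproduces the reasoning already given in the proof of Theorem~\ref{todo}(b)(ii).
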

In particular,    $S$   cannot be any of the sets such that $I_S=I$, that is, those in Eq.~\eqref{losdeI_S=I}. 
Any other $S$ can be combined with $T - X=\big\{ \{0, 0\}, \{0, i\}: i\ne k \big\},$ for any choice of $k\notin I_S.$  
We obtain

\begin{longtable} [c] { | c | c|  }
    \hline
    \multicolumn{2} { | c | }{{\bf Gns $T$ such that both $T \cap X$ and $T - X$ are non-empty gns}} \\
   \hline
  $T - X$ & $T \cap X$ \\
    \hline
    \endfirsthead
    \hline
    \endhead
    \hline
    \endfoot
    \hline
    \endlastfoot
    $\big\{ \{0, 0\}, \{0, 1\}, \{0, 2\}, \{0, 5\}, \{0, 3\}, \{0, 4\}, \{0, 6\}\big\}$ & $\big\{ \{1, 2\}\big\}$ \\
    \hline
    $\big\{ \{0, 0\}, \{0, 1\}, \{0, 2\}, \{0, 5\}, \{0, 3\}, \{0, 4\}, \{0, 6\}\big\}$ & $\big\{ \{1, 2\}, \{1, 3\}\big\}$ \\
    $\big\{ \{0, 0\}, \{0, 1\}, \{0, 2\}, \{0, 5\}, \{0, 3\}, \{0, 7\}, \{0, 6\}\big\}$ &  \\
   \hline 
   $\big\{ \{0, 0\}, \{0, 1\}, \{0, 2\}, \{0, 5\}, \{0, 3\},  \{0, 6\},   \{0, 7\}\big\}$ & $\big\{ \{1, 2\}, \{6, 7\}\big\}$ \\
    \hline
    $\big\{ \{0, 0\}, \{0, 1\}, \{0, 2\}, \{0, 5\}, \{0, 3\}, \{0, 4\}, \{0, 6\}\big\}$ & $\big\{ \{1, 2\}, \{1, 6\}, \{2, 6\}\big\}$ \\
    \hline
    $\big\{ \{0, 0\}, \{0, 1\}, \{0, 2\}, \{0, 5\}, \{0, 3\}, \{0, 6\},   \{0, 7\}\big\}$ & $\big\{ \{1, 2\}, \{1, 6\}, \{6, 7\}\big\}$ \\
    \hline
   $\big\{ \{0, 0\}, \{0, 1\}, \{0, 2\}, \{0, 5\}, \{0, 3\}, \{0, 6\}, \{0, 7\}\big\}$ & $\big\{ \{1, 2\}, \{1, 6\}, \{2, 7\}, \{6, 7\}\big\}$ \\
    \hline
  \caption{\label{gnsLastcase12} $T - X \subseteq X_F$, $|T-X|=6$}
\end{longtable}

We only need to be concerned about those sets with $|I_S|<6$, corresponding with $|S|\le 2$, where there are several choices for $k$ and we have to decide if they produce collinear sets when combining $S$ and $T-X$. For
$\big\{ \{1, 2\}, \{6, 7\}\big\}$, $k$ could be either $3$ or $4$, but they give  collinear sets (for instance, $\sigma_{124}$ is a suitable collineation). 
On the contrary, 
if $S=\big\{ \{1, 2\}, \{1, 3\}\big\}$, $k$ could be either $4$ or $7$, but the related generalised nice sets are not collinear: a such collineation would have to preserve $\{1\}$ and $\{2,3\}$, hence $\{7\}$.


\subsection{$T - X$ of cardinality 8}

There is only one generalised nice set contained in $X_0 - X$ of cardinality 8, namely, 
$
X_F=\big\{\{0, 0\}, \{0, i\} :i\in I\big\}.
$ From Theorem~\ref{todo}(a), it follows 

\begin{cor}\label{gnsLastcase13}
For any generalised nice set $S$ given in Corollary~\ref{somegns}, $S\cup X_F$ is   a  generalised nice set too.
\end{cor}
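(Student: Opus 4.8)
The plan is to recognise $S\cup X_F$ as a member of the family $S'_J$ studied in Theorem~\ref{todo}(b), and to exploit the fact that the criterion found there becomes vacuous when $J$ is as large as possible. Concretely, I would first rewrite $X_F=\big\{\{0,0\}\big\}\cup\big\{\{0,j\}:j\in I\big\}$, so that for any generalised nice set $S\subseteq X$ one has $S\cup X_F=S'_{I}$ in the notation $S'_J:=S\cup\big\{\{0,0\},\{0,j\}:j\in J\big\}$ of Theorem~\ref{todo}(b), with $J=I$.

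Next I would invoke the characterisation in Theorem~\ref{todo}(b): $S'_J$ is a generalised nice set if and only if for every $\{a,b\}\in S$ either $\ell_{ab}\cap J=\emptyset$ or $\ell_{ab}\subseteq J$. With $J=I$ this holds automatically, because every line $\ell_{ab}$ is a subset of $I$, so the second alternative $\ell_{ab}\subseteq I$ is always met. Equivalently, one may quote the sufficient condition of part (b)(ii), namely $I_S\subseteq J$, which with $J=I$ reduces to $I_S\subseteq I$ and is trivially true for any $S\subseteq X$. Either way $S'_{I}=S\cup X_F$ is generalised nice; since the argument uses only that $S$ is generalised nice and contained in $X$, it applies to each set listed in Corollary~\ref{somegns}.

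There is, honestly, no genuine obstacle here: the whole content is the remark that $J=I$ trivialises the combinatorial condition of Theorem~\ref{todo}(b). The single point deserving care is that the sufficient implication there holds for all $J$ (so no lower bound on $|J|$ is needed), which is exactly what part (b) and the sufficiency half of part (b)(ii) provide. As a sanity check one could instead verify the absorbing property of Definition~\ref{defGNS} directly: any instance $\{i,j\},\{i*j,k\}\in S\cup X_F$ either has both pairs in $X$, where it is absorbed because $S$ is generalised nice, or involves a pair $\{0,m\}\in X_F$, and then $P_{\{i,j,k\}}$ (see Remark~\ref{re_tipos}) consists of pairs meeting $0$, all lying in $X_F$, together with at most one pair of $X$ which is then forced to belong to $S$. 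This direct route is more tedious, and the reduction to Theorem~\ref{todo} makes it superfluous.
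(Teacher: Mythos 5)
Your proof is correct and matches the paper's own argument: the paper likewise deduces this corollary directly from Theorem~\ref{todo} by taking $J=I$, so that the line condition (equivalently the sufficient condition $I_S\subseteq J$ of part (b)(ii)) holds vacuously. The only discrepancy is that the paper's text cites Theorem~\ref{todo}(a), which is evidently a typo for part (b) --- the part you correctly invoke, since $X_F$ consists of pairs $\{0,i\}$ rather than pairs $\{i,i\}$.
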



\section{Computer Assistance} \label{computer}

In this section, we describe some algorithms that have enabled us to verify the above classification in an additional alternative way. 
The first returns a set consisting of all the collineations which act as equivalences 
between two generalised nice sets. 
The second checks whether a given set is generalised nice. 
    
\subsection{Collineation Search}
  
Here, we describe an algorithm which, given two generalised nice sets $S$ and $T,$ identifies all collineations $\sigma \in S_\ast(I)$ which act as $\tilde \sigma(S)=T$ or $\tilde 
  \sigma (T)= S.$ We begin by introducing some definitions.
  
\smallskip
  
To ease the notation, in what follows, we will refer to the elements of $X_0$ as pairs.
  
    \begin{define} Fix $i\in I_0,$ $S$ a generalised nice set, and $p, q$ pairs in $S.$ 
    \begin{itemize}
\item[\rm (i)] $\vert \setbar{ \{a, b\}\in S}{a\ast b=i}\vert$ is called the \emph{height} of $i$ in $S.$ 
\item[\rm (ii)] The number of occurrences of $i$ in the pairs in $S$ (that is, up to twice per pair) is called the \emph{weight} of $i$ in $S.$
\item[\rm (iii)] For $0 \leq n \leq 9$, the subset $\freq(n, S)$ of $I$ consisting of all the elements of weight $n$ in $S$ is called the \emph{frequency set} of weight $n$ in $S.$ 
\item[\rm (iii)]We say that $\setbar{a \ast b}{\{a,b\}\in S}$ is the set of \emph{points} in $S.$
    \end{itemize}
We define the actions of a collineation $\sigma$ on: 
    \begin{itemize}
    \item[\rm (iv)] $I$ given by $\bar \sigma(J)= \setbar{\sigma(j)}{j\in J}$ for all $J\subseteq I,$ and
    \item[\rm (v)] $X_0$ given by $\tilde \sigma(S) = \left\{\{\sigma(s_1), \sigma(s_2)\} \vert \{s_1, s_2\}\in S\right\}.$
    \end{itemize}
    \end{define}

This following result will be very useful when searching for collineations between generalised nice sets. The proof is straightforward and we omit it here.
    
\begin{lemma} \label{eFreq}
The actions $\ \bar{} \ $ and $\ \tilde{} \ $ preserve frequency sets: $\bar\sigma\left(\freq(n,S)\right)= 
    \freq(n, \tilde \sigma(S)),$ for all generalised nice sets $S$ and $\sigma \in S_\ast(I)$. Moreover, if $S$ and $T$ are collinear generalised nice sets, then $|\freq(n,S)| = |\freq(n,T)|$ for all $0 \leq n \leq 9$.  
\end{lemma}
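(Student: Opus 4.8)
The plan is to reduce the entire statement to one elementary observation: a collineation $\sigma$ is a bijection of $I_0$ fixing $0$, so the induced map $\tilde\sigma$ on pairs preserves, index by index, the number of times each element occurs. First I would record that $\tilde\sigma$ is a bijection of $X_0$. Since $\sigma\colon I_0\to I_0$ is a bijection with $\sigma(0)=0$, the assignment $\{a,b\}\mapsto\{\sigma(a),\sigma(b)\}$ is well defined on unordered pairs and is inverted by the map induced by $\sigma^{-1}$; restricting to $S$ gives a bijection from the pairs of $S$ onto the pairs of $\tilde\sigma(S)$.

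The central step is to prove that weight is invariant: for every $i\in I$, the weight of $i$ in $S$ equals the weight of $\sigma(i)$ in $\tilde\sigma(S)$. The key here is injectivity of $\sigma$. In a single pair $\{a,b\}$ the index $i$ occurs exactly as many times (namely $0$, $1$, or $2$, the value $2$ arising precisely when $a=b=i$) as $\sigma(i)$ occurs in the image pair $\{\sigma(a),\sigma(b)\}$, because $\sigma$ identifies $a$ or $b$ with $i$ if and only if it identifies $\sigma(a)$ or $\sigma(b)$ with $\sigma(i)$. Summing these local contributions over the pairs of $S$, and matching them against the pairs of $\tilde\sigma(S)$ via the bijection of the previous step, yields the desired equality of weights.

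From weight invariance the frequency-set identity is immediate: $i\in\freq(n,S)$ if and only if $\sigma(i)\in\freq(n,\tilde\sigma(S))$, so $\sigma$ carries $\freq(n,S)$ bijectively onto $\freq(n,\tilde\sigma(S))$, which is exactly $\bar\sigma(\freq(n,S))=\freq(n,\tilde\sigma(S))$. For the final assertion, if $S\sim_c T$ I would choose $\sigma\in S_\ast(I)$ with $\tilde\sigma(S)=T$; then $\bar\sigma(\freq(n,S))=\freq(n,T)$, and since $\bar\sigma$ is the restriction of a bijection it preserves cardinalities, giving $|\freq(n,S)|=|\freq(n,T)|$ for all $0\leq n\leq 9$. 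The argument is entirely routine; the only point needing care is the bookkeeping of the diagonal pairs $\{i,i\}$ in the weight count, but this is handled uniformly by the injectivity observation, so I anticipate no genuine obstacle.
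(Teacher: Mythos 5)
Your proof is correct. The paper itself gives no argument for this lemma (it states ``The proof is straightforward and we omit it here''), and your writeup — bijectivity of $\tilde\sigma$ on $X_0$ via the inverse induced by $\sigma^{-1}$, pairwise weight invariance from injectivity of $\sigma$ (including the diagonal pairs $\{i,i\}$), and then the frequency-set identity and cardinality conclusion — is exactly the routine verification the authors evidently had in mind.
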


Lemma~\ref{eFreq} reduces our search to collineations $\sigma$ which obey $\tilde \sigma \left(\freq(n,S)\right)= \freq(n, T)$ for all $0 \leq n \leq 9.$ 
Moreover, each pair of frequency sets $\freq(n,S)$ and $\freq(n,T)$ provides a necessary condition on $\sigma$ which depends on the cardinality of the given frequency set. More precisely, for $\freq(n,S) =\{s_1, \ldots, s_c\}$ and $\freq(n,T) =\{t_1, \ldots, t_c\}$ we illustrate this process: 

    \begin{enumerate}
    		\item[\tiny $(c=1):$] In the singleton case we must have $\sigma(s_1)=t_1.$
		\item[\tiny $(c=2):$]  In this case, we find $\sigma(s_1 \ast s_2)=t_1\ast t_2.$
		\item[\tiny $(c=3):$] We have two possibilities:
		\begin{itemize}
		\item[-] If $s_1, s_2, s_3$ are generative then $\sigma(s_1\ast s_2 \ast s_3)= t_1 \ast t_2 \ast t_3.$
		\item[-]Otherwise, 
		$s_1, s_2, s_3$ form a line $L_S$ which $\sigma$ must send to $L_T,$ the analogous line for $T.$ That is, $\sigma(L_S)=L_T.$
		\end{itemize}
		\item[\tiny $(c=4):$] Instead of considering $\freq(n,S)$ and $\freq(n,T)$ we consider $I - \freq(n,S)$ and $I-\freq(n,T).$ This leaves us in the case $c=3.$
		\item[\tiny $(c=5):$] Again, we consider $I-\freq(n,S)$ and $I-\freq(n,T)$ and we are in the case $c=2.$
		\item[\tiny $(c=6):$] We consider $I-\freq(n,S)$ and $I-\freq(n,T),$ each of which are singletons. We are in the case $c=1.$
    \end{enumerate} 
Next, we consider the set consisting of all collineations adhering to all the conditions above.
If there exist any collineations mapping $S$ onto $T,$ they must belong to this set.

\begin{remark} \label{complementEx}
	Take collinear generalised nice sets $S$ and $T$. Then we have a disjoint union $I= \bigcup_{0\leq n \leq 9} \freq(n, S).$ Furthermore, if this disjoint union consists of two 
	non-empty frequency sets (necessarily of different weights) of $S,$ then these two frequency sets each offer us the same information about collineations.
	
	For example, let us suppose that $S=\{\{1,1\}, \{2,2\}\}$ and $T=\{\{2,2\}, \{3,3\}\}.$ Then $I= \freq(0,S)\cup \freq(2, S)=  \{3,4,5,6,7\} \cup \{1,2\}$ and 
	$I=\freq(0, T)\cup \freq(2, T)=\{1,4,5,6,7\}\cup \{2,3\}.$ Both of these respective pairs of sets offer us the same necessary condition; namely, $\sigma(1\ast 2)= 2\ast 3,$ for any 
	$\sigma\in S_\ast(I)$ acting via $\tilde{} \ $ (as an equivalence between $S$ and $T$). 

\smallskip
	
A similar situation occurs whenever there is a disjoint union $I=\freq(n_1, S)\cup \freq(n_2, S),$ 
	with respective cardinalities $c_1$ and $c_2= 7-c_1,$ such that $1\leq c_1 \leq 6.$
\end{remark}

\subsection{Variables used}

     Here we describe the variables used the algorithms. The variable {\tt sigma} will be used to track the information we have found regarding the necessary 
     conditions for collineations between $S$ and $T.$ In fact, {\tt sigma} is a list with seven elements. For $1\leq i \leq 7,$ if the value of {\tt sigma[i]} is 
\begin{itemize}
\item[-] non-zero, then this indicates that $\sigma(i)$ is equal to the value of {\tt sigma[i]};
\item[-] 0, then this indicates that we are not yet sure of the value of $\sigma(i).$
\end{itemize}     

 The five variables {\tt used2, used31, used32, used4,} and {\tt used5} are used to track whether a complementary subset (as per in Remark \ref{complementEx}) exists amongst the frequency sets of $S.$ Note that there are two types of frequency sets of cardinality 3: lines or generative triplets. These types correspond to {\tt used31} and {\tt used32}, respectively.

We find  it convenient to implement a generalised nice set $S$ as a list of ordered pairs. To be more precise, these lists take the form
\begin{equation}
S=[[s_{11}, s_{12}], \ldots, [s_{m1}, s_{m2}]],
\end{equation}
where each $s_{ij} \in I_0.$ Furthermore, we will assume the following properties:
\begin{itemize} \label{sorted_properties}
	\item[-] $s_{i1}\leq s_{i2}$ for all $1\leq i \leq 7,$
	\item[-] $s_{i1} \leq s_{(i+1) 1}$ for all $1\leq i \leq 6,$	
	\item[-] $s_{i2} \leq s_{(i+1) 2}$ for all $1\leq i \leq 6.$ 
\end{itemize}
In practice, 'sorting' generalised nice sets (realised as lists of ordered pairs) is straightforward. This achieves the properties described above.

\subsection{Pseudo Code}

In this short section, we provide pseudo code for the algorithms used. We have used the mathematics software \emph{Maple} in our implementation of each of the following algorithms.

\subsubsection{Finding collineations} 

The following algorithm describes a method to construct the set of all collineations between two given generalised nice sets.

\smallskip

    \begin{tcolorbox}[sharp corners, boxrule=0.5pt, breakable]
        \hfill \\    
    \textit{Input:} Two generalised nice sets $S$ and $T$ with $|S|=m$ and $|T|=n.$ \\
    \textit{Output:} The set $C$ of all collinear permutations under whose action $S$ and $T$ are collinear.

%
%
    
		\begin{enumerate}[label=\arabic*.]
			\item 
				\begin{enumerate}
					\item $C\leftarrow \emptyset.$
					
					\item \textbf{If} $|S| \neq |T|$ \textbf{or} $| \setbar{i\in I}{\exists \{i,j\} \in S}| \neq  | \setbar{i\in I_0}{\exists \{i,j\} \in T}|$ \textbf{or} $|\text{freq}(0,S)| \neq |\text{freq}
					(0,T)|,$ \textbf{then return} $\emptyset$ \textbf{fi:}
			
					\item {\textbf{For} $1 \leq {\tt i} \leq 10$ \textbf{do} \newline
							\hspace*{6pt} \textbf{If} $|\freq({\tt i},S)| \neq |\freq({\tt i},T)|,$ \textbf{then return} $\emptyset$ \textbf{fi:} 
\\
	 						\textbf{od:}} 

%
%
					\item \textbf{If} $\vert \setbar{a\ast b}{\{a,b\} \in S}\vert \neq \vert \setbar{a\ast b}{\{a, b\}\in T} \vert$  \textbf{then return} $\emptyset$ \textbf{fi:} 
				\end{enumerate}
			
			\item 
			\begin{enumerate}
				\item ${\tt sigma} \leftarrow [0,0,0,0,0,0,0].$
				\item  {\tt used2, used31, used32, used4, used5} $\leftarrow$ false.  
				\item {\tt used3Count} $\leftarrow1.$
			\end{enumerate}
				\item \textbf{For} $1 \leq {\tt i} \leq 10$ \textbf{do}
			\begin{enumerate}
				\item {\tt case 1:} $|\text{freq}({\tt i}, S)|=1$ \newline
				\textbf{If} ${\tt sigma}[\text{freq}({\tt i},S)[1]]=0$ {\tt and} $\text{freq}({\tt i},T)[1] \neq 0$ \textbf{then} \hspace*{4pt} ${\tt sigma}[\text{freq}({\tt i},S)[1]]\leftarrow 
				\freq({\tt i},T)[1].$ 
				\newline \hspace*{4pt} \textbf{If} {\tt sigma} has exactly two non-zero elements, ${\tt sigma}[m]$ and ${\tt sigma}[n],$  \hspace*{12pt} \textbf{then} ${\tt sigma}[m\ast n] 
				\leftarrow {\tt sigma}[m]\ast {\tt sigma}[n].$\newline 
				\hspace*{4pt} \textbf{fi:} \newline
				\textbf{fi:}
				
				\item {\tt case 2:} $|\text{freq}({\tt i}, S)|=2$ \textbf{and not} {\tt used5}  \newline
						\hspace*{4pt} ${\tt p} \leftarrow \freq({\tt i}, S)[1] \ast \freq({\tt i}, S)[2], \ {\tt q} \leftarrow \freq({\tt i}, T)[1] \ast \freq({\tt i}, T)[2]$ \textbf{fi:}  \newline 
						\hspace*{4pt} \textbf{If} {\tt sigma}$[{\tt p}]=0$ \textbf{and} ${\tt p} \neq 0$ \textbf{and} ${\tt p}\not\in \freq({\tt i},S)$ \textbf{then} {\tt sigma}$[{\tt p}]\leftarrow{\tt q}.$
						\hspace*{8pt}\textbf{fi:}
						\newline \hspace*{4pt} \textbf{If} {\tt sigma} has exactly two non-zero elements, ${\tt sigma}[m]$ and ${\tt sigma}[n]$ \hspace*{6pt} \hspace*{4pt}\textbf{then} 
						${\tt sigma}[m\ast n] \leftarrow {\tt sigma}[m]\ast {\tt sigma}[n].$\newline
						\hspace*{4pt} \textbf{fi:} \newline 
						\textbf{od:}
						
				\item {\tt case 3:} $|\text{freq}({\tt i}, S)|=3$ \textbf{and not} {\tt used4}  \newline
						\hspace*{4pt} \textbf{If} exactly one of $\freq({\tt i}, S)$ or $\freq({\tt i},T)$ constitutes a line \newline
						\hspace*{8pt} \textbf{then return} $\emptyset.$ \newline
						\hspace*{4pt} \textbf{fi:} \newline
						\hspace*{4pt} \textbf{If} $\freq({\tt i}, S)$ is a generative triplet \textbf{then} \newline 
						\hspace*{8pt} ${\tt p} \leftarrow \freq({\tt i}, S)[1] \ast \freq({\tt i}, S)[2] \ast \freq({\tt i}, S)[3].$ \newline
						\hspace*{8pt} ${\tt q} \leftarrow \freq({\tt i}, T)[1] \ast \freq({\tt i}, T)[2] \ast \freq({\tt i}, T)[3].$
						\newline \hspace*{8pt} \textbf{If} {\tt sigma}$[{\tt p}]=0$ \textbf{and} ${\tt q}\notin \{0\} \cup \text{freq}({\tt i},T)$ \textbf{and} $0\notin \freq({\tt i},T)$ \textbf{then} 
						\newline \hspace*{12pt} ${\tt sigma[p]}\leftarrow {\tt sigma[q]}.$ \newline
						\hspace*{8pt} \textbf{fi:}
						\newline \hspace*{8pt} \textbf{If} {\tt sigma} has exactly two non-zero elements,${\tt sigma}[m]$ and ${\tt sigma}[n],$ \hspace*{8pt} \hspace*{4pt} \textbf{then} 
						${\tt sigma}[m\ast n] \leftarrow {\tt sigma}[m]\ast {\tt sigma}[n].$ \newline
						\hspace*{8pt} \textbf{fi:}
						\newline \hspace*{4pt} \textbf{else} $\text{all}3[\text{used3Count}]\leftarrow \setbar{\theta\in S_\ast}{\theta(L)=L}.$ 
						\newline \hspace*{8pt} {\tt used3}$[${\tt used3Count}$] \leftarrow$ true,  {\tt used3Count}$ \leftarrow ${\tt used3Count}$+1.$ \newline
						\hspace*{4pt} \textbf{fi:}
						
				\item $|\text{freq}(i,S)|=4$ \textbf{and not} {\tt used3}$[1]$  \newline  
				\hspace*{4pt} $J \leftarrow I\setminus \text{freq}(i,S).$ \newline
				\hspace*{4pt} follow {\tt case 3} with $J.$ \newline
				\hspace*{4pt} \textbf{If} $J$ forms a line $L$ \textbf{then} \newline
				\hspace*{8pt} ${\tt all}4 \leftarrow \setbar{\theta\in S_\ast}{\theta(L)=L}.$ \newline
				\hspace*{8pt} {\tt used}$4 \leftarrow$ true.
						
				\item $|\text{freq}(i,S)|=5$ \textbf{and not} {\tt used2} \newline
				\hspace*{4pt} $J \leftarrow I\setminus \text{freq}(i,S).$ \newline
				\hspace*{4pt} follow {\tt case 2} with $J.$ \newline
				\hspace*{4pt} {\tt used5} =true.
				
				\item $|\text{freq}(i,S)|=6$ \newline
				\hspace*{4pt} $ J\leftarrow I\setminus \text{freq}(i,S)$. \newline
				\hspace*{4pt} follow {\tt case 1} with $J.$
			\end{enumerate}
			 
			\item \textbf{If} the information in {\tt sigma} uniquely defines a collineation $\sigma$ \textbf{then return}  $\{\sigma\}$. 
			
			\item $A \leftarrow \{\sigma \in S_\ast(I) \vert \ \sigma \text{ obeys all the information in {\tt sigma}\}}. $
			\item 
			\begin{enumerate}
				\item  \textbf{If} {\tt used3[1]}= true \textbf{then} $A \leftarrow A \cup {\tt all}3[1]$ \textbf{fi:}
				
				\item \textbf{If} {\tt used3[2]}= true \textbf{then} $A \leftarrow A \cap {\tt all}3[2]$ \textbf{fi:}
				
				\item \textbf{If} {\tt used4} \textbf{then} $A \leftarrow A\cap \text{all}4$ \textbf{fi:}
			\end{enumerate} 
			   
			\item \textbf{For each} $\theta\in A$ \textbf{do} \newline
			\hspace*{4pt} \textbf{If} $S$ and $T$ are collinear under $\theta$ \textbf{then} $C\leftarrow C \cup \{\theta\}$ \textbf{fi:}
			\item \textbf{Return} $C.$
		\end{enumerate}
	\end{tcolorbox}
	
	\subsubsection{Generalised Nice Verification}
	We now describe an algorithm which checks whether a given subset of $X_0$ is a generalised nice set. 
	
\smallskip
	
	 \begin{tcolorbox}[sharp corners, boxrule=0.5pt, breakable]
          \hfill \\    
    \textit{Input: A subset $S\subseteq X_0.$} \\
    \textit{Output: True or false indicating whether $S$ is generalised nice.} 
		\begin{enumerate}[label=\arabic*.]
			
			\item {\tt check} $ \leftarrow$ false, {\tt bank} $\leftarrow \emptyset.$
			
			\item \textbf{For} ${\tt i}$ \textbf{from} $1$ \textbf{to} $|S|-1$ \textbf{do} \newline
			 \hspace*{4pt} \textbf{For} ${\tt j}$ \textbf{from} $ {\tt i}+1$ \textbf{to} $|S|$ \textbf{do} \newline
			 \hspace*{8pt} ${\tt c} \leftarrow S[{\tt i}][1] \ast S[{\tt i}][2].$ \newline
			 \hspace*{8pt} \textbf{If} ${\tt c}=S[{\tt j}][1]$ \textbf{then} \newline
			 \hspace*{12pt} ${\tt a} \leftarrow S[{\tt i}][1],\ {\tt b} \leftarrow S[{\tt i}][2], \ {\tt c} \leftarrow S[{\tt j}][2],$ {\tt check} $\leftarrow $ true. \newline
			 \hspace*{8pt} \textbf{Elif} ${\tt c}= S[{\tt j}][2]$ \textbf{then} \newline 
			 \hspace*{12pt} ${\tt a} \leftarrow S[{\tt i}][1],\ {\tt b} \leftarrow S[{\tt i}][2], \ {\tt c} \leftarrow S[{\tt j}][1],$ {\tt check} $\leftarrow$ true. \newline
			 \hspace*{8pt} \textbf{fi:} \newline
			 \hspace*{8pt} \textbf{If} {\tt check} =false \textbf{then} ${\tt c} \leftarrow S[{\tt j}][1] \ast S[{\tt j}][2]$ \newline
			 \hspace*{12pt} \textbf{If} ${\tt c}=S[{\tt i}][2]$ \textbf{then} \newline
			 \hspace*{16pt} ${\tt a} \leftarrow S[{\tt j}][1],\ {\tt b} \leftarrow S[{\tt j}][2], \ {\tt c} \leftarrow S[{\tt i}][1],$ {\tt check} $\leftarrow  $ true. \newline
			 \hspace*{12pt} \textbf{Elif} ${\tt c}=S[{\tt i}][1]$ \textbf{then} \newline
			 \hspace*{16pt} ${\tt a} \leftarrow S[{\tt j}][1],\ {\tt b} \leftarrow S[{\tt j}][2], \ {\tt c} \leftarrow S[{\tt i}][2],$ {\tt check} $\leftarrow$ true.  \newline
			 \hspace*{12pt} \textbf{fi:} \newline
			 \hspace*{8pt} \textbf{fi:} \newline
			 \hspace*{8pt} \textbf{If} {\tt check} = true \textbf{and} $\{a,b,c\} \notin $ {\tt bank} \textbf{then} \newline
			 \hspace*{12pt} \textbf{If} $P_{\{a,b,c\}}\in S$ \textbf{then} {\tt bank} $\leftarrow$  {\tt bank} $\cup \ \{a,b,c\}.$ \newline
			 \hspace*{12pt} \textbf{Else return} false. \newline
			 \hspace*{12pt} \textbf{fi:} \newline
			 \hspace*{8pt} \textbf{fi:} \newline
			 \hspace*{4pt} \textbf{od:} \newline
			 \textbf{od:}
			 \item \textbf{Return} true.
		\end{enumerate}
	\end{tcolorbox}
	

\section{Conclusion} \label{conclusion}

Here we provide a brief summary of the results achieved and the next step in this research project. 
 
Let $T$ denote a   generalised nice set. Up to collineations, we found a total of 245.
More precisely, there are the empty-set and:
\begin{enumerate}
\item[-] 13 with $T$ contained in $X$,     detailed in Corollary~\ref{somegns};    
\item[-] 20 with $T$ having an empty intersection with $X$,   described in Table~\ref{gnsEmptyX};  
\item[-] 7 such that 
$T \cap X$ is not generalised nice;   appearing in Theorem~\ref{GnsListThm};
\item[-] 204 satisfying that $T \cap X \neq \emptyset$ and $T \cap X$ is generalised nice:  
 
\begin{itemize}
\item With $|T - X|=1$: 13 in Table~\ref{gnsLastcase00}, and 27 in Table~\ref{gnsLastcase}; 
\item With $|T - X|=2$: 41 in Table~\ref{gnsLastcase2}, and 7  in Table~\ref{gnsLastcase3};
\item With $|T - X|=3$: 33 in Table~\ref{gnsLastcase4},    8  in Table~\ref{gnsLastcase5},   4  in Table~\ref{gnsLastcase6}, and 7  in Table~\ref{gnsLastcase7}; 
\item With $|T - X|=4$: 13 in Table~\ref{gnsLastcase8}, 12 in Table~\ref{gnsLastcase9}, and 2 in Proposition~\ref{noolvidardel4};
\item With $|T - X|=5$: 8 in Table~\ref{gnsLastcase10}, and 2 in Proposition~\ref{noolvidardel5};
\item With $|T - X|=6$: 3 in Table~\ref{gnsLastcase11}, and 4 in Proposition~\ref{noolvidardel6};
\item With $|T - X|=7$: 7 in Table~\ref{gnsLastcase12};
\item With $|T - X|=8$: 13 in Corollary~\ref{gnsLastcase13}.
\end{itemize}
\end{enumerate}
 
This classification has allowed us in \cite{Paper4}   to find some   new families of Lie algebras obtained by  graded contractions 
 of suitable 
$\mathbb Z_2^3$-gradings on exceptional Lie algebras. 
The role that the notion of a generalised nice set has played in this research is similar to that of a nice set in \cite[\S 5]{Paper1}.

\section*{Acknowledgements} 

The first and third authors were supported by Junta de Andaluc\'{\i}a through project FQM-336. 
The first author was supported by the Spanish Ministerio de Ciencia e Innovaci\'on through projects  
PID2020-118452GB-I00 and PID2023-152673NB-I00, all of them with FEDER funds. The second author also 
received a University of Cape Town Science Faculty PhD Fellowship and the Harry Crossley Research
Fellowship. The third author is supported by an URC fund 459269.

Part of this research was undertaken while the second author visited the Department of Algebra, Geometry and Topology of the University of M\'alaga in the fall of 2022. He thanks the department for their hospitality.    
\smallskip

\subsection*{Data availability}
No data was used for the research described in the article.

\end{document}